\documentclass[11pt]{article}

\usepackage{mathtools}
\usepackage{amsmath}
\usepackage{amsthm}
\usepackage{amssymb}
\usepackage{mathrsfs}
\usepackage{graphicx}
\def\div{{\,\rm div \,}}

\def\g{{\,\rm \gamma \,}}

\def\T{{\cal T}}
\def\Id{{\,\rm Id \,}}

\def\CC{{\,\rm C\,}}
\def\WW{{\,\rm W\,}}
\def\qiq{{\quad\mbox{in}\quad}}
\def\qfq{{\quad\mbox{for}\quad}}
\def\qaq{{\quad\mbox{at}\quad}}
\def\qoq{{\quad\mbox{on}\quad}}

\def\id{{\,\rm id \,}}

\def\sym{{\,\rm sym \,}}

\def\ii{{\,\rm i \,}}

\def\supp{{\,\rm supp \,}}

\def\B{{\,\cal B \,}}

\def\+M{{\,\rm M^{n\times n}_+ \,}}
\def\tr{{\,\rm tr \,}}

\def\qflq{{\quad\mbox{for}\quad}}

\def\qflq{{\quad\mbox{for all}\quad}}

\def\LL{{\,\rm L \,}}

\def\ii{{\,\rm i \,}}

\def\supp{{\,\rm supp \,}}

\def\C{{\cal C }}

\def\lam{\lambda}

\def\ol{\overline}

\def\E{{\cal E}}

\def\X{{\cal X}}

\def\Y{{\cal Y}}
\def\E{{\cal E}}
\def\L{{\cal L}}

\def\A{{\cal A}}

\def\P{{\cal P}}

\def\N{{\cal N}}

\def\ka{{\kappa}}

\newfont{\Blackboard}{msbm10 scaled 1200}

\newfont{\roma}{cmr10 scaled 1200}

\def\D{{\cal D}}
\def\II{{\cal I}}

\def\<{{\langle}}
\def\>{{\rangle}}

\def\ga{\gamma}
\def\Ga{\Gamma}
\def\var{\varphi}

\def\a{\alpha}
\def\b{\beta}

\def\Om{\Omega}

\newtheorem{thm}{{}\hskip\parindent Theorem}[section]
\newtheorem{lem}{{}\hskip\parindent Lemma}[section]
\newtheorem{pro}{{}\hskip\parindent
Proposition}[section]

\newtheorem{dfn}{{}\hskip\parindent
Definition}[section]

\def\pl{\partial}
\def\rw{\rightarrow}

\def\be{\begin{equation}}
\def\ee{\end{equation}}
\def\beq{\arraycolsep=1.5pt\begin{eqnarray}}
\def\eeq{\end{eqnarray}}

\def\P{\cal P}
\def\R{I\!\!R}

\def\n{\mathbf{n}}
\title{Rigidity of Strain Tensors for Surfaces With Mixed Type and Applications To Shell Theory }
\date{}
\author{
Liang-Biao Chen and Peng-Fei Yao\\[0.2cm]
\nonumber
School of Mathematics and Statistics\\\nonumber
Key Laboratory of Complex Systems and Data Science of Ministry of
Education\\\nonumber
Shanxi University, Taiyuan, 030006,
China\\\nonumber
e-mail: pfyao@iss.ac.cn}

\begin{document}
\maketitle
\footnote{This work is supported by the National
Science Foundation of China, grant no. 12071463, and by the special fund for Science and Technology Innovation Teams of Shanxi Province, grant no. 202204051002015.}
\begin{quote}
\begin{small}
{\bf Abstract} \,\,\,This paper investigates the rigidity of strain tensors on surfaces with sign-changing Gaussian curvature (mixed-type surfaces) and applies the results to determine the optimal thickness exponent in the first Korn inequality for thin shells. Using tools from Riemannian geometry and generalized tensor analysis, we derive an infinitesimal rigidity lemma for strain tensors, which establishes \(L^2\) regularity estimates for displacements decomposed into tangential and normal components. Specifically, we show that for a mixed-type shell with a middle surface \(S = S^+ \cup \Gamma_0 \cup S^-\) (where \(S^+\), \(S^-\) have positive and negative curvature, respectively, and \(\Gamma_0\) is a parabolic interface), the optimal constant in Korn’s inequality scales as \(h^{4/3}\), matching the behavior previously established for hyperbolic shells. This result is obtained via a combination of geometric decomposition, Fredholm theory for linear operators, and compactness arguments to handle the curvature transition across \(\Gamma_0\). The findings bridge the gap between elliptic and hyperbolic shell theories, providing a unified framework for understanding rigidity in complex geometries with mixed curvature. The derived estimates are shown to be sharp, offering critical insights for the mechanical design of thin-walled structures with non-uniform curvature.\\
{\bf Keywords}\,\,\,mixed-type surface, shell theory, Korn's inequality, Riemannian geometry, strain tensor rigidity\\
{\bf Mathematics Subject Classifications
(2010)}\,\,\,74K20(primary), 74B20(secondary).
\end{small}
\end{quote}

\section{Introduction and the main results}
\def\theequation{1.\arabic{equation}}
\hskip\parindent The theory of thin shells occupies a pivotal  position in elasticity and geometric analysis, with Korn’s inequalities serving as fundamental tools for establishing the well-posedness of boundary value problems. These inequalities characterize the relationship between deformation gradients and strain tensors, with particular emphasis on how the geometry of the middle surface of a shell  influences its rigidity. A  challenge in this field is to establish the optimal thickness  exponent in Korn’s inequalities for shells with middle surfaces having complex geometries, especially those where the Gaussian curvature changes sign—a scenario that gives rise to ``mixed-type" shells combining elliptic (positive curvature), parabolic (zero curvature), and hyperbolic (negative curvature) regions. Such geometries are ubiquitous in engineering applications, from aerospace structures to biological membranes, yet their analysis remains technically demanding due to the abrupt transitions in curvature-dependent rigidity.

The optimal exponent problem in Korn’s first inequality for thin shells, which relates the \(L^{2}\) norm of displacement gradients to that of symmetric gradients (strains), has been extensively studied for surfaces of uniform curvature: 
\begin{itemize}
\item{For elliptic shells (\(\kappa > 0\)), the optimal thickness exponent is $h$ \cite{Ha2, LeMoPa1, Yao2018} corresponding to the ``pure bending" regime.}
\item{For hyperbolic shells (\(\kappa < 0\)), the exponent \(h^{4/3}\) emerges due to the hyperbolic nature of strain equations \cite{Ha2, Yao2019}.}
\item{For parabolic shells (\(\kappa = 0\)), the scaling \(h^{3/2}\) has been established for cylindrical geometries \cite{GH,GH1,Yao2018}.}
\end{itemize}
However, these results rely heavily on the uniformity of curvature, leaving a critical gap in understanding mixed-type shells, where curvature transitions introduce complex interactions between elliptic and hyperbolic regimes. Such transitions can lead to singular behaviors in strain tensors \cite{CY1}, making it non-trivial to extend classical rigidity estimates. This paper addresses this gap by studying a prototypical mixed-type surface with a parabolic interface \(\Gamma_{0}\) separating positive and negative curvature regions (\(S^{+}\) and \(S^{-}\)).

The primary objectives of this work are twofold:
\begin{itemize}
\item{{\bf Infinitesimal Rigidity Lemma}: We derive an \(L^{2}\) regularity estimate for displacements on mixed-type surfaces, formalized as:
$$\|W\|_{L^{2}(S, T)}^{2} + \|w\|_{[W^{1,2}(S)]'}^{2} \leq C\left(\|U\|_{L^{2}(S, T^{2})}^{2} + \|W\|_{L^{2}(\partial S, T)}^{2}
+\|w\|^2_{\WW^{-1,2}(\Ga_{b_1})}\right),$$
where $W$ and $w$ denote the tangential and normal components of displacement, U is the strain tensor and $\Ga_{b_1}$ is the boundary of $S$ in elliptic part defined as (\ref{boundary}) below. This lemma generalizes earlier results for hyperbolic \cite{Yao2019} and elliptic  surfaces \cite{Ci} by accommodating curvature transitions via generalized tensor analysis.}
\item{{\bf Optimal Thickness Exponent in Korn’s Inequality}: By applying the rigidity lemma to thin shells with middle surface S, we establish that the optimal constant in Korn’s inequality scales as \(h^{4/3}\), matching the hyperbolic regime despite the presence of elliptic regions. This  result highlights the dominant role of hyperbolic curvature in determining global rigidity, even when mixed with elliptic components.}
\end{itemize}

Here the analysis hinges on three key techniques: (1)\,\,{\bf Geometric Decomposition:}  The surface S is partitioned into \(S^{+}\), \(S^{-}\), and \(\Gamma_{0}\), with each region analyzed using curvature-appropriate methods (elliptic theory for \(S^{+}\), hyperbolic theory for \(S^{-}\), and parabolic regularization for \(\Gamma_{0}\)); (2)\,\,{\bf Generalized Tensors:} We employ weak formulations and generalized derivatives to handle low-regularity solutions near curvature transitions, avoiding restrictive smoothness assumptions; (3)\,\,{\bf Fredholm Theory and Compactness Arguments:} Linear operators governing strain equations are shown to have compact resolvents, allowing us to leverage Fredholm alternatives to prove existence and uniqueness of solutions of (\ref{1.33}) and (\ref{3.11n}).

To the best of the authors' knowledge, this paper is the first to establish rigidity results for surfaces with sign-changing curvature (Theorems \ref{t1.1} and \ref{t1.2}). Consequently, it demonstrates that the optimal constant in Korn's inequality scales as \(h^{4/3}\) for certain mixed-type shells (Theorem \ref{t1.3}) as the first time.

We state our main results as follows.

Let $M\subset\R^3$ denote a surface with normal $\mathbf{n}.$ Let $S\subset M$ be defined by
\be S=\{\,\a(t,s)\,|\,(t,s)\in[0,a)\times(-b_0,b_1)\,\},\quad a>0,\quad b_0>0,\quad b>0,\label{Q0}\ee where $\a:$ $[0,a)\times[0,b]\rw M$ is a an embedding map forming a two-parameter family of regular curves parameterized by $t$ and $s,$ such that
\be \Pi(\a_t(t,s),\a_t(t,s))>0,\quad\mbox{for all}\quad (t,s)\in[0,a)\times[-b_0,0],\label{Pi2.801}\ee
and $\a(\cdot,s)$ is a closed curve with  period $a$ for each $s\in[-b_0,b_1],$ where $\Pi=\nabla\n$ denotes the second fundamental form of $M.$ Decompose $S$ as
$$S=S^+\cup\Ga_0\cup S^-,$$ where
$$S^+=\{\,\a(t,s)\,|\,(t,s)\in[0,a)\times(0,b_1)\,\},\quad \Ga_0=\{\,\a(t,0)\,|\,t\in[0,a]\,\},$$
$$ S^-=\{\,\a(t,s)\,|\,(t,s)\in[0,a)\times(-b_0,0)\,\}.$$

 {\bf Curvature assumptions}\,\,\, Let \(\kappa\) denote the Gaussian curvature function on M. Assume that $S$ satisfies the following curvature conditions:
\be\kappa(x)>0\qflq x\in S^+\cup\Ga_{b_1};\label{kappa+}\ee
\be\kappa=0,\quad D\kappa(x)\not=0\qflq x\in\Ga_0;\label{kappa0}\ee
\be\kappa(x)<0\qflq x\in S^-\cup\Ga_{-b_0},\label{kappa-}\ee where $D$ is the covariant differential of the induced metric $g$ and
\beq
\Ga_{b_1}=\{\,\a(t,b_1)\,|\,t\in[0,a)\,\},\quad \Ga_{-b_0}=\{\,\a(t,-b_0)\,|\,t\in[0,a)\,\}.\label{boundary}
\eeq

{\bf Regularity assumption}: Throughout this paper, we assume that the middle surface $S$ is of class $\CC^5.$
This ensures that all geometric quantities (such as the second fundamental form 
$\Pi$ and Gaussian curvature $\kappa$) and the differential operators used in our analysis are well-defined and sufficiently smooth for the proofs in Sections 2 and 3.

We emphasize that the positive-curvature region \(S^+\) and negative-curvature region \(S^-\) can be unrestricted, ensuring that \(S = S^+ \cup \Gamma_0 \cup S^-\) is not merely a narrow strip adjacent to \(\Gamma_0\). 

We mention that the geometry of a standard automobile tire (ignoring tread patterns) is a classic and concrete example with ``mixed-type surface". In particularly, a half-torus precisely possesses the geometric structure (\ref{Q0})-(\ref{kappa-}). Other examples typically require explicit construction (e.g., via gluing, deformation, or tailored parametrizations), but they are physically meaningful and appear in architecture, biology (e.g., cell membranes), and engineering (e.g., complex shell structures). All fall under the broader class of “thin-walled structures with non-uniform curvature” studied in the paper. We hope that our work here can deepen the understanding of the deformation behavior and strength of thin shells with midsurfaces of varying curvature.

Let $y\in \WW^{1,2}(S,\R^3)$ denote a displacement of the middle surface $S.$ We decompose $y$ as
$$y=W+w\mathbf{n},\quad w=\<y,\mathbf{n}\>,$$ where $\<\cdot,\cdot\>$ denotes the dot metric in the Euclidean space $\R^3.$ The (linear) strain tensor of the middle surface associated with the displacement $y$ is defined by $$\Upsilon(y)=\sym DW+w\Pi,$$ where $D$ is the Levi-Civita connection of the induced metric $g$ on $S,$
$$\sym DW=\frac12(DW+DW^T),$$ and
$$\Pi(\a,\b)=\<\nabla_\a\n,\b\>\qflq\a,\,\,\b \in T_xM,\quad x\in S$$ is the second fundamental form of surface $M.$

Let $T^k$ denote the all $k$th-order tensor fields on $S.$ Let $L^2(S,T^k)$ be the space of all $k$th-order tensor fields on $S$ equipped with the norm
$$(P_1,P_2)_{\LL^2(S,T^k)}=\int_S\<P_1,P_2\>dg,$$ where
$$\<P_1,P_2\>=\sum_{i_1,\cdots,i_k=1}^2P_1(e_{i_1},\cdots,e_{i_k})P_2(e_{i_1},\cdots,e_{i_k})\qflq x\in S,$$ and $\{e_1,e_2\}$ is an orthonormal basis of $T_xS.$

The following is one of our main results:  an infinitesimal rigidity lemma for the strain tensor of mixed-type middle surfaces.

\begin{thm}\label{t1.1} Assume that $S$ is of $\CC^5$ and satisfies conditions \eqref{Q0}--\eqref{kappa-}. Then there is a constant $C>0$ such that 
\beq
&&\|W\|_{L^2(S,T)}^2+\|w\|^2_{[\WW^{1,2}(S)]'}\nonumber\\
\le&& C(\|\sym DW+w\Pi\|^2_{L^2(S,T^2)}+\|W\|^2_{\LL^2(\pl S,T)}
+\|w\|^2_{\WW^{-1,2}(\Ga_{b_1})})\label{1.3}\eeq
holds for all $y=W+w\n\in \WW^{1,2}(S,\R^3)$. 
\end{thm}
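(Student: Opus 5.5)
We argue by duality combined with a compactness (contradiction) argument, using the Fredholm structure of the strain operator on the mixed-type surface. First we establish the estimate up to a compact perturbation. Fix $y=W+w\n\in\WW^{1,2}(S,\R^3)$ and set $U=\sym DW+w\Pi$. To bound $\|w\|_{[\WW^{1,2}(S)]'}$ we take an arbitrary $\var\in\WW^{1,2}(S)$ and construct a symmetric tensor field $Q$ solving the adjoint system
\be \div Q=0,\qquad \<Q,\Pi\>=\var \qiq S, \label{adj}\ee
with a boundary condition on $\pl S$ chosen so that boundary terms involve only $W$ on $\pl S$ and $w$ on $\Ga_{b_1}$. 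Integrating by parts gives
$$\int_S w\var\,dg=\int_S\<U,Q\>dg-\int_{\pl S}Q(W,\nu)\,d\Ga ,$$
so the estimate follows once $\|Q\|_{L^2(S,T^2)}+\|Q\|_{L^2(\pl S,T^2)}\le C\|\var\|_{\WW^{1,2}(S)}$. Eliminating the constraint $\<Q,\Pi\>=\var$ reduces \eqref{adj} to a single second-order scalar equation for a stress function. This equation is elliptic in $S^+$, hyperbolic in $S^-$, and degenerates of Tricomi type across $\Ga_0$, where $\kappa=0$ and $D\kappa\ne0$ by \eqref{kappa0}.

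The second step is to solve this equation with the required $L^2$ bounds, treating the three regions separately. On $S^-$ we use the closed curves $\a(\cdot,s)$ and \eqref{Pi2.801}, which make $t$-lines non-characteristic, so that hyperbolic energy estimates can march in $s$ from $\Ga_{-b_0}$ toward $\Ga_0$, as in the hyperbolic case. On $S^+$ we solve an elliptic boundary value problem with data on $\Ga_{b_1}$; this data is what produces the term $\|w\|_{\WW^{-1,2}(\Ga_{b_1})}$. Near $\Ga_0$ we regularize the degenerate operator, for example by adding $\varepsilon\Delta$ or by working in a weighted Tricomi-type space, and use the transversality $D\kappa\neq0$ together with $\C^5$ regularity to obtain uniform bounds and pass to the limit. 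The periodicity in $t$ allows Fourier decomposition in $t$, which reduces the transition analysis to a family of ODE/Tricomi problems in $s$.

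Rather than aiming for a uniform bound directly, we expect to prove only a Fredholm-type estimate in which an extra lower-order term such as $\|W\|_{\WW^{-1,2}}+\|w\|_{\WW^{-2,2}}$ appears on the right. That term is compact relative to the left side. Combining with Korn-type control of $W$ via $DW=U-w\Pi+\text{skew}$ and the Kelvin/Rellich compactness, we then remove the compact term by contradiction. Suppose $y_k$ are normalized so that the left side equals $1$ while the right side tends to $0$. Extracting a weakly convergent subsequence, the limit $y$ satisfies $\sym DW+w\Pi=0$, $W=0$ on $\pl S$, and $w=0$ on $\Ga_{b_1}$. We must then show that such an infinitesimal isometry vanishes. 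To do so, we use hyperbolic uniqueness in $S^-$ from zero Cauchy-type data on $\Ga_{-b_0}$, continue the vanishing across $\Ga_0$ by unique continuation, and finish with the elliptic rigidity of $S^+$ with $W=0$ on its boundary. The compactness of the lower-order term upgrades the weak convergence to strong convergence, which contradicts the normalization.

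The main obstacle will be the solvability, with $L^2$ bounds uniform across $\Ga_0$, of the adjoint mixed-type problem (equivalently, the triviality of infinitesimal isometries crossing the parabolic line). Here the operator changes type, and standard elliptic or hyperbolic estimates both fail. We would first try a Tricomi-style multiplier or energy identity adapted to the vector field $\a_s$, exploiting $\pl_s\kappa\ne0$ on $\Ga_0$. If that fails, we would fall back on the Fredholm alternative for the regularized problem, establishing existence from uniqueness of the homogeneous problem.
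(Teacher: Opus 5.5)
Your overall architecture agrees with the paper's: an estimate modulo a compact or finite-dimensional term, a negative-norm Korn step as in Lemma~\ref{l3.1}, then compactness--uniqueness using elliptic uniqueness on the positive side and hyperbolic uniqueness from non-characteristic data on the negative side. However, the step that carries the whole content of the theorem is left open. Everything rests on solving the adjoint mixed-type problem with $L^2$ bounds uniform across $\Ga_0$, or at least on an a priori estimate with compact remainder across $\Ga_0$. You list candidate techniques but carry none out, and none works as stated:

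\begin{itemize}
\item Fourier decomposition in $t$ decouples the modes only when the coefficients are independent of $t$ (surfaces of revolution), which is not assumed. With $t$-dependent coefficients the modes are coupled, and no family of ODE/Tricomi problems arises.
\item The hyperbolic estimates of \cite{Yao2019} depend on a negative upper bound for $\kappa$. They cannot be marched up to $\Ga_0$, where the two asymptotic directions merge; this is why the paper uses them only on $S\backslash S_{-\varepsilon/2}$.
\item An artificial viscosity term only moves the difficulty to bounds uniform in the regularization parameter.
\item Even the weaker Fredholm-type estimate cannot be obtained by patching local estimates, since the standard elliptic and hyperbolic ones both break down on $\Ga_0$.
\end{itemize}

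The missing ingredient is what the paper imports from \cite{CY1}. It differentiates the strain equation once more (Theorem~\ref{t2.1}) to get the first-order system \eqref{1.26} for $v=\frac12\div QW$ and $V$, where $Dw=\nabla\n W-QV$. On $S_{-\varepsilon}$, which contains $S^+$, $\Ga_0$ and a strip of the hyperbolic side, it tests this system against $\L_0^*$, built with the weight $e^{-\gamma\kappa}$, the field $X$, and the zero-order term $\C$ supported in the elliptic part. The problems \eqref{3x.9} and \eqref{3x.10} are uniquely solvable with $\WW^{m,2}$ bounds straight through $\Ga_0$, and ${\L_0^*}^{-1}\T$ gains a derivative (Theorems~\ref{3t.1} and~\ref{3t.2}). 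The Fredholm alternative for \eqref{1.33} then yields \eqref{3x.25} modulo a projection onto a finite-dimensional kernel. This is glued to the estimate of \cite{Yao2019} on $S\backslash S_{-\varepsilon/2}$ by a cutoff and the trace bound \eqref{3.54}.

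There are also two further problems.

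First, write $\sigma$ for your stress field, to avoid a clash with the paper's $Q$. Your identity $\int_S w\var\,dg=\int_S\<U,\sigma\>dg-\int_{\pl S}\sigma(W,\nu)\,d\Ga$ contains no boundary term in $w$, so $\|w\|_{\WW^{-1,2}(\Ga_{b_1})}$ cannot come out of it. If your adjoint problem were solvable as you require, you would get an estimate strictly stronger than the theorem. That is a sign this solvability is at least as hard as the statement itself. In the paper the term has a precise origin: the boundary condition $\<V,\nu\>|_{\Ga_{b_1}}$ for $\L_0$, which is controlled through \eqref{tangentGa} because $Q\nu$ is tangent to $\Ga_{b_1}$.

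Second, in the uniqueness step you ``continue the vanishing across $\Ga_0$ by unique continuation.'' That needs a unique continuation theorem across a degenerate parabolic curve, which you do not have. You then invoke ``elliptic rigidity'' on $S^+$, which is not uniformly elliptic up to $\Ga_0$. The paper avoids both by using zero Cauchy data at both ends:
\begin{itemize}
\item elliptic unique continuation from $\Ga_{b_1}$ kills the elliptic part;
\item Lemma~\ref{lunique}, applied along the non-characteristic curves $\a(\cdot,s)$ with $s<0$, kills the hyperbolic part;
\item the two regions meet on $\Ga_0$ by continuity.
\end{itemize}
Your data $W=0$ on $\Ga_{b_1}$ would allow the same repair. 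You also apply these uniqueness theorems to a mere weak limit. The paper first proves the limit is $C^2$ (Step~2 of Theorem~\ref{t3.4}, using ${\L_0^*}^{-1}$ and bootstrapping), and your outline omits this step.
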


For the uniformly hyperbolic surface $S,$ estimate $(\ref{1.3})$ is given in \cite{Yao2019}. 
 For a uniformly elliptic surface $S,$ the inequality
$$\|DW\|_{L^2(S,T^2)}^2+\|w\|^2_{\LL^2(S)}\leq C(\|\sym DW+w\Pi\|^2_{L^2(S,T^2)}+\|W\|^2_{\LL^2(\pl S,T)})$$ holds for all $y=W+w\n\in \WW^{1,2}(S,\R^3)$, as shown in \cite{Ci,Yao2018}.

Using Theorem 1.1 and following the proof of \cite[Theorem 1.3]{Yao2019}, we obtain the following result.

\begin{thm}\label{t1.2} Assume that $S$ is of $\CC^5$ and satisfies conditions \eqref{Q0}--\eqref{kappa-}. Then
\be \|w\|^2_{L^2(S)}\leq C(\|Dw\|_{L^2(S)}\|\sym DW+w\Pi\|_{L^2(S)}+\|\sym DW+w\Pi\|^2_{L^2(S)})\label{x1.12}\ee
holds for all $y=W+w\mathbf{n}\in \WW^{1,2}(S,\R^3)$
with $y|_{\Ga_{-b_0}\cup\Ga_{b_1}}=0.$
\end{thm}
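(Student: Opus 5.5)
The plan is to derive (\ref{x1.12}) from Theorem \ref{t1.1} by a duality/interpolation argument, as in \cite[Theorem 1.3]{Yao2019}. Write $U=\sym DW+w\Pi$. Because $y|_{\Ga_{-b_0}\cup\Ga_{b_1}}=0$ and $\pl S=\Ga_{-b_0}\cup\Ga_{b_1}$ (the curves $\a(\cdot,s)$ are closed, so there are no lateral edges), both $W|_{\pl S}=0$ and $w|_{\Ga_{b_1}}=0$. Theorem \ref{t1.1} therefore reduces to
$$\|W\|^2_{L^2(S,T)}+\|w\|^2_{[\WW^{1,2}(S)]'}\le C\|U\|^2_{L^2(S,T^2)}.$$

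Next I interpolate between the dual norm and $\WW^{1,2}_0$. Since $w=0$ on $\pl S$, I have $w\in\WW^{1,2}_0(S)$, and so
$$\|w\|^2_{L^2(S)}=(w,w)\le \|w\|_{[\WW^{1,2}(S)]'}\|w\|_{\WW^{1,2}(S)}.$$
Poincaré's inequality on $S$ with zero boundary values gives $\|w\|_{\WW^{1,2}(S)}\le C\|Dw\|_{L^2(S)}$. Combining this with the reduced form of Theorem \ref{t1.1}, I get $\|w\|^2_{L^2}\le C\|U\|_{L^2}\|Dw\|_{L^2}$, which already yields (\ref{x1.12}); the term $\|U\|^2$ is harmless extra slack. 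If the dual space $[\WW^{1,2}(S)]'$ in the paper is meant as the dual of the full $\WW^{1,2}(S)$ rather than $\WW^{1,2}_0$, the same pairing still works, since $\WW^{1,2}_0\subset\WW^{1,2}$ and so the $[\WW^{1,2}]'$ norm dominates the $\WW^{-1,2}$ norm.

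The main point to check is the duality pairing $(w,w)\le\|w\|_{[\WW^{1,2}]'}\|w\|_{\WW^{1,2}}$. This requires the $L^2$ pairing to be the one used to identify $L^2$ functions with elements of the dual, which is how the dual norm is defined via $\sup_{\|\var\|_{\WW^{1,2}}\le1}\int_S w\var\,dg$. I also need the Poincaré constant to be uniform, which is fine because $S$ is a fixed compact $\CC^5$ annulus and $w$ vanishes on both boundary circles.

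If one prefers to mirror \cite{Yao2019} more closely, the alternative is to keep the term $\|U\|^2$ by using the full norm $\|w\|_{\WW^{1,2}}\le C(\|Dw\|+\|w\|)$ and then absorbing the $\|w\|$ term via Young's inequality. This route is what produces the stated form with both terms on the right-hand side.
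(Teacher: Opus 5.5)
Your proposal is correct and is essentially the intended argument: with $y=0$ on $\pl S=\Ga_{-b_0}\cup\Ga_{b_1}$, Theorem \ref{t1.1} reduces to $\|w\|_{[\WW^{1,2}(S)]'}\le C\|\sym DW+w\Pi\|_{L^2}$, and the pairing $\|w\|^2_{L^2}\le\|w\|_{[\WW^{1,2}(S)]'}\|w\|_{\WW^{1,2}(S)}$ then finishes the proof. Your Poincar\'e step, which is legitimate since $w\in\WW^{1,2}_0(S)$, even shows the extra $\|\sym DW+w\Pi\|^2_{L^2}$ term is unnecessary here; the stated form with that term is what your alternative route (full $\WW^{1,2}$ norm plus Young's inequality) produces.
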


{\bf Application to elasticity of thin shells.}\,\,\,Suppose that $S$ is the middle surface of a shell of thickness $h>0,$ given by
$${\Om_h}=\{\,x+t\mathbf{n}\,|\,x\in S,\,-h/2<t<h/2\,\}.$$

Using the estimate of Theorem \ref{t1.2} and repeating the proof of \cite[Theorem 1.4]{Yao2019}, we derive the following result. For simplicity, the detailed proof is omitted.

\begin{thm}\label{t1.3} Suppose that the $\CC^5$ region $S$ satisfies $(\ref{Q0})$-$(\ref{kappa-}).$
There are $C>0,$ $h_0>0,$ independent of $h>0,$ such that
\be\|\nabla y\|^2_{L^2({\Om_h})}\leq\frac{C}{h^{4/3}}\|\sym\nabla y\|^2_{L^2({\Om_h})}\label{1.13}\ee
for all $h\in(0,h_0)$ and $y=W+w\mathbf{n}\in \WW^{1,2}({\Om_h},\R^3)$ with $y|_{\Ga_{-b_0}\cup\Ga_{b_1}}=0.$ Moreover, the exponent of the thickness $h$ in $(\ref{1.13})$ is optimal.
\end{thm}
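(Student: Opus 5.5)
The plan follows the scheme of \cite[Theorem 1.4]{Yao2019}, with Theorem \ref{t1.2} supplying the key surface estimate. There are two parts: the upper bound (\ref{1.13}) and the optimality of the exponent $4/3$.

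\textbf{Upper bound.} First I reduce to the middle surface. I write $y$ in the normal coordinates $x+t\n$ of $\Om_h$ and take the Kirchhoff--Love type decomposition $y(x+t\n)\approx W(x)+w(x)\n+t\,\theta(x)$. The standard thin-domain Korn estimates (Friesecke--James--M\"uller or Yao's version), applied fiberwise in $t$, give three bounds: $\|\nabla y-\nabla \bar y\|^2_{L^2(\Om_h)}\le C\|\sym\nabla y\|^2_{L^2(\Om_h)}$ for a suitable averaged field $\bar y$; the estimate $h\|\sym DW+w\Pi\|^2_{L^2(S)}\lesssim \|\sym\nabla y\|^2_{L^2(\Om_h)}$ for the averaged displacement; and a bending-type bound $h^3\|D^2 w\|^2+h\,\|DW\|^2$ controlled by $\|\sym\nabla y\|^2_{L^2(\Om_h)}+h\|w\|^2$. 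With this reduction the whole inequality comes down to bounding $h\|w\|^2_{L^2(S)}$ and $h\|DW\|^2_{L^2(S)}$ by $h^{-4/3}\|\sym\nabla y\|^2_{L^2(\Om_h)}$. The boundary condition $y|_{\Ga_{-b_0}\cup\Ga_{b_1}}=0$ transfers to the averaged $W,w$, so Theorem \ref{t1.2} applies. It gives $\|w\|^2\le C(\|Dw\|\,\|\Upsilon\|+\|\Upsilon\|^2)$, where $\Upsilon=\sym DW+w\Pi$.

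Next I interpolate to control $\|Dw\|$. The bound $\|Dw\|^2\le \|w\|\,\|D^2w\|$ holds by integration by parts with zero boundary values. Combining it with $\|D^2w\|\lesssim h^{-3/2}\epsilon$, where $\epsilon^2=\|\sym\nabla y\|^2_{L^2(\Om_h)}$, and with $\|\Upsilon\|\lesssim h^{-1/2}\epsilon$, I get
$$\|w\|^2\lesssim \|w\|^{1/2}h^{-3/4}\epsilon^{1/2}\,h^{-1/2}\epsilon+h^{-1}\epsilon^2 .$$
Solving this gives $\|w\|^2\lesssim h^{-5/3}\epsilon^2$, hence $h\|w\|^2\lesssim h^{-2/3}\epsilon^2$. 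The gradient of $y$ in $\Om_h$ then behaves like $\|\nabla y\|^2\lesssim h(\|DW\|^2+\|Dw\|^2)+\epsilon^2$. Feeding the interpolation bound for $\|Dw\|$ and the first-order Korn inequality for $W$ back in (with $\|DW\|\lesssim\|\Upsilon\|+\|w\|$ after using Theorem \ref{t1.1}) yields $\|\nabla y\|^2\lesssim h^{-4/3}\epsilon^2$. I will keep careful track of exponents here, because this is exactly the computation of \cite{Yao2019}, and it goes through unchanged once Theorem \ref{t1.2} replaces the hyperbolic estimate.

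\textbf{Optimality.} I construct an Ansatz localized in the hyperbolic part $S^-$, away from $\Ga_0$ and $\Ga_{-b_0}$. Near a point of $S^-$ I use asymptotic coordinates, in which $\Pi$ has vanishing diagonal. I take $w=\phi(x)\cos(x_1/h^{1/3})$ oscillating along one asymptotic direction, with $\phi$ a cutoff of support size $O(1)$ in one direction and $O(h^{1/3})$ scale structure. I then solve $\sym DW+w\Pi=O(h^{1/3})$-small for $W$ along the characteristics, which is possible because the equation is hyperbolic there. Finally I set $y=W+w\n-t(\text{rotation correction})$ in the Kirchhoff--Love form. For this choice $\|\nabla y\|^2\sim h\cdot h^{-2/3}$ and $\|\sym\nabla y\|^2\sim h^3 h^{-4/3}+\dots\sim h\cdot h^{2/3}$, so the ratio is $h^{-4/3}$. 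This reproduces Yao's hyperbolic construction verbatim, since it is local to $S^-$.

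The main obstacle I expect is the reduction step: verifying that the averaged $(W,w)$ obeys the exact boundary conditions needed for Theorem \ref{t1.2}, and that the lower-order terms from the variable curvature, including the parabolic line $\Ga_0$, are absorbed. I would handle these terms with the $\CC^5$ regularity and a compactness argument for small $h$, which is where $h_0$ arises.
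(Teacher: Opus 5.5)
Your plan is the one the paper intends: it simply invokes \cite[Theorem 1.4]{Yao2019} with Theorem \ref{t1.2} in place of the hyperbolic surface estimate. Your exponent bookkeeping is right, and the optimality Ansatz localized in $S^-$ is the correct one. The gap is in the reduction step.

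The bending bound $h^3\|D^2w\|^2_{L^2(S)}\lesssim\|\sym\nabla y\|^2_{L^2(\Om_h)}+h\|w\|^2_{L^2(S)}$ is a Kirchhoff--Love heuristic. It is false for the thickness average of a general $y\in\WW^{1,2}(\Om_h,\R^3)$, so your step $\|Dw\|^2\le\|w\|\,\|D^2w\|\lesssim\|w\|\,h^{-3/2}\epsilon$ has no rigorous basis. For a counterexample, take $y(x+t\n(x))=w(x)\n(x)$ with $w\in\WW^{1,2}(S)$, $w=0$ on $\Ga_{-b_0}\cup\Ga_{b_1}$, and $w\notin\WW^{2,2}(S)$. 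Then $\|\sym\nabla y\|^2_{L^2(\Om_h)}\le Ch(\|Dw\|^2_{L^2(S)}+\|w\|^2_{L^2(S)})<\infty$, while $D^2w\notin L^2$. For the same reason, a Kirchhoff--Love field $\bar y$ built from the averages need not satisfy $\|\nabla y-\nabla\bar y\|\lesssim\|\sym\nabla y\|$. So the real obstacle is neither the boundary conditions nor the curvature lower-order terms, and no compactness argument in $h$ repairs it.

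Your two heuristic steps jointly encode $h\|Dw\|^2_{L^2(S)}\lesssim h^{-1/2}\|w\|_{L^2(S)}\|\sym\nabla y\|_{L^2(\Om_h)}$. This is the scaled form of the Korn interpolation (first-and-a-half Korn) inequality
\[
\|\nabla y\|^2_{L^2(\Om_h)}\le C\Big(\frac1h\|\<y,\n\>\|_{L^2(\Om_h)}\|\sym\nabla y\|_{L^2(\Om_h)}+\|y\|^2_{L^2(\Om_h)}+\|\sym\nabla y\|^2_{L^2(\Om_h)}\Big).
\]
It is a genuinely three-dimensional estimate, valid for all $y\in\WW^{1,2}(\Om_h,\R^3)$ vanishing on the lateral boundary. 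It is the ingredient on which the hyperbolic arguments of \cite{Ha2,Yao2019} rest, and it is what you need to invoke.

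With it, apply Theorems \ref{t1.1} and \ref{t1.2} not to the average but to each slice $y_t(x)=y(x+t\n(x))$, $|t|<h/2$. On each slice use $\|\Upsilon(y_t)\|_{L^2(S)}\le C(\|\sym\nabla y\|+|t|\,\|\nabla y\|)$, with norms taken over the parallel surface, and then integrate in $t$. Write $X=\|\nabla y\|_{L^2(\Om_h)}$ and $e=\|\sym\nabla y\|_{L^2(\Om_h)}$. This gives $\|\<y,\n\>\|^2\le C(Xe+hX^2+e^2)$ and $\|W\|^2\le C(e^2+h^2X^2)$, hence
\[
X^2\le C\big(h^{-1}X^{1/2}e^{3/2}+h^{-1/2}Xe+h^{-1}e^2+hX^2\big).
\]
Young's inequality, plus absorbing $hX^2$ into the left side for $h<h_0$, then yields \eqref{1.13}. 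That absorption, not compactness, is where $h_0$ comes from.
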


For the uniform hyperbolic surfaces, estimates \eqref{x1.12} and \eqref{1.13} were initially derived in \cite{Ha2} under the assumption that the middle surface is parameterized by a single principal coordinate system. This restrictive assumption is subsequently removed in \cite{Yao2019}, generalizing the results to more general hyperbolic geometries.

Structure of the rest of the article is: First, we present a subsection 1.1 below to explain the overall strategy of the proofs.
Section 2 introduces generalized tensor spaces and derivative operations, providing the analytical framework for mixed-type surfaces. Section 3 proves the infinitesimal rigidity lemma (Theorem \ref{t1.1}) using operator-theoretic methods. Throughout, we emphasize the interplay between geometric curvature and analytical regularity, showcasing how local curvature properties of middle surfaces dictate global rigidity behavior in thin shells.

\subsection{Overview of the Proof Strategy}

\hskip\parindent Before delving into the technical machinery of generalized tensors and operator theory, we provide a high-level overview of the proof strategy for Theorem 1.1, the infinitesimal rigidity lemma. This roadmap is intended to help the reader understand the purpose of the various components that follow and how they fit together to overcome the central challenge of the mixed-type geometry.

The core difficulty lies in the fact that the character of the partial differential equations governing the strain tensor changes from elliptic (in $S^+$) to hyperbolic (in $S^-$) across the parabolic interface $\Ga_0.$ Classical PDE theory treats these two regimes very differently, and a naive patching of solutions is impossible due to potential singularities at the transition.

Our approach is to avoid working directly with the second-order system for the displacement $y=W+w\n.$ Instead, we follow a strategy pioneered in the study of hyperbolic shells \cite{Yao2017} and adapted in \cite{CY1} for the mixed case. The key idea is to {\bf differentiate the strain equation once more} to obtain a new, first-order system for auxiliary fields that encode the ``rotational" part of the displacement. Specifically, we introduce the fields $v$ and $V$ (defined in (\ref{2.28}) and (\ref{V2.29})) which are, in a generalized sense, related to $\div(QW)$ and a covariant derivative of the normal component $w$ (see (\ref{1.26})).

Theorem \ref{t2.1} shows that the original unknowns $(w,W)$ and the new unknowns $(v, V)$ are linked through two coupled systems of equations: (\ref{1.25}) and (\ref{1.26}). Crucially, the system (\ref{1.26}) for $(v, V)$ has a structure that is amenable to analysis on the hyperbolic side $S^-$.

To exploit this, we focus our attention on the subdomain $S_{-\varepsilon}$, which contains the entire elliptic region $S^+$ and a small neighborhood of the interface $\Ga_0.$ On this domain, we construct a carefully designed linear differential operator $\L_0$ (see (\ref{x3.1})). This operator is built from the principal parts of the equations in (\ref{1.26}) and is chosen so that its adjoint $\L_0^*$ appears naturally when we test the system against suitable functions.

A major achievement of our previous work \cite{CY1} is the establishment of {\bf well-posedness} and {\bf regularity} for the boundary value problems associated with $\L_0$ and $\L_0^*$ (summarized here in Theorem \ref{3t.1} and Theorem \ref{3t.2}). This allows us to use $\L_0^*$ as a powerful tool to probe the solution $(v, V).$ In Section 3, we consider an auxiliary problem (\ref{1.33}) that couples $\L_0^*Z$ with an elliptic equation for a scalar function $z.$ The solvability of this problem is governed by the {\bf Fredholm alternative:} a solution exists if and only if the data is orthogonal to the finite-dimensional kernel of the adjoint problem.

This Fredholm framework is the linchpin of our argument. In Theorem \ref{t3.3}, we use it to derive an a priori estimate for $(v, V)$ in the dual space $W^{2,2}\times W^{2,2}.$ The estimate contains a term involving the projection $\P V$ onto the finite-dimensional kernel $\N.$ Such a term cannot appear in our final rigidity estimate, as it would prevent us from controlling $W$ and $w$ solely by the strain $U.$

To eliminate this unwanted term, we employ a classic {\bf compactness-uniqueness argument}. We assume the desired estimate (\ref{3.26}) is false, which allows us to construct a sequence of counterexamples. By the compactness provided by the Fredholm theory, this sequence converges to a non-trivial limit $(w_0, W_0)$ that satisfies the homogeneous strain equation $\sym DW_0 +w_0\Pi=0$ and specific boundary conditions. The final, and most delicate, step is to prove that this limit must be zero. This is done by showing that the corresponding limit fields $(v_0, V_0)$ vanish identically on both the elliptic and hyperbolic sides by leveraging the unique continuation properties of elliptic and hyperbolic PDEs, respectively. This contradiction proves the estimate (\ref{3.26}).

Finally, in the proof of Theorem \ref{t1.1}, we combine this local estimate on $S_{-\varepsilon}$ with a known rigidity estimate on the purely hyperbolic part $S^c_{-\varepsilon}=S\backslash S_{-\varepsilon}$ (from \cite{Yao2019}) to obtain the global result on the entire mixed-type surface $S.$

In summary, the proof is a multi-stage process: (1) lift the problem to a first-order system for auxiliary fields, (2) use a specially constructed operator $\L_0$ and its well-posedness theory to gain control over these fields on the hyperbolic side, (3) apply the Fredholm alternative to obtain an estimate up to a finite-dimensional kernel, and (4) use a compactness-uniqueness argument, relying on the distinct PDE properties on either side of $\Ga_0,$ to remove the kernel and close the estimate.

\setcounter{equation}{0}
\section{Generalized tensors}
\def\theequation{2.\arabic{equation}}
\hskip\parindent Let $m\geq0$ and $k\geq0$ be  integers. Denote by $\CC_0^m(S,T^k)$  all the $k$th-order tensor fields with  continuous $m$th-order derivatives and compact supports on $\ol{S}.$
Denote by $\D_m(S,T^k)$  the  space $\CC^m_0(S,T^k)$ equipped with the following convergence: a sequence $\{\var_n\}\subset\CC_0^m(S,T^k)$ is said to converge to $\var_0\in\CC_0^m(S,T^k)$ if

(1) there exists a subset
$K\subset\subset S$ such that
$$\supp \var_n\subset K\qflq n\geq1;$$

(2) $\sup_{x\in K}|D_{X_1}\cdots D_{X_j}(\var_n-\var_0)|\rw0$ as $n\rw\infty$ for all $X_1,$ $\cdots,$ $X_j\in \CC^m(S,T)$ where $0\leq j\leq m.$

Let $\D'_m(S,T^k)$ consist of all continuous linear functionals on $\D_m(S,T^k).$  An element in $\D_m'(S,T^k)$ is called {\it a generalized tensor  of  order $k$.}

We define the following generalized derivative operations:
\begin{dfn}
For a given $w\in\D_m'(S),$ the generalized gradient $Dw\in\D_{m+1}'(S,T)$ is defined by
\be Dw(X)=-w(\div X)\qflq X\in \D_{m+1}(S,T).\label{n1.2}\ee

For given $W\in\D_m'(S,T)$ and $y\in\D_m'(S,\R^3),$ the generalized differentials  $DW\in\D_{m+1}'(S,T^2)$ and $\nabla y\in\D_{m+1}'(S,T^2)$ are defined by
\be DW(P)=-W(\div P)\qflq P\in\D_{m+1}(S,T^2),\label{x1.4}\ee and
\be \nabla y(P)=y(-\div P+\<P,\Pi\>\n)\qflq P\in \D_{m+1}(S,T^2),\label{n1.3}\ee respectively.

For given $W\in\D_m'(S,T)$ and $U\in\D_m'(S,T^2),$ the generalized divergences $\div W\in\D'_{m+1}(S)$ and $\div U\in\D_{m+1}'(S,T)$ are defined by
\be \div W(z)=-W(Dz)\qflq z\in\D_{m+1}(S),\label{n1.4}\ee and
\be \div U(Z)=-U(DZ)\qflq Z\in\D_{m+1}(S,T),\label{1.5}\ee respectively.

For given $y\in\D_m'(S,\R^3)$ and $X\in\D_{m+1}(S,T),$ we define $\nabla_Xy\in\D'_{m+1}(S,T)$ by
\be\nabla_Xy(Y)=\nabla y(Y\otimes X)\qflq Y\in\D_{m+1}(S,T).\label{1.6}\ee
\(\nabla_X y \in \mathcal{D}_{m+1}'(S, T)\) is called the generalized derivative of y along direction X.
Similarly, for given $W\in\D_m'(S,T)$ and $X\in\D_{m+1}(S,T),$ $D_XW\in\D_{m+1}'(S,T)$ is defined by
\be D_XW(Y)=DW(Y\otimes X)\qflq Y\in\D_{m+1}(S,T).\label{1.7}\ee
\end{dfn}

In the above definitions, the right hand sides of formulas (\ref{n1.2})-(\ref{1.7}) are linear functionals. For instant,
if $W\in\LL^2(S,T),$  then we interpret $W\in\D_m'(S,T)$ as
$$W(Y)=\int_S\<W,Y\>dg\qflq Y\in\D_m(S,T),$$
and similarly for other terms.

\begin{pro}
Given $y\in\D_m'(S,\R^3)$ and $X\in\D_{m+1}(S,T),$
\be\nabla_Xy(Y)=-y\Big((\div X) Y+\nabla_XY\Big)\qflq Y\in\D_{m+1}(S,T).\label{1.8}\ee
If $y\in\WW^{1,2}(S,\R^3),$ then
\be\nabla_Xy(Y)=\int_S\<\nabla_Xy,Y\>dg\qflq Y\in\D_{m+1}(S,T).\ee
Similarly, if  $W\in \WW^{1,2}(S,T)$ and $X\in\D_{m+1}(S,T),$ then
\be D_XW(Y)=\int_S\<D_XW,Y\>dg\qflq Y\in\D_{m+1}(S,T),\label{n1.10}\ee where $D_XW$ is defined by $(\ref{1.7}).$
\end{pro}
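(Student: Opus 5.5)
The plan is to unwind the definitions (\ref{1.6}) and (\ref{n1.3}) and then use one pointwise tensor identity. By (\ref{1.6}) and (\ref{n1.3}), for $Y\in\D_{m+1}(S,T)$ we have
$$\nabla_Xy(Y)=\nabla y(Y\otimes X)=y\big(-\div(Y\otimes X)+\<Y\otimes X,\Pi\>\n\big).$$
So (\ref{1.8}) reduces to the pointwise identity
$$-\div(Y\otimes X)+\<Y\otimes X,\Pi\>\n=-(\div X)Y-\nabla_XY,$$
where on the right $\nabla_XY$ is the ambient (Euclidean) derivative of the $\R^3$-valued field $Y$.

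To prove this identity, work at a point with a local orthonormal frame $\{e_1,e_2\}$. We use the convention in which the contraction of $Y\otimes X$ with $D$ is taken in the $X$-slot, so that
$$\div(Y\otimes X)=\sum_i (D_{e_i}(Y\otimes X))(\cdot,e_i)=D_XY+(\div X)Y.$$
This convention is the one consistent with (\ref{x1.4}) for smooth $W$, and we will check that it matches the integration-by-parts formula. The Gauss formula gives
$$\nabla_XY=D_XY+\<\nabla_XY,\n\>\n=D_XY-\Pi(X,Y)\n,$$
since $\<Y,\n\>=0$. Also $\<Y\otimes X,\Pi\>=\Pi(Y,X)$, and $\Pi$ is symmetric. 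Combining these three facts, the left side becomes
$$-D_XY-(\div X)Y+\Pi(X,Y)\n=-(\div X)Y-\nabla_XY,$$
which is (\ref{1.8}). All terms are $\CC^m$ with compact support, so the right side of (\ref{1.8}) is a legitimate test field; this uses the $\CC^5$ regularity of $S$ for $\Pi$.

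For $y\in\WW^{1,2}(S,\R^3)$, take $y$ acting by $\int_S\<y,\cdot\>dg$. Then
$$\nabla_Xy(Y)=-\int_S\big[(\div X)\<y,Y\>+\<y,\nabla_XY\>\big]dg.$$
Since $X\<y,Y\>=\<\nabla_Xy,Y\>+\<y,\nabla_XY\>$ and $\int_S \div(fX)\,dg=0$ for the compactly supported $fX$ with $f=\<y,Y\>\in\WW^{1,1}$, we get
$$\int_S\big[(\div X)f+Xf\big]dg=0,$$
and hence $\nabla_Xy(Y)=\int_S\<\nabla_Xy,Y\>dg$. This step is justified by density of smooth functions in $\WW^{1,2}$ together with the compact support of $X$ and $Y$.

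The statement for $W\in\WW^{1,2}(S,T)$ follows the same way. By (\ref{1.7}) and (\ref{x1.4}), and using the divergence identity above,
$$D_XW(Y)=-\int_S\<W,(\div X)Y+D_XY\>dg.$$
Metric compatibility gives $X\<W,Y\>=\<D_XW,Y\>+\<W,D_XY\>$, and the divergence theorem then yields (\ref{n1.10}).

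The only delicate point is fixing the slot convention for $\div$ on $T^2$ so that it matches $Y\otimes X$ in (\ref{1.6}). The remaining steps are routine.
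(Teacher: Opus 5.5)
Your proposal is correct and follows essentially the same route as the paper. The paper likewise computes $\div(Y\otimes X)=D_XY+(\div X)Y$ in a frame and combines it with the Gauss formula $\nabla_XY=D_XY-\Pi(X,Y)\n$ to get the first identity; then, for $y\in\WW^{1,2}(S,\R^3)$ (resp.\ $W\in\WW^{1,2}(S,T)$), it integrates by parts along $X$, and the boundary term $\int_\Ga\<y,Y\>\<X,\nu\>d\Ga$ vanishes by compact support.
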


\begin{proof} Let $x\in S$ be given.  Suppose that $\{E_1,E_2\}$ is a frame normal at $x.$ Then
\beq\div(Y\otimes X)&&=\sum_{ij=1}^2D(Y\otimes X)(E_i,E_j,E_j)E_i=\sum_{ij=1}^2E_j(\<Y,E_i\>\<X,E_j\>)E_i\nonumber\\
&&=\sum_{ij=1}^2(\<D_{E_j}Y,E_i\>\<X,E_j\>+\<Y,E_i\>\<D_{E_j}X,E_j\>)E_i\nonumber\\
&&=D_XY+(\div X)Y\qaq x\qflq X,\,Y\in\D_{m+1}(S,T).\label{1.10}\eeq
Thus (\ref{1.8}) follows directly from definition (\ref{1.6}).

Now, let $y\in\WW^{1,2}(S,\R^3).$ Then $y\in\D_m'(S,\R^3)$ can be identified with a linear functional on  $\D_m(S,T)$ via
$$y(Y)=\int_S\<y,Y\>dg\qflq Y\in\D_m(S,T).$$ Noting that
$$\nabla_XY=D_XY-\<Y\otimes X,\Pi\>\n,$$
it follows from \eqref{1.6}, \eqref{n1.3}, and \eqref{1.10} that
\beq \nabla_Xy(Y)&&=y\Big(-\div(Y\otimes X)+\<Y\otimes X,\Pi\>\n\Big)=\int_S\<y,\,-\div(Y\otimes X)+\<Y\otimes X,\Pi\>\n\>dg\nonumber\\
&&=\int_S\<y,-\nabla_XY-(\div X)Y\>dg=-\int_\Ga\<y,Y\>\<X,\nu\>d\Ga+\int_S\<\nabla_Xy,Y\>dg\nonumber\\
&&=\int_S\<\nabla_Xy,Y\>dg\qflq Y\in\D_{m+1}(S,T).\nonumber\eeq A similar argument establishes \eqref{n1.10}.
\end{proof}

Let $\E_m(S,T^k)$ denote the space $\CC^m(S,T^k)$  equipped  with the following convergence: A sequence $\{\var_n\}\subset\CC^m(S,T^k)$ is said to converge to  $\var_0\in\CC^m(S,T^k)$ if for any  compact set
$K\subset\subset S$ and any  $X_1,$ $\cdots,$ $X_j\in\CC^m(S,T)$ where $0\leq j\leq m,$ there holds
$$\lim_{n\rw0}\sup_{x\in K}|D_{X_1}\cdots D_{X_j}(\var_n-\var_0)|=0.$$
Denote by $\E_m'(S,T^k)$ all the continuous linear functionals on $\E_m(S,T^k).$

We define some multiplier operations between generalized tensors as follows.
\begin{dfn} For given $w\in\D_m'(S)$ and $f\in\E_m(S),$  the product $fw\in\D_m'(S)$ is defined by
\be (fw)(z)=w(fz)\qflq z\in\D_m(S);\label{1.12}\ee
for given $W\in\D_m'(S,T)$ and $f\in\E_m(S),$  the product $fW\in\D_m'(S,T)$ by
\be (fW)(Z)=W(fZ)\qflq Z\in\D_m(S,T);\ee
for given $U\in\D_m'(S,T^2)$ and $f\in\E_m(S),$ the product $fU\in\D_m'(S,T^2)$ by
\be (fU)(P)=U(fP)\qflq P\in\D_m(S,T^2);\ee
for given $W\in\D_m'(S,T)$ and $F\in\E_m(S,T),$ the pairing $(\<W,F\>)\in\D_m'(S)$ by
\be (\<W,F\>)(z)=W(zF)\qflq z\in\D_m(S);\label{n2.15}\ee
for given $w\in\D_m'(S)$ and $X\in\E_m(S,T),$ the product $wX\in\D_m'(S,T)$ by
\be (wX)(Z)=w(\<X,Z\>)\qflq Z\in\D_m(S,T);\label{1.16}\ee
for given $U\in\D_m'(S,T^2)$ and $R\in\E_m(S,T^2),$ $\<U,R\>\in\D_m'(S)$ by
\be (\<U,R\>)(z)=U(zR)\qflq z\in\D_m(S);\label{n1.17}\ee
for given $w\in\D_m'(S)$ and $R\in\E_m(S,T^2),$ the product $wR\in\D_m'(S,T^2)$  by
\be (wR)(P)=w(\<R,P\>)\qflq P\in\D_m(S,T^2);\label{1.17}\ee
for given $U\in\D_m'(S,T^2),$ the trace $\tr U\in\D_m'(S)$  by
\be \tr U(z)=U(z\id)\qflq z\in\D_m(S);\label{1.19}\ee
for given $W\in\D_m'(S,T)$ and $R\in\E_m(S,T^2),$ the product $RW\in\D_m'(S,T)$  by
\be (RW)(Z)=W(R^TZ)\qflq Z\in\D_m(S,T);\label{1.19}\ee
for given $U\in\D_m'(S,T^2)$ and $R_1,$ $R_2\in\E_m(S,T^2),$ the product $R_1UR_2\in\D_m'(S,T^2)$  by
\be (R_1UR_2)(P)=U(R_1^TPR_2^T)\qflq P\in\D_m(S,P).\ee
\end{dfn}

We require a linear operator $Q$ (see \cite{CY, CY1, Yao2017, Yao2019}) defined as follows. For each point \(p \in M\), the Riesz representation theorem guarantees the existence of an isomorphism
\be \label{n2.31}
\<\a,Q\b\>=\det\left(\a,\b,\vec{n}(p)\right)\qflq\a,\,\b\in T_pM.\ee
Let $\{e_1, e_2\}$ be an orthonormal basis of $T_pM$ with
positive orientation, i.e.,
$\det\Big(e_1,e_2,\n(p)\Big)=1.$
Then $Q$ can be explicitly expressed as
\be Q\a=\<\a,e_2\>e_1-\<\a,e_1\>e_2\quad\mbox{for
all}\quad\a\in T_pM.\label{n2.32}\ee
Clearly, $Q$ satisfies
$$ Q^T=-Q,\quad Q^2=-\Id.$$
Operator $Q$ plays a pivotal role in our analysis.

For a given or $U\in\D_m'(S,T),$ we define $QU\in\D_m'(S,T^2)$ by
$$(QU)(X\otimes Y)=U(X\otimes Q^TY),\quad (UQ)(X\otimes Y)=U(QX\otimes Y)\qflq X,\,Y\in\D_m(S,T).$$ Moreover, for given $W\in \D'_m(S,T),$ we define $QW\in\D'_m(S,T)$ by
\be (QW)(F)=-W(QF)\qflq F\in D_m(S,T).\label{n2x.24}\ee

\begin{pro}The following formulas hold:
\be \div(wR)=R^TDw+w\div R\qflq w\in\D_m'(S),\quad R\in\E_m(S,T^2),\label{1.22}\ee
\be \div(RW)=\<R,DW\>+\<\div R,W\>\qflq W\in\D_m'(S,T),\quad R\in\E_m(S,T^2),\label{1.23}\ee
\be UQ+QU^T=(\tr U)Q\qflq U\in\D_m'(S,T^2).\label{1.24}\ee
\end{pro}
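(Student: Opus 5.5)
The plan is to prove each of the three identities by unwinding the definitions: pair both sides with an arbitrary test tensor and reduce the claim to a pointwise identity for smooth test fields, which we then check at a point $x$ in an orthonormal frame (normal at $x$ where derivatives enter). The generalized objects only ever appear through their action on test fields, so the work consists of moving $R$, $Q$ and derivatives onto the test fields using the multiplier definitions (\ref{1.12})--(\ref{1.17}) and the generalized divergences (\ref{n1.4}) and (\ref{1.5}).

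For (\ref{1.22}), fix $Z\in\D_{m+1}(S,T)$. By (\ref{1.5}) and (\ref{1.17}),
$\div(wR)(Z)=-(wR)(DZ)=-w(\<R,DZ\>)$. On the other side, by (\ref{1.19}) and (\ref{n1.2}),
$(R^TDw)(Z)=Dw(RZ)=-w(\div(RZ))$, and by (\ref{1.16}), $(w\div R)(Z)=w(\<\div R,Z\>)$. It therefore suffices to prove the pointwise identity
$$\div(RZ)=\<R,DZ\>+\<\div R,Z\>$$
for smooth $R$ and $Z$. This is the product rule computed in a frame normal at $x$, exactly as in (\ref{1.10}). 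Note that this smooth identity is (\ref{1.23}) itself for smooth $W$.

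For (\ref{1.23}), take $z\in\D_{m+1}(S)$. The left side is $\div(RW)(z)=-(RW)(Dz)=-W(R^TDz)$. The right side is
$$DW(zR)+W(z\div R)=-W(\div(zR))+W(z\div R),$$
so we need $\div(zR)=R^TDz+z\div R$ pointwise. This is again a frame computation, and it is (\ref{1.22}) in the smooth case. The only real obstacle in these two identities is index bookkeeping: we must check that the conventions $\div U=\sum_i DU(\cdot,E_i,E_i)$, i.e. contraction on the last slot, together with $(RW)(Z)=W(R^TZ)$, produce $R^T$ rather than $R$ at the right places. I would fix $\div P=\sum_{ij}D_{E_j}P(E_i,E_j)E_i$, consistent with (\ref{1.10}), and verify both smooth identities with this convention.

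For (\ref{1.24}) no derivatives are involved, so the argument is purely algebraic. Pairing with $P\in\D_m(S,T^2)$ and using the definitions of $UQ$, $QU^T$ and $(\tr U)Q$, each side becomes $U$ applied to a transformed test tensor. The claim then reduces to the pointwise linear-algebra identity
$$PQ^{T}\mbox{-type term}+\mbox{transpose term}=\<P,Q\>\id$$
on $2\times 2$ matrices. Equivalently, by density, we check $AQ+QA^T=(\tr A)Q$ for all real $2\times2$ matrices $A$. By linearity it is enough to test this on the basis $E_{ij}$, using $Q=\begin{pmatrix}0&1\\-1&0\end{pmatrix}$ from (\ref{n2.32}). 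This is a direct computation, for instance $E_{11}Q+QE_{11}=Q$, and the off-diagonal cases give $0$.

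Finally, we confirm that the transposes appearing in the definitions of $UQ$ and $QU^T$ match this matrix identity, which amounts to transposing the identity once.
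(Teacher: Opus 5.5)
Your proposal is correct and is essentially the paper's own proof. For (\ref{1.22}) and (\ref{1.23}) the paper likewise moves everything onto the test field and verifies, in a frame normal at $x$, the smooth identities $\langle R,DZ\rangle=\div(RZ)-\langle\div R,Z\rangle$ and $R^TDz=\div(zR)-z\div R$ (each the smooth form of the other formula, as you observe). For (\ref{1.24}) it checks the algebraic identity on the basis $E_i\otimes E_j$ of a positively oriented frame, which amounts to your matrix-unit check of $AQ+QA^T=(\tr A)Q$.

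On the convention you flag: the paper's computations use $R(\alpha,\beta)=\langle R\alpha,\beta\rangle$, as in $\langle RZ,E_j\rangle=\sum_i R(E_i,E_j)\langle Z,E_i\rangle$ and $((\tr U)Q)(E_1\otimes E_2)=(\tr U)\langle QE_1,E_2\rangle$. Under that convention the defined objects $UQ$ and $QU^T$ are the genuine matrix products, so your identity applies with the correct sign.
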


\begin{proof} Let $x\in S$ be fixed.  Suppose that $\{E_1, E_2\}$ is a frame normal at $x.$ Then
\beq\<R,DZ\>&&=\sum_{ij}R(E_i,E_j)\<D_{E_j}Z,E_i\>\nonumber\\
&&=\sum_{ij}\{E_j[R(E_i,E_j)\<Z,E_i\>]-DR(E_i,E_j,E_j)\<Z,E_i\>\}\nonumber\\
&&=\sum_{j}\{E_j[RZ(E_j)]-\ii_ZDR(E_j,E_j)\}\nonumber\\
&&=\div(RZ)-\<\div R,Z\>\qaq x.\nonumber\eeq
It follows from (\ref{1.5}), (\ref{1.19}), (\ref{1.16}), and (\ref{1.17}) that
\beq\div(wR)(Z)&&=-(wR)(DZ)=-w(\<R,DZ\>)=-w\Big(\div(RZ)\Big)+w(\<\div R,Z\>)\nonumber\\
&&=Dw(RZ)+(w\div R)(Z)=(R^TDw+w\div R)(Z)\nonumber\eeq for all $Z\in\E_{m+1}(S,T).$

Next, for given $z\in\D_{m+1}(S),$ we have at $x$
\beq R^TDz&&=\sum_{ij}E_i(z)R(E_j,E_i)E_j=\sum_{ij}\{E_i[zR(E_j,E_i)]-zDR(E_j,E_i,E_i)\}E_j\nonumber\\
&&=\sum_{j}\{\<\div(zR),E_j\>-z\tr\ii_{E_j}DR\}E_j\nonumber\\
&&=\div(zR)-z\div R.\label{x2.27}\eeq
By (\ref{n1.4}), (\ref{1.19}), (\ref{x2.27}), (\ref{x1.4}), and (\ref{n1.17}),
\beq\div(RW)(z)&&=-(RW)(Dz)=-W(R^TDz)\nonumber\\
&&=DW(zR)+W(z\div R)\nonumber\\
&&=(\<DW,R\>)(z)+(\<W,\div R\>)(z)\qflq z\in D_{m+1}(S).\nonumber\eeq

Finally, we  prove (\ref{1.24}). Let $\{E_1, E_2\}$ be a positively oriented local frame. Then
$$QE_1=-E_2,\quad QE_2=E_1,\quad Q^TE_1=E_2,\quad Q^TE_2=-E_1.$$
Then
$$(UQ+QU^T)(E_1\otimes E_2)=U(QE_1\otimes E_2)+U( Q^TE_2\otimes E_1)=(\tr U)\<QE_1,E_2\>,$$
$$(UQ+QU^T)(E_2\otimes E_1)=(\tr U)\<QE_2,E_1\>,\quad(UQ+QU^T)(E_i\otimes E_i)=(\tr U)\<QE_i,E_i\> $$ for $1\leq i\leq2.$
\end{proof}

Let $S$ be of class $\CC^{m+1}.$ Given $y\in D'_{m}(S,\mathbb{R}^3),$ we define $w\in\D'_m(S),$ $W\in\D'_m(S,T),$ $v\in\D'_{m+1}(S),$ $V\in\D'_{m+1}(S,T),$ and $U\in\D'_{m+1}(S,T^2)$ by
\be w(z)=y(z\n)\qflq z\in\D_{m}(S),\quad W(Z)=y(Z)\qflq Z\in\D_{m}(S,T),\label{2.27}\ee
\be  v(z)=\frac12y(QDz)\qflq z\in\D_{m+1}(S),\label{2.28}\ee
\be V(Z)=y\Big((\div QZ)\mathbf{n}+\nabla\mathbf{n} QZ\Big)\qflq Z\in\D_{m+1}(S,T),\label{V2.29}\ee and
\be U(P)=\nabla y(\sym P)\qflq P\in\D_{m+1}(S,T^2),\label{U2.31}\ee respectively. In the sense of generalized tensors, we have
\be\sym DW+w\nabla\mathbf{n}=U\qiq\D'_{m+1}(S,T^2).\label{n1.25}\ee 

\begin{thm}\label{t2.1}For a given \(y \in \mathcal{D}_m'(S, \mathbb{R}^3)\), the following hold in the sense of generalized tensors:
\be v=\frac12\div QW\qoq  S,\label{n2x.23}\ee
\be\left\{\begin{array}{l}Dw=\nabla\mathbf{n} W-QV\qoq  S,\\
\div W=-(\tr\Pi)w+\tr U\qoq S,\end{array}\right.\label{1.25}\ee
and
\be\left\{\begin{array}{l}Dv=\nabla\mathbf{n} V+Q\div QUQ\qoq S,\\
\div V=-(\tr\Pi)v-\<Q\nabla\mathbf{n},U\>\qoq S.\end{array}\right.\label{1.26}\ee
\end{thm}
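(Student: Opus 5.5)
Every identity in Theorem \ref{t2.1} is an equality of generalized tensors, so I will prove each one by testing both sides against an arbitrary test field in $\D_{m+1}$ or $\D_{m+2}$. I then push all derivatives onto the test field using the definitions (\ref{n1.2})--(\ref{1.7}), the multiplier rules (\ref{1.12})--(\ref{1.19}), and the operator $Q$ acting as in (\ref{n2x.24}). Each claim then reduces to a pointwise identity between smooth vector fields in $\R^3$ built from the test field. The generalized functional $y$ is applied to both sides of that identity, so no regularity of $y$ is needed. All pointwise identities will be checked at a fixed $x$ in a positively oriented frame $\{E_1,E_2\}$ normal at $x$. There I use $QE_1=-E_2$, $QE_2=E_1$, that $D$ commutes with $Q$ (since $DQ=0$, as $Q$ is rotation by $\pi/2$), and the Gauss--Weingarten relation $\nabla_XY=D_XY-\Pi(X,Y)\n$ together with $\nabla_X\n=\nabla\n X$.

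For (\ref{n2x.23}), I test against $z$. By (\ref{n1.4}) and (\ref{n2x.24}), $\frac12\div QW(z)=-\frac12(QW)(Dz)=\frac12W(QDz)=\frac12y(QDz)$, which is $v(z)$ by (\ref{2.28}) and (\ref{2.27}). This step is immediate.

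For the first line of (\ref{1.25}), I test against $Z$. The left side is $-w(\div Z)=-y((\div Z)\n)$. On the right, $(\nabla\n W)(Z)=y(\nabla\n^TZ)=y(\nabla\n Z)$ by symmetry of $\Pi$. Also $-(QV)(Z)=V(QZ)=y((\div Q^2Z)\n+\nabla\n Q^2Z)=y(-(\div Z)\n-\nabla\n Z)$, and adding the two terms gives the left side. For the second line, $\div W(z)=-y(Dz)$. I expand $\tr U(z)=\nabla y(z\,\id)=y(-\div(z\,\id)+z(\tr\Pi)\n)=y(-Dz+z\tr\Pi\,\n)$, while $-(\tr\Pi)w(z)=-y(z\tr\Pi\,\n)$; the sum is again $-y(Dz)$.

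For (\ref{1.26}), I test against $Z$, respectively $z$, with one more derivative. The left side $Dv(Z)=-\frac12y(QD\div Z)$ must be matched with $V(\nabla\n Z)$ plus $(Q\div QUQ)(Z)$. By the definitions, the second of these is $-U(D(\cdots))=-\nabla y(\sym D(Q^T\cdots))$, and expanding it via (\ref{n1.3}) gives $y$ applied to a second-order expression $-\div\sym D(\tilde Z)+\<\sym D\tilde Z,\Pi\>\n$ with $\tilde Z=\pm Q Q^TZ$-type fields. The tangential parts must then cancel using the Ricci identity $\div DX-D\div X=\kappa X$ (Bochner-type commutation on a surface), combined with $\div(Q DX)$-type identities from (\ref{1.24}). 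The normal parts must cancel using the Codazzi equation ($D\Pi$ is totally symmetric) and $\<Q,\Pi\>=0$. The second line is handled the same way: $\div V(z)=-V(Dz)=-y((\div QDz)\n+\nabla\n QDz)$, where $\div QDz=0$, so only $-y(\nabla\n QDz)$ remains. This must equal $-\frac12(\tr\Pi)y(QDz)$ minus $\nabla y(\sym(zQ\nabla\n))$, and I will expand the latter by (\ref{n1.3}). The algebraic identity $\nabla\n Q+Q\nabla\n=(\tr\Pi)Q$, which is (\ref{1.24}) with $U=\Pi$, together with Codazzi should close it.

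I expect the first line of (\ref{1.26}) to be the main obstacle. It requires carefully tracking where each $Q$ and transpose lands under the dual definitions, and the cancellation of curvature terms ($\kappa$ from Ricci against $\det\Pi$ from Gauss) is where sign errors can occur. I would verify it first for smooth $y$, where it reads $Dv=\nabla\n V+Q\div QUQ$ with $V=Q^T(\nabla\n W-Dw)$ from (\ref{1.25}). Since both sides are continuous in $y$ and smooth fields are dense, the generalized identity then follows by duality.
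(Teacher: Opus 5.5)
Your treatment of (\ref{n2x.23}), of both lines of (\ref{1.25}), and of the second line of (\ref{1.26}) is essentially the paper's. The paper routes the last one through $W$, $DW$ and $\nabla y=DW+w\Pi$, but it is the same unwinding onto $y$.

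The real difference is the first line of (\ref{1.26}). The paper never writes down the second-order test field. It isolates the Ricci commutation once, in the test-field identity (\ref{1.28}), and deduces the generalized identity $\div\sym DW=D\div W+\kappa W-QDv$. It then evaluates $\nabla\n V(Z)$ by substituting $QV=\nabla\n W-Dw$ from (\ref{1.25}). Along the way it uses $\nabla\n Q\nabla\n=\kappa Q$, $\nabla\n-(\tr\Pi)\id=Q\nabla\n Q$, $\div\nabla\n=D(\tr\Pi)$, the product rule (\ref{1.22}), $\sym DW+w\Pi=U$ and $U-(\tr U)\id=QUQ$.

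You instead pull everything back to one identity between test fields. The geometric input is the same: Ricci commutation, Gauss, Codazzi, and antisymmetry of $Q$. Your route is self-contained, needs neither (\ref{1.22}) nor the previously proved (\ref{1.25}), and makes it obvious that no regularity of $y$ is involved. The paper's route is modular and exhibits (\ref{1.26}) as the differentiated form of (\ref{1.25}). Your closing density argument is superfluous: equality of the test fields already gives the identity for every $y\in\D'_m(S,\R^3)$.

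All the content of your approach sits in the step you only sketched, and as written that sketch needs the following corrections.

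\textbf{The test field in the first line.} By (\ref{n2x.24}), (\ref{1.5}) and the rule $(R_1UR_2)(P)=U(R_1^TPR_2^T)$, one gets $(Q\div QUQ)(Z)=(QUQ)(D(QZ))=-U(DZ\,Q)$, where $DZ\,Q$ is the map $X\mapsto D_{QX}Z$. Equivalently this is $U(D(QZ)-(\div QZ)\id)$. It is not the symmetrized gradient of a single field, and the trace correction is essential.

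Expanding with (\ref{n1.3}), the tangential part comes from $\div\sym(DZ\,Q)$. One has $\div(DZ\,Q)=D^2_{E_2,E_1}Z-D^2_{E_1,E_2}Z=-\kappa QZ$. Also $\div(DZ\,Q)^T=-Q\div(DZ)^T=-Q(D\div Z+\kappa Z)$ by (\ref{divsymDZ}). Hence $\div\sym(DZ\,Q)=-\kappa QZ-\frac12QD\div Z$. This cancels the term $\nabla\n Q\nabla\n Z=\kappa QZ$ coming from $V(\nabla\n Z)$ and leaves $-\frac12QD\div Z$, which is the test field of $Dv$.

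The normal part reduces to $\div(Q\nabla\n Z)=\<DZ\,Q,\Pi\>$. Here Codazzi turns $\sum_j\<Q(D_{E_j}\nabla\n)Z,E_j\>$ into $\tr(Q\,D_Z\nabla\n)=0$.

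\textbf{The Ricci identity.} In the paper's conventions the identity you quote holds for the transpose: $\div(DZ)^T=D\div Z+\kappa Z$. By contrast, $\div DZ$ is the rough Laplacian and equals $D\div Z+\kappa Z-QD\div QZ$ by (\ref{1.28}).

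\textbf{The $(\tr\Pi)v$ term in the second line.} Tested on $z$, $-(\tr\Pi)v$ is $-v(z\tr\Pi)=-\frac12y(QD(z\tr\Pi))$ by (\ref{1.12}), not $-\frac12(\tr\Pi)y(QDz)$. The extra piece $-\frac12y(z\,QD(\tr\Pi))$ is exactly what balances the $z\,QD(\tr\Pi)$ contribution of $\div\sym(zQ\nabla\n)$. With your literal formula the identity would be off by $\frac12y(z\,QD(\tr\Pi))$.
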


\begin{proof}

By (\ref{n2x.24}) and (\ref{n1.4}), we have
$$ 2v(z)=(W+w\nabla\mathbf{n})(QDz)=-(QW)(Dz)=(\div QW)(z)\qflq z\in D_{m+1}(S).$$

For \(Z \in \mathcal{D}_{m+1}(S, T)\), applying \eqref{n1.2} and \eqref{1.19}, we get
\beq(Dw-\nabla\mathbf{n} W)(Z)&&=-w(\div Z)-W(\nabla^T\n Z)=-y\Big((\div Z)\nabla\mathbf{n}+\nabla\mathbf{n} Z\Big)\nonumber\\
&&=-V(Q^TZ)=-(QV)(Z).\nonumber\eeq

By (\ref{n1.3}), (\ref{x1.4}), and (\ref{1.17}), for any \(P \in \mathcal{D}_{m+1}(S, T^2)\),
$$\nabla y(P)=y(-\div P+\<P,\Pi\>\n)=-W(\div P)+w(\<P,\Pi\>)=(DW+w\Pi)(P),$$
which implies
\be\nabla y=DW+w\Pi\qiq \D_m'(S,T^2).\label{1.27}\ee Since
$\div(z\id)=Dz,$  it follows from \eqref{1.17}, \eqref{1.12}, and \eqref{1.19} that
\beq\div W(z)&&=-W(Dz)=-W\Big(\div(z\id)\Big)=DW(z\id)\nonumber\\
&&=\nabla y(z\id)-(w\Pi)(z\id)=\nabla y\Big(\sym(z\id)\Big)-w\Big((\tr\Pi)z\Big)\nonumber\\
&&=U(z\id)-(\tr\Pi)w(z)=(\tr U)(z)-(\tr \Pi)w(z)\qflq z\in\D_{m+1}(S),\nonumber\eeq proving the second equation in (\ref{1.25}).

To prove \eqref{1.26}, let \(x \in S\) and let \(\{ E_1, E_2 \}\) be a positively oriented orthonormal frame at $x,$ with \(E_1 = QE_2\). By the Ricci identity for $Z,$ $X_1,$ $X_2,$ and $X_3\in\D_{m+1}(S,T),$
$$D^2Z(X_1,X_2,X_3)=D^2Z(X_1,X_3,X_2)+R(X_2,X_3,Z,X_1),$$ 
we compute
\beq \<\div\sym DZ,E_2\>&&=E_1(\sym DZ)(E_2,E_1)+E_2(\sym DZ)(E_2,E_2)\nonumber\\
&&=\frac12E_1[DZ(E_2,E_1)+DZ(E_1,E_2)]+E_2[DZ(E_2,E_2)]\nonumber\\
&&=\frac12[D^2Z(E_2,E_1,E_1)+D^2Z(E_1,E_2,E_1)]+E_2(\div Z)-D^2Z(E_1,E_1,E_2)\nonumber\\
&&=E_2(\div Z)+\frac12[D^2Z(E_2,E_1,E_1)-D^2Z(E_1,E_2,E_1)]+\kappa\<Z,E_2\>\nonumber\\
&&=E_2(\div Z)+\kappa\<Z,E_2\>+\frac12E_1(\<D_{E_1}Z, E_2\>-\<D_{E_2}Z,E_1\>)\nonumber\\
&&=E_2(\div Z)+\kappa\<Z,E_2\>+\frac12\<D\div QZ,E_1\>\nonumber\\
&&=\<D\div Z+\kappa Z-\frac12QD\div QZ,\,\,E_2\>\qflq Z\in\D_{m+2}(S,T).\nonumber\eeq
A similar computation yields
$$\<\div\sym DZ,E_1\>=\<D\div Z+\kappa Z-\frac12QD\div QZ,\,\,E_1\>\qaq x.$$ Thus,
\be\div\sym DZ=D\div Z+\kappa Z-\frac12QD\div QZ\qflq Z\in\D_{m+2}(S,T).\label{1.28}\ee

By (\ref{1.5}) and (\ref{1.28})
\beq(\div \sym DW)(Z)&&=-DW(\sym DZ)=W(\div\sym DZ)\nonumber\\
&&=W(D\div Z+\kappa Z-\frac12QD\div QZ)\nonumber\\
&&=(D\div W+\kappa W)(Z)-\frac12y(QD\div QZ)\nonumber\\
&&=(D\div W+\kappa W)(Z)-v(\div QZ)\nonumber\\
&&=(D\div W+\kappa W-QDv)(Z)\qflq Z\in\D_{m+2}(S,T).\nonumber\eeq
Using the relations
$$\nabla\mathbf{n} Q\nabla\mathbf{n}=\ka Q,\quad \nabla\mathbf{n}-(\tr \Pi) \id=Q\nabla\mathbf{n} Q,\quad \div\nabla\mathbf{n}=D(\tr\Pi),$$ and applying \eqref{1.25}, \eqref{1.22}, (\ref{n1.25}), we derive 
\beq \nabla\mathbf{n} V(Z)&&=V(\nabla\mathbf{n} Z)=QV(Q\nabla\mathbf{n} Z)=(\nabla\mathbf{n} W-Dw)(Q\nabla\mathbf{n} Z)\nonumber\\
&&=W(\nabla\mathbf{n} Q\nabla\mathbf{n} Z)-Dw\Big((\tr\Pi)QZ-\nabla\mathbf{n} QZ\Big)\nonumber\\
&&=[\kappa W+\nabla\mathbf{n} Dw-(\tr\Pi)Dw](QZ)\nonumber\\
&&=[\kappa W+\div(w\nabla\mathbf{n})-w\div\nabla\mathbf{n}-(\tr\Pi)Dw](QZ)\nonumber\\
&&=[\kappa W+\div U-\div\sym DW-D((\tr\Pi)w)](QZ)\nonumber\\
&&=[\div U-D\div W+QDv-D((\tr\Pi)w)](QZ)\nonumber\\
&&=[Dv-Q\div U+QD(\tr U)](Z)\nonumber\\
&&=[Dv-Q\div\Big(U-(\tr U)\id\Big)](Z)\nonumber\\
&&=[Dv-Q\div QUQ](Z)\qflq Z\in\D_m(S,T),\nonumber\eeq where we used the identity \(U - (\operatorname{tr}U)\mathrm{id} = QUQ\), proving the first equation in \eqref{1.26}.

Finally, noting that
$$\<\Pi,Q\nabla\mathbf{n}\>=\<Q,\Pi\>=0,\quad\div QDz=0,\quad \div(zQ\nabla\mathbf{n})=-\nabla\mathbf{n} QDz,$$
$$\div\Big(z(\tr\Pi)Q\Big)=-QD(z\tr\Pi)\qflq z\in\D_{m+1}(S),$$ we compute using \eqref{1.27}, \eqref{n1.3}, (\ref{n1.17}), and \eqref{1.23}:
\beq \div V(z)&&=-V(Dz)=-y\Big((\div QDz)\nabla\mathbf{n}+\nabla\mathbf{n} QDz\Big)=-W(\nabla\mathbf{n} QDz)\nonumber\\
&&=W\Big(\div(zQ\nabla\mathbf{n})\Big)=-DW(zQ\nabla\mathbf{n})=-\nabla y(zQ\nabla\mathbf{n})+w\Pi(zQ\nabla\mathbf{n})\nonumber\\
&&=-\nabla y(z\sym Q\nabla\mathbf{n})+\frac12\nabla y(z\nabla\mathbf{n} Q^T-zQ\nabla\mathbf{n})+w(z\<\Pi,Q\nabla\mathbf{n}\>)\nonumber\\
&&=-U(zQ\nabla\mathbf{n})+\frac12\nabla y(z\nabla\mathbf{n} Q^T-zQ\nabla\mathbf{n})\nonumber\\
&&=-U(zQ\nabla\mathbf{n})-\frac12\nabla y\Big(z(\tr\Pi)Q\Big)\nonumber\\
&&=-(\<U,Q\nabla\mathbf{n}\>)(z)-\frac12y\Big(-\div(z(\tr\Pi)Q)+z(\tr\Pi)\<Q,\Pi\>\n\Big)\nonumber\\
&&=-(\<U,Q\nabla\mathbf{n}\>)(z)-v(z\tr\Pi)=-(\<U,Q\nabla\mathbf{n}\>)(z)-((\tr\Pi)v)(z)\nonumber\eeq for $z\in\D_{m+1}(Z),$ where we used \(Q\nabla\vec{n} = (Q\nabla\vec{n})^T + (\operatorname{tr}\Pi)Q\). This confirms the second equation in \eqref{1.26}.
\end{proof}

\setcounter{equation}{0}
\section{Proof of Theorem \ref{t1.1}}
\def\theequation{3.\arabic{equation}}
\hskip\parindent 
Let $x\in M$ be fixed and $\{E_1, E_2\}$ be a frame field normal at $x$ with positive orientation. Then for any $Z\in C^2(M,T)$, there holds at $x$ 
\beq
\<\div (DZ)^T,E_1\>
=&&D[(DZ)^T](E_1,E_1,E_1)
+D[(DZ)^T](E_1,E_2,E_2)
\nonumber\\
=&&E_1\<D_{E_1}Z,E_1\>
+E_2\<D_{E_1}Z,E_2\>
\nonumber\\
=&&\<E_1,D\div Z\>+\<-D_{E_1}D_{E_2}Z
+D_{E_2}D_{E_1}Z,E_2\>
\nonumber\\
=&&\<E_1,D\div Z+\ka Z\>.
\nonumber
\eeq
Similar formula also holds for $E_2$ at $x\in M$. By the arbitrariness of $x\in M$, we have 
\beq
\div (DZ)^T=D\div Z+\ka Z\quad\mbox{holds on}\quad M.
\label{divsymDZ}
\eeq
 Let ${S}\subset M$ be  a bounded Lipschitz region with boundary $\Ga.$ Consider operator $\B:$ $\LL^2({S},T)\rw\LL^2({S},T)$ defined by
$$\B Z=2\div \sym DZ-D\div Z\quad\mbox{with domain}\quad D(\B)=\WW^{2,2}({S},T)\cap\WW^{1,2}_0({S},T).$$
By (\ref{divsymDZ}) and \cite[Theorem 1.26]{Yao2011}, we have 
$$\B Z=\div DZ+\ka Z
=-{\bf\Delta}Z+2\ka Z,$$ 
where ${\bf\Delta}$ is the Hodge-Laplacian. By \cite{Ta}, $\B$ is a self-adjoint operator with  compact resolvent. We then have the direct sum
$$\LL^2({S},T)=\R(\B)\oplus \N(\B),$$ where $\R(\B)$ and $\N(\B)$  denote the range and null space of \(\mathcal{B}\), respectively. Moreover, $\R(\B)\ \bot\ \N(\B)$ in $\LL^2({S},T)$ and $\dim\N<\infty.$ 

 Let $\X(S)$ be the closure of $C^1(S,T)$ under norm $\|\cdot\|_{\X(S)}$ defined by 
\[
\|W\|^2_{\X(S)}
=\|W\|^2_{\LL^2({S},T)}+\|\sym DW\|^2_{[\WW^{1,2}({S},T^2)]'}
+\|W\|^2_{\LL^2(\Ga,T)}.
\]
Then $\WW^{1,2}(S,T)\subset\X(S)$. 
\begin{lem}\label{l3.1} There exists constant $C>0$ such that
\be\|W\|^2_{\LL^2({S},T)}\leq C(\|\sym DW\|^2_{[\WW^{1,2}({S},T^2)]'}+\|W\|^2_{\LL^2(\Ga,T)})\qflq W\in \X(S).\label{3.29}\ee
\end{lem}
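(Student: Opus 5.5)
Since $\WW^{1,2}(S,T)$ (indeed $C^1(S,T)$) is dense in $\X(S)$ and both sides of (\ref{3.29}) are continuous in $\|\cdot\|_{\X(S)}$, it suffices to prove (\ref{3.29}) for $W\in\WW^{1,2}(S,T)$. The plan is a duality argument built on the operator $\B$ just introduced, combined with a compactness step to dispose of the finite-dimensional kernel $\N(\B)$.

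\textbf{Duality step.} Fix $W\in\WW^{1,2}(S,T)$ and split $W=W_1+W_2$ with $W_1\in\R(\B)$ and $W_2\in\N(\B)$. Choose $Z\in D(\B)$ with $\B Z=W_1$ and $\|Z\|_{\WW^{2,2}(S,T)}\le C\|W_1\|_{\LL^2(S,T)}$; this follows from elliptic regularity for $-{\bf\Delta}+2\ka$ with Dirichlet data (we may take $S$ smooth enough here, as the sets used later are). Then
$$\|W_1\|^2_{\LL^2}=(W,\B Z)=\int_S\<W,2\div\sym DZ-D\div Z\>dg .$$
Integrate by parts using $Z|_\Ga=0$:
\begin{itemize}
\item the first term equals $-2\int_S\<\sym DW,DZ\>dg$ plus the boundary term $2\int_\Ga\<W,(\sym DZ)\nu\>d\Ga$;
\item the second term equals $\int_S\div W\div Z\,dg-\int_\Ga\<W,\nu\>\div Z\,d\Ga$, and $\div W=\tr\sym DW$.
\end{itemize}
The interior terms are bounded by $C\|\sym DW\|_{[\WW^{1,2}]'}\|Z\|_{\WW^{2,2}}$, since $DZ$ and $(\div Z)\id$ lie in $\WW^{1,2}(S,T^2)$. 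The boundary terms are bounded by $C\|W\|_{\LL^2(\Ga)}\|DZ\|_{\LL^2(\Ga)}\le C\|W\|_{\LL^2(\Ga)}\|Z\|_{\WW^{2,2}}$ by the trace theorem. Hence
$$\|W_1\|_{\LL^2}\le C\big(\|\sym DW\|_{[\WW^{1,2}]'}+\|W\|_{\LL^2(\Ga)}\big).$$

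\textbf{Kernel step.} It remains to control $W_2=\P W$, the projection onto $\N(\B)$. I would argue by contradiction. Suppose there are $W_n$ with $\|W_n\|_{\LL^2}=1$ while the right-hand side of (\ref{3.29}) tends to $0$.
\begin{itemize}
\item By the duality step, $W_{1,n}\to0$. Since $\dim\N(\B)<\infty$, a subsequence gives $W_{2,n}\to W_0$ in every norm on $\N(\B)$, with $\|W_0\|_{\LL^2}=1$.
\item Thus $W_n\to W_0$ in $\LL^2$, which forces $\sym DW_0=0$ in the distributional sense. Elements of $\N(\B)$ are smooth up to the boundary (they lie in $D(\B)$ and are elliptic-regular), so $W_0$ is a $\WW^{2,2}$ Killing field on $S$ with $W_0|_\Ga=0$.
\end{itemize}

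\textbf{Main obstacle.} The delicate point is concluding $W_0=0$. A Killing field is determined by its value and its skew derivative at one point. Since $W_0=0$ on $\Ga$, differentiating tangentially along $\Ga$ together with $\sym DW_0=0$ gives $DW_0=0$ on $\Ga$. So $W_0$ vanishes to first order at a boundary point, hence $W_0\equiv0$ on the connected set $S$. This contradicts $\|W_0\|_{\LL^2}=1$ and proves (\ref{3.29}).

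A secondary technical point is transferring the convergence of the boundary trace: one only needs $W_0|_\Ga=0$, and this holds directly because $W_0\in D(\B)\subset\WW^{1,2}_0$. So the essential work lies in the rigidity of Killing fields vanishing on the boundary, plus the $\WW^{2,2}$ regularity of $\B^{-1}$ on $\R(\B)$.
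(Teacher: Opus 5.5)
Your proof is correct. It shares the paper's overall architecture: duality against $\B$ on $\R(\B)$, followed by compactness to dispose of the finite-dimensional $\N(\B)$. The kernel step, however, is organized differently.

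The paper runs the duality estimate only for $W\in\R(\B)$. So for a general $W=\sum_i\a_i\Phi_i+W'$ it must prove the auxiliary inequality \eqref{3.31}, which controls both the kernel coefficients $\a_i$ and $\|\sym DW'\|_{[\WW^{1,2}(S,T^2)]'}$. Its contradiction argument builds a Cauchy sequence in $\X(S)$ and then uses Korn's inequality, elliptic regularity and the closedness of $\R(\B)$ to place the limit in $\N(\B)\cap\R(\B)$.

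Your observation that $(W_2,\B Z)_{\LL^2}=0$ lets you test the full $W$ against $\B Z$, so the range component $W_1$ is bounded directly by the data of $W$. The contradiction then involves only a normalized sequence whose range part tends to zero and whose kernel part converges in a finite-dimensional space. This is shorter and needs none of the paper's machinery.

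Both routes end at the same fact: no nonzero Killing field lies in $\WW^{1,2}_0$ of a connected $S$. The paper uses this tacitly when it passes from $W_0=0$ to $\a_i=0$; you state and justify it.

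Two small points:
\begin{enumerate}
\item To obtain $\|Z\|_{\WW^{2,2}(S,T)}\le C\|W_1\|_{\LL^2(S,T)}$ you should pick $Z\in D(\B)\cap\R(\B)$. Elliptic regularity by itself only bounds $\|Z\|_{\WW^{2,2}}$ by $\|\B Z\|+\|Z\|$.
\item Your one-jet argument at a boundary point needs $W_0\in C^1(\ol S)$, which elliptic regularity supplies when $\Ga$ is smooth. Alternatively, extend $W_0$ by zero to a slightly larger connected domain in $M$. There it is still a $\WW^{1,2}$ Killing field and vanishes on an open set, so this avoids any boundary regularity.
\end{enumerate}
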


\begin{proof}

{\bf Step 1.}\,\,\,We claim that (\ref{3.29}) holds for all $W\in\R(\B)\cap\X(S)$. 

 Let $W\in\R(\B)\cap C^1(S,T)$. There exists $Z\in D(\B)\cap\R(\B)$ such that 
$$\B Z=W,\quad Z|_\Ga=0$$ with the estimate
$$\|Z\|_{\WW^{2,2}({S},T)}\leq C\|W\|_{\LL^2({S},T)}.$$
By \cite[Lemma 2.3]{CY}, we obtain 
\beq \|W\|^2_{\LL^2(S,T)}
&&=\int_{S}\<W,\div DZ+\div (DZ)^T-D\div Z\>dg
\nonumber\\
&&=
\int_{\Ga}\big(\<D_WZ,\nu\>+\<D_{\nu}Z,W\>
-\<W,\nu\>\div Z\big)d\Ga\nonumber\\
&&\quad
+\int_{S}\big(-\<DW+(DW)^T,DZ\>+\div W\cdot\div Z\big)dg
\nonumber\\
&&\leq
C\|W\|_{\LL^2(\Ga,T)}\|DZ\|_{\LL^2(\Ga,T^2)}
+C\|\sym DW\|_{[\WW^{1,2}({S},T^2)]'}\|DZ\|_{\WW^{1,2}({S},T^2)}
\nonumber\\
&&\leq C(\|\sym DW\|_{[\WW^{1,2}({S},T^2)]'}+\|W\|_{\LL^2(\Ga,T)})\|Z\|_{\WW^{2,2}({S},T)}\nonumber\\
&&\leq C(\|\sym DW\|_{[\WW^{1,2}({S},T^2)]'}+\|W\|_{\LL^2(\Ga,T)})\|W\|_{\LL^2(S,T)},\nonumber\eeq which proves (\ref{3.29}) for all $W\in\R(\B)\cap\X(S)$ by density. 

{\bf Step 2.}\,\,\,Let $\{\Phi_1, \cdots, \Phi_k\}$ be an orthonormal basis of $\N(\B),$ where $k=\dim\N(\B).$ Note that $\N(B)\subset W^{1,2}_0(S,T)\subset \X(S)$. By Step 1, it suffices to show that 
\beq&&
(\sum_{i=1}^k\a_i^2)^{1/2}+\|\sym DW\|_{[\WW^{1,2}({S},T^2)]'}\nonumber\\
&&\leq C\Big(\|\sym D(\sum_{i=1}^k\a_i\Phi_i+W)\|_{[\WW^{1,2}({S},T^2)]'}+\|W\|_{\LL^2(\Ga,T)}\Big)
\label{3.31}\eeq 
holds for all $(\a_1,\cdots,\a_k)\in\R^k$ and $W\in\R(\B)\cap\X(S).$ Assume for contradiction that it fails. There exist $\{\,(\a_{n1},\cdots,\a_{nk})\,\}\subset\R^k$ and $W_n\in\R(\B)\cap\X(S)$ for all $n\in\mathbb{N}^+$ satisfying 
$$(\sum_{i=1}^k\a_{ni}^2)^{1/2}+\|\sym DW_n\|_{[\WW^{1,2}({S},T^2)]'}=1,$$
$$\|\sum_{i=1}^k\a_{ni}\sym D\Phi_i+\sym DW_n\|_{[\WW^{1,2}({S},T^2)]'}+\|W_n\|_{\LL^2(\Ga,T)}\leq\frac1n\qflq n\in\mathbb{N}^+.$$
Passing to a subsequence, let $(\a_{n1},\cdots,\a_{nk})\rw(\a_{1},\cdots,\a_{k})$ as $n\to\infty$. 
 Then
\beq
W_n\rw 0\qiq\LL^2(\Ga,T),\quad \sym DW_n\rw U_0=-\sum_{i=1}^k\a_i\sym D\Phi_i\qiq [\WW^{1,2}({S},T^2)]'.\quad\quad\label{convergence}
\eeq
 By Step 1 and (\ref{convergence}), $\{W_n\}_{n=1}^\infty$ is a Cauchy series in $\X(S)$, in view that for all $n_1,n_2\in\mathbb{N}^+$, 
\beq
&&\|W_{n_1}-W_{n_2}\|^2_{\X(S)}\nonumber\\
=&&\|W_{n_1}-W_{n_2}\|^2_{\LL^2({S},T)}
+\|\sym D(W_{n_1}-W_{n_2})\|^2_{[\WW^{1,2}({S},T^2)]'}
+\|W_{n_1}-W_{n_2}\|^2_{\LL^2(\Ga,T)}
\nonumber\\
\le&&C\big(\|\sym D(W_{n_1}-W_{n_2})\|^2_{[\WW^{1,2}({S},T^2)]'}
+\|W_{n_1}-W_{n_2}\|^2_{\LL^2(\Ga,T)}\big)
\nonumber\\
\le&&C\big(\|\sym DW_{n_1}-U_0\|^2_{[\WW^{1,2}({S},T^2)]'}
+\|\sym DW_{n_2}-U_0\|^2_{[\WW^{1,2}({S},T^2)]'}
\nonumber\\
&&\qquad\qquad
+\|W_{n_1}\|^2_{\LL^2(\Ga,T)}
+\|W_{n_2}\|^2_{\LL^2(\Ga,T)}
\big).
\nonumber
\eeq
Hence, $W_n\rw W_0$ in $\X(S)$ for some $W_0\in \X(S)\subset\LL^2(S,T)$ with 
\[
\sym DW_0=-\sum_{i=1}^k\a_i\sym D\Phi_i\in \WW^{1,2}(S,T^2).\]
Then $W_0\in \WW^{1,2}_0(S,T)$ by first Korn's inequality and (\ref{convergence}). $W_0$ satisfies elliptic equation 
\[
\B W_0
=2\div \sym DW_0-D\div W_0
=-\sum_{i=1}^k\a_i\B\Phi_i=0
\]
on $S$ in weak sense. We conclude that $W_0\in D(\B)=\WW^{2,2}(S,T)\cap\WW^{1,2}_0(S,T)$ by the standard elliptic equation theory, which yields $W_0\in \N(\B)$. 

On the other hand, we also have $W_0\in \R(\B)$, since $\R(\B)$ is a closed subspace of $\LL^2(S,T)$ and $W_j\rw W_0$ in $\LL^2(S,T)$. 
Hence, $W_0=0$ and $\alpha_i = 0$, $i=1,\cdots,k$, which is a contradiction to 
\[(\sum_{i=1}^k\a_{i}^2)^{1/2}+\|\sym DW_0\|_{[\WW^{1,2}({S},T^2)]'}=1.
\]

\end{proof}

\begin{lem}\label{3l.2} The following estimate holds:
\be \|v\|_{[\WW^{1,2}({S})]'}\leq C(\|Dv\|_{[\WW^{2,2}({S},T)]'}+\|v\|_{[\WW^{3/2,2}(\Ga)]'})\qflq v\in\WW^{1,2}({S}).\label{3x.5}\ee
\end{lem}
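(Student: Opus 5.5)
Since $[\WW^{1,2}(S)]'$ is the dual of $\WW^{1,2}(S)$, I will estimate $\int_S v\,\varphi\,dg$ for an arbitrary $\varphi\in\WW^{1,2}(S)$ with $\|\varphi\|_{\WW^{1,2}(S)}\le1$. The strategy mirrors Step 1 of Lemma \ref{l3.1}, with the scalar Laplacian in place of $\B$. I will represent $\varphi$ (up to a constant) as the divergence of a smooth vector field. Integrating by parts against $v$ then turns the pairing into a pairing of $Dv$ with that field, plus a boundary term.

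The steps, in order, are as follows.

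\emph{Step 1 (the auxiliary problem).} Let $\bar\varphi$ be the mean of $\varphi$ over $S$. Solve the Neumann problem
$$\Delta \psi=\varphi-\bar\varphi \qiq S,\qquad \partial_\nu\psi=0 \qoq \Ga,$$
which is solvable because the data has zero mean. Elliptic regularity on the $\CC^5$ surface, assuming $\Ga$ is smooth enough, gives
$$\|\psi\|_{\WW^{3,2}(S)}\le C\|\varphi\|_{\WW^{1,2}(S)}.$$
Set $Z=D\psi\in\WW^{2,2}(S,T)$, so that $\div Z=\varphi-\bar\varphi$ and $\<Z,\nu\>=0$ on $\Ga$.

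\emph{Step 2 (integration by parts).} Since $\<Z,\nu\>=0$ on $\Ga$,
$$\int_S v(\varphi-\bar\varphi)\,dg=\int_S v\div Z\,dg=-\int_S\<Dv,Z\>dg.$$
This is bounded by $\|Dv\|_{[\WW^{2,2}(S,T)]'}\|Z\|_{\WW^{2,2}}\le C\|Dv\|_{[\WW^{2,2}(S,T)]'}$.

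\emph{Step 3 (the constant mode).} It remains to bound $\bar\varphi\int_S v\,dg$, and this is where the boundary norm enters. I will use a second field: take $\chi$ solving
$$\Delta\chi=1 \qiq S,\qquad \partial_\nu\chi=|S|/|\Ga| \qoq \Ga,$$
which is compatible. Put $Y=D\chi$. Then
$$\int_S v\,dg=\int_S v\div Y\,dg=\frac{|S|}{|\Ga|}\int_\Ga v\,d\Ga-\int_S\<Dv,Y\>dg.$$
Because $\chi$ is smooth, with constant Neumann data, the boundary integral is bounded by $C\|v\|_{[\WW^{3/2,2}(\Ga)]'}\|1\|_{\WW^{3/2,2}(\Ga)}$. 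The interior term is bounded by $C\|Dv\|_{[\WW^{2,2}(S,T)]'}$. Finally $|\bar\varphi|\le C$. Taking the supremum over $\varphi$ yields (\ref{3x.5}).

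Alternatively, one can argue by compactness exactly as in Step 2 of Lemma \ref{l3.1}. The only element of the kernel of $v\mapsto Dv$ is a constant, and a nonzero constant is detected by the boundary norm. I prefer the direct construction above because it avoids a limit argument.

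The main obstacle is regularity of the auxiliary problems. In Step 1 we need $\psi\in\WW^{3,2}$ with $\WW^{1,2}$ data, which requires $\Ga$ to be of class $\CC^{2,1}$ or better, rather than merely Lipschitz as stated above for Lemma \ref{l3.1}. For the surfaces of interest, $\Ga$ consists of the smooth closed curves $\Ga_{b_1}$ and $\Ga_{-b_0}$ (or $\Ga_{-\varepsilon}$) on a $\CC^5$ surface, so this holds. I will state explicitly that the lemma is applied only in that setting.
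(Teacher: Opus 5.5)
Your argument is correct, but it takes a different route from the paper. The paper uses a single Dirichlet problem. For $f\in W^{1,2}(S)$ it solves $-\Delta z=f$ in $S$ with $z|_\Gamma=0$, and writes
\[
(v,f)_{L^2(S)}=-\int_\Gamma v\,\partial_\nu z\,d\Gamma+(Dv,Dz)_{L^2(S,T)} .
\]
It then bounds the boundary term by $\|v\|_{[W^{3/2,2}(\Gamma)]'}$, using that the trace of $Dz\in W^{2,2}(S,T)$ lies in $W^{3/2,2}(\Gamma,T)$. That is why exactly this boundary norm appears in (\ref{3x.5}).

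You instead use the Neumann problem, which removes the boundary term for the mean-zero part of the test function. You then treat the constant mode with the second fixed field $D\chi$. This costs an extra auxiliary problem and the mean splitting, but it gains something. The only boundary quantity entering your estimate is $\int_\Gamma v\,d\Gamma$, so you actually prove the stronger bound
\[
\|v\|_{[W^{1,2}(S)]'}\le C\Big(\|Dv\|_{[W^{2,2}(S,T)]'}+\Big|\int_\Gamma v\,d\Gamma\Big|\Big).
\]
This implies (\ref{3x.5}), since $|\int_\Gamma v\,d\Gamma|\le|\Gamma|^{1/2}\|v\|_{[W^{3/2,2}(\Gamma)]'}$. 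It also makes clear that the boundary term is needed only to detect the constants in the kernel of $D$. Your route needs no trace theorem beyond $1\in W^{3/2,2}(\Gamma)$.

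Both routes rely on $W^{3,2}$ elliptic regularity with $W^{1,2}$ data, and so on a boundary smoother than Lipschitz. The paper uses this tacitly, and you are right to state it explicitly.
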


\begin{proof} For a given $f\in\WW^{1,2}({S}),$ we solve the problem
$$-\Delta z=f\qflq x\in{S},\quad z|_\Ga=0.$$ Then
$\|z\|_{\WW^{3,2}({S})}\leq C\|f\|_{\WW^{1,2}({S})}.$
By integration by parts, we have
\beq(v,f)_{\LL^2({S})}&&=-\int_\Ga v\<Dz,\nu\>d\Ga+(Dv,Dz)_{\LL^2({S},T)}\nonumber\\
&&\leq\|v\|_{[\WW^{3/2,2}(\Ga)]'}\|Dz\|_{\WW^{3/2,2}(\Ga,T)}+\|Dv\|_{[\WW^{2,2}({S},T)]'}\|Dz\|_{\WW^{2,2}(\Ga,T)}\nonumber\\
&& \leq C(\|Dv\|_{[\WW^{2,2}({S},T)]'}+\|v\|_{[\WW^{3/2,2}(\Ga)]'})\|f\|_{\WW^{1,2}({S})}.\nonumber\eeq
Thus, \eqref{3x.5} follows by the definition of the dual norm.
\end{proof}

\begin{lem}\label{3l.3} Let ${S}\subset M$ be  a region whose boundary \(\Gamma\) consists of finitely many closed curves. Then
\be\|\div\div P\|_{[\WW^{2,2}({S})]'}\leq C\|P\|_{\LL^2({S},T^2)}\qflq P\in\LL^2({S},T^2).\label{3n.21}\ee
\end{lem}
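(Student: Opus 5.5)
The plan is a duality argument. Since $P\in\LL^2(S,T^2)$ is only square integrable, $\div\div P$ is understood in the generalized sense of Section 2. By (\ref{1.5}) and (\ref{n1.4}), $\div P(Z)=-P(DZ)$, and hence
$$(\div\div P)(z)=-(\div P)(Dz)=P(D^2z)=\int_S\<P,D^2z\>dg .$$
I would take this identity as the definition of $\div\div P$ as an element of $[\WW^{2,2}(S)]'$. It is exactly what one gets by integrating by parts twice when $P$ is smooth and the boundary terms are discarded. Whether those boundary terms are absent depends on the class of test functions $z$ used in the dual norm. The cleanest convention is to let $[\WW^{2,2}(S)]'$ be normed by testing against $z\in\WW^{2,2}(S)$, extended from $\D_2(S)$ by density, and with this convention the formula above holds with no boundary contributions.

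With this convention the estimate is immediate. The key steps are as follows.

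(i) Check that for $z\in\D_2(S)$ the definitions give $(\div\div P)(z)=\int_S\<P,D^2z\>dg$. This uses only (\ref{1.5}), (\ref{n1.4}), and the fact that $D(Dz)=D^2z$.

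(ii) Apply Cauchy--Schwarz:
$$|(\div\div P)(z)|\le\|P\|_{\LL^2(S,T^2)}\|D^2z\|_{\LL^2(S,T^2)}\le C\|P\|_{\LL^2(S,T^2)}\|z\|_{\WW^{2,2}(S)}.$$
The constant $C$ depends only on the $\CC^1$ bounds of the metric, because $D^2z$ in local coordinates involves second derivatives of $z$ plus Christoffel symbols times first derivatives. Since $S$ is compact and of class $\CC^5$, these bounds are finite.

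(iii) Take the supremum over $\|z\|_{\WW^{2,2}(S)}\le1$. This step needs the functional to extend continuously from $\D_2(S)$ (or $\CC^2(\ol S)$) to all of $\WW^{2,2}(S)$. That extension follows from the bound in (ii) together with the density of smooth functions in $\WW^{2,2}(S)$. Density in turn holds because $\Ga$ consists of finitely many closed curves, so $S$ is a Lipschitz (indeed smooth) domain admitting an extension operator. This is where the hypothesis on $\Ga$ enters.

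The only real obstacle is making sure the generalized operation $\div\div P$, which is defined on compactly supported test tensors, is interpreted consistently with the dual norm on $[\WW^{2,2}(S)]'$ used elsewhere in the paper (for example in Lemma \ref{3l.2}). Otherwise boundary terms of the form $\int_\Ga\<P\nu,Dz\>$ and $\int_\Ga z\<\div P,\nu\>$ would appear, and these cannot be controlled by $\|P\|_{\LL^2}$ alone. I would resolve this by fixing, as above, that $\div\div P$ denotes the functional $z\mapsto\int_S\<P,D^2z\>dg$ on $\WW^{2,2}(S)$. This functional agrees with the distributional definition on $\D_2(S)$, and it is the one that arises when the lemma is applied to $P=QUQ$-type terms from (\ref{1.26}). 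With this fixed, the lemma reduces to the one-line bound in step (ii).
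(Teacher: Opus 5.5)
Your proposal is correct and follows essentially the paper's route. The paper takes $P\in C^2_0(S,T^2)$ and integrates by parts twice; the boundary integrals over $\Ga$ drop out because $P$ is compactly supported, which gives $(\div\div P,z)_{\LL^2(S)}=\int_S\<P,D^2z\>dg$ for $z\in\WW^{2,2}(S)$. It then applies Cauchy--Schwarz and extends to $P\in\LL^2(S,T^2)$ by density, which yields exactly the functional $z\mapsto\int_S\<P,D^2z\>dg$ that you adopt as the definition. The only difference is that you read this formula off the generalized-derivative definitions instead of approximating $P$. One small correction: it is $\CC^2(\ol S)$, not $\D_2(S)$, that is dense in $\WW^{2,2}(S)$, although with your explicit definition no extension step is actually needed.
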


\begin{proof}
Assume $P\in C^2_0({S},T^2).$  From \cite[Lemma 2.3]{CY},  the identity
\be \div(PZ)=\<P,DZ\>+\<\div P, Z\>\qflq x\in{S},\quad Z\in\WW^{1,2}({S},T)\label{3x.22}\ee holds. For a given \(z \in W^{2,2}(S)\), applying \eqref{3x.22} and integrating by parts, we obtain
\beq\big|(\div\div P, z)_{\LL^2({S})}\big|
&&=\bigg|\int_{S}[\div(z\div P)-\<\div P,Dz\>]dg\bigg|\nonumber\\
&&=\bigg|\int_\Ga (z\<\div P,\nu\>-\<PDz,\nu\>)d\Ga+\int_{S}\<P,D^2z\>dg\bigg|\nonumber\\
&&\leq C\|P\|_{\LL^2({S},T^2)}\|z\|_{\WW^{2,2}({S})}.\label{3n.22}\eeq
(\ref{3n.21}) follows (\ref{3n.22}) and density.
\end{proof}

Suppose that for a given small $\varepsilon>0,$ a function  $\eta_0\in\CC^\infty_0(-\infty,\infty)$  is given satisfying
$$0\leq\eta_0\leq1;\quad \eta_0=0\qflq s\leq \varepsilon/2;\quad \eta_0=1\qflq s\geq\varepsilon.$$ Set
$$\eta(\a(t,s))=\varsigma\eta_0(s)\qflq (t,s)\in(0,a)\times(-b_0,b_1),$$ where $\varsigma$ is a large number, to be specified later.
For  the given small $\varepsilon>0,$  define
$${S}_{-\varepsilon}=\{\,\a(t,s)\,|\,(t,s)\in(0,a]\times(-\varepsilon,b_1)\,\}.$$
Suppose
$$X\in\CC^m({S}_{-\varepsilon},T)$$ is given in \cite[Section 2, (2.34)]{CY1}.  Let
\be\L_0V
=e^{-\g\kappa}[(\div Q\nabla\mathbf{n} V-\eta\<V,QX\>)QX+(\div V-\eta\<V,\nabla\mathbf{n} X\>)\nabla\mathbf{n} X],\label{defL}\ee
\beq\L_0^*V&&=\nabla\mathbf{n} Q D(e^{-\g\kappa}\<QX,V\>)-D(e^{-\g\kappa}\<\nabla\mathbf{n} X,V\>)\nonumber\\
&&\quad-\eta e^{-\g\kappa}(\<V,QX\>QX+\<V,\nabla\mathbf{n} X\>\nabla\mathbf{n} X),\label{defL*}\eeq for $V\in\LL^2({S}_{-\varepsilon},T).$ Let $m\geq0$ be an integer. By \cite[Lemma 2.3]{CY1} we have
\beq (W,\L_0V)_{\LL^2({S},T)}&&=(V,\L_0^*W)_{\LL^2({S},T)}\nonumber\\
&&\quad+\int_{\pl{S}}(\pounds_1\<V,\nu\>\<W,\nu\>-\pounds_2\<V,QX\>\<W,QX\>)d\Ga\label{3x.7}\eeq
for $V,$ $W\in T{S},$ where
\be\pounds_1=\frac{e^{-\g\kappa}}{\<X,\nu\>}\Pi(X,X),\quad \pounds_2=\frac{e^{-\g\kappa}}{\<X,\nu\>}\Pi(Q\nu,Q\nu) \qflq x\in\pl{S}.\label{n2.17x}\ee

Consider the problems
\be\left\{\begin{array}{l}\L_0V=F,\\
\<V,QX\>|_{\Ga_{-\varepsilon}}=p,\quad\<V,\nu\>|_{\Ga_{b_1}}=q\end{array}\right.\label{3x.9}\ee and
\be\left\{\begin{array}{l}\L_0^*V=F,\\
\<V,\nu\>|_{\Ga_{-\varepsilon}}=p,\quad\<V,QX\>|_{\Ga_{b_1}}=q,\end{array}\right.\label{3x.10}\ee respectively, where $F\in\WW^{m,2}({S}_{-\varepsilon},T),$ $p\in\WW^{m,2}(\Ga_{-\varepsilon}),$ and
$q\in\WW^{m,2}(\Ga_{b_1})$ are given. Here $\Ga_{-\varepsilon}=\{\,\a(t,-\varepsilon)\,|\,t\in[0,a]\,\}.$
It follows from \cite[Theorems 3.1 and 3.2]{CY1} that
\begin{thm}\label{3t.1} Let $S$ be of class $\CC^{m+3}.$ There exists a unique solution $V\in\WW^{m,2}({S}_{-\varepsilon},T)$ to problem $(\ref{3x.9}),$ or $(\ref{3x.10})$ satisfying
\beq &&\|V\|_{\WW^{m,2}({S}_{-\varepsilon},T)}+\|V\|_{\WW^{m,2}(\pl{S}_{-\varepsilon}, T)}\nonumber\\
&&\leq C( \|F\|_{\WW^{m,2}({S}_{-\varepsilon},T)}+ \|q\|_{\WW^{m,2}(\Ga_{b_1})}+ \|p\|_{\WW^{m,2}(\Ga_{-\varepsilon})}).\label{V3.2}\eeq
\end{thm}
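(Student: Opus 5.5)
The statement is Theorem \ref{3t.1}, which the paper says follows from \cite[Theorems 3.1 and 3.2]{CY1}. So the plan is to reduce both boundary value problems (\ref{3x.9}) and (\ref{3x.10}) to the setting of those theorems and check that the hypotheses are met. The underlying mechanism is a first-order system along two transversal directions, $QX$ and $\nabla\mathbf{n}X$. On $S^-$ the system is hyperbolic, and on $S^+$ it is elliptic; the weight $e^{-\g\kappa}$ and the large zero-order term $\eta$ (with $\varsigma$ large) are what make an energy estimate work across the degenerate line $\Ga_0$.

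The steps, in order, are as follows.

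\emph{Step 1: a priori estimate for $m=0$.} Take $W=V$ in the Green formula (\ref{3x.7}), or more precisely use a multiplier built from $V$ and $X$. Write $\L_0V$ in the components $\<V,QX\>$ and $\<V,\nabla\mathbf{n}X\>$. The vector field $X$ from \cite[(2.34)]{CY1} is chosen so that $QX$ and $\nabla\mathbf{n}X$ span $T_xS$ on $S_{-\varepsilon}$. This holds because of (\ref{Pi2.801}) and $D\kappa\ne0$ on $\Ga_0$, which give transversality at the parabolic line. Integrating by parts, the interior terms yield a quadratic form of the type $\int(\eta+\g\,\<D\kappa,\cdot\>+O(1))|V|^2$. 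Choosing $\g$ and then $\varsigma$ large makes this form coercive on $S_{-\varepsilon}$. The boundary terms $\pounds_1,\pounds_2$ have definite signs on $\Ga_{-\varepsilon}$ and $\Ga_{b_1}$, since there $\<X,\nu\>$ has a fixed sign and $\Pi(X,X)$, $\Pi(Q\nu,Q\nu)$ are controlled by the curvature assumptions. This is why one prescribes $\<V,QX\>$ on $\Ga_{-\varepsilon}$ and $\<V,\nu\>$ on $\Ga_{b_1}$ for $\L_0$, with the complementary choice for $\L_0^*$. The result is (\ref{V3.2}) for $m=0$, including the $L^2$ trace on $\pl S_{-\varepsilon}$.

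\emph{Step 2: existence and uniqueness.} Uniqueness follows from the a priori estimate. For existence, apply the same estimate to the adjoint problem with the dual boundary conditions. A Hahn--Banach/Riesz argument (\`a la Friedrichs weak-equals-strong) then gives a weak solution of the primal problem, and mollification in the $t$ direction, which is periodic, upgrades it to a strong solution.

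\emph{Step 3: higher regularity.} Differentiate tangentially in $t$; this commutes with the boundary data because $\Ga_{-\varepsilon}$ and $\Ga_{b_1}$ are $s$-level curves. Then recover the $s$-derivatives from the equation itself, using that the system is noncharacteristic with respect to the level curves away from degeneracy. Iterating up to order $m$ costs $m+3$ derivatives of $S$, since $\Pi$, $\kappa$ and $X$ enter with up to two further derivatives.

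The main obstacle is the coercivity in Step 1 near $\Ga_0$, where $\kappa$ changes sign and the elliptic/hyperbolic structure degenerates. I would handle it via the $e^{-\g\kappa}$ weight, which produces a term $\g\,\Pi$-weighted $|D\kappa|$ of fixed sign. Since this is exactly the content of \cite{CY1}, in the paper I would simply cite \cite[Theorems 3.1 and 3.2]{CY1} after verifying that $S_{-\varepsilon}$ and $X$ meet their hypotheses.
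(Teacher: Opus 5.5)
Your proposal matches the paper: the paper gives no independent argument for Theorem~\ref{3t.1} and simply invokes \cite[Theorems 3.1 and 3.2]{CY1}, which is exactly where you end up. One caution if you ever try to make Step~1 self-contained: $\L_0$ has symmetric principal symbol $A(\xi)V=e^{-\g\kappa}[\<\xi,V\>\nabla\mathbf{n}X-\<\nabla\mathbf{n}Q\xi,V\>QX]$ with $\det A(\xi)=-e^{-2\g\kappa}\Pi(X,X)\Pi(Q\xi,Q\xi)$, so the weight contributes $\frac{\g}{2}\<A(D\kappa)V,V\>$, which is indefinite near $\Ga_0$ (there $\Pi(X,X)>0$, since $QX,\nabla\mathbf{n}X$ must span and $\Pi>0$ on $S^+$, and $\Pi(QD\kappa,QD\kappa)>0$ by (\ref{Pi2.801})), while $\eta=0$ for $s\le\varepsilon/2$, so taking $\g$ and then $\varsigma$ large does not give coercivity across the parabolic line and that step would not close as you describe it.
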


We define the linear operator $\L_0:$ $\WW^{m,2}({S}_{-\varepsilon},T)\rw\WW^{m,2}({S}_{-\varepsilon},T)$ by
\be\L_0V
=e^{-\g\kappa}[(\div Q\nabla\mathbf{n} V)QX+(\div V)\nabla\mathbf{n} X]-\C V\label{x3.1}\ee with domain
\beq D(\L_0)=\{\,V\in \WW^{m,2}(S,T)\big|&&\,\div V\in \WW^{m,2}({S}_{-\varepsilon}), \div Q\nabla\mathbf{n} V\in \WW^{m,2}({S}_{-\varepsilon})\nonumber\\
&&\,\<V,QX\>|_{\Ga_{-\varepsilon}}=\<V,\nu\>|_{\Ga_{b_1}}=0\,\},\nonumber\eeq
where
\be\C V=\eta e^{-\ga\kappa}(\<V,QX\>QX+\<V,\nabla\mathbf{n} X\>\nabla\mathbf{n} X),\quad \eta(x)=\varsigma\eta_0(s)\label{n3.1}\ee  for $x=\a(t,s)\in{S}_{-\varepsilon}.$
Moreover, set
\be\L_0^*V=\nabla\mathbf{n} Q D(e^{-\g\kappa}\<QX,V\>)-D(e^{-\g\kappa}\<\nabla\mathbf{n} X,V\>)-\C V.\label{n3.2}\ee The domain of \(\mathcal{L}_0^*\) is
$$ D(\L_0^*)=\{\,V\in \WW^{m,2}(S,T)\big|\,\L_0^*V\in\WW^{m,2}({S}_{-\varepsilon},T),\,\<V,\nu\>|_{\Ga_{-\varepsilon}}=\<V,QX\>|_{\Ga_{b_1}}=0\,\}.$$

By Theorem \ref{3t.1} and \cite[Theorems 3.3 and 3.4]{CY1} we have the following.
\begin{thm}\label{3t.2}
$(i)$\,\,\,The operators $\L_0$ and $\L_0^*$ have bounded inverses on $\WW^{m,2}({S}_{-\varepsilon},T)$ with the estimates
\beq&&\|\L_0^{-1}V\|_{\WW^{m,2}({S}_{-\varepsilon},T)}+\|\L_0^{-1}V\|_{\WW^{m,2}(\pl{S}_{-\varepsilon}, T)}+\nonumber\\
&&\|{\L_0^*}^{-1}V\|_{\WW^{m,2}({S}_{-\varepsilon},T)}+\|{\L_0^*}^{-1}V\|_{\WW^{m,2}(\pl{S}_{-\varepsilon}, T)}\leq C\|V\|_{\WW^{m,2}({S}_{-\varepsilon},T)}\eeq for all $V\in\WW^{m,2}({S}_{-\varepsilon},T).$

$(ii)$\,\,\,The operators $\L^{-1}_0\T$ and ${\L_0^*}^{-1}\T:$ $\WW^{m,2}({S}_{-\varepsilon},T)\rw\WW^{m+1,2}({S}_{-\varepsilon},T)$ are bounded, where $\T\in C^2({S}_{-\varepsilon},T^2)$ is a given linear transformation field with support in $S_{\varepsilon/2}=\{\,\a(t,s)\,|\,(t,s)\in[0,a)\times(\varepsilon/2,b_1)\,\}$. 
\end{thm}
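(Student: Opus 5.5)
Part (i) follows from Theorem \ref{3t.1}, and part (ii) from a regularity gain: wherever the lower-order data has support, the operator $\L_0$ (respectively $\L_0^*$) is elliptic.

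\textbf{Part (i).} Take $F\in\WW^{m,2}({S}_{-\varepsilon},T)$ and apply Theorem \ref{3t.1} to problem (\ref{3x.9}) with $p=0$, $q=0$. This gives a unique $V\in\WW^{m,2}({S}_{-\varepsilon},T)$ with $\L_0V=F$ and the boundary conditions built into $D(\L_0)$. From the structure of (\ref{x3.1}) we can read off $\div V$ and $\div Q\nabla\mathbf{n} V$. The vectors $QX$ and $\nabla\mathbf{n} X$ are linearly independent: by (\ref{Pi2.801}) and the choice of $X$ in \cite{CY1}, $\<QX,\nabla\mathbf{n} X\>$-type determinants do not vanish. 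Hence these two quantities equal the two coordinates of $e^{\g\kappa}(F+\C V)$ in the frame $\{QX,\nabla\mathbf{n} X\}$, so they lie in $\WW^{m,2}$, and therefore $V\in D(\L_0)$. Uniqueness in Theorem \ref{3t.1} gives injectivity, and (\ref{V3.2}) with $p=q=0$ is exactly the claimed bound on $\|\L_0^{-1}F\|_{\WW^{m,2}({S}_{-\varepsilon},T)}+\|\L_0^{-1}F\|_{\WW^{m,2}(\pl S_{-\varepsilon},T)}$. The argument for $\L_0^*$ is identical, using (\ref{3x.10}) with $p=q=0$ and the domain $D(\L_0^*)$. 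This gives bounded inverses on $\WW^{m,2}$.

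\textbf{Part (ii).} Set $V=\L_0^{-1}\T F$ with $F\in\WW^{m,2}$. Then $\L_0V=\T F$, and the right-hand side belongs to $\WW^{m,2}$ with support in $S_{\varepsilon/2}$. The goal is to show that $V\in\WW^{m+1,2}$ with the corresponding estimate. I would use a localization into two regions.

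\emph{Near the interface.} On the strip $S_{-\varepsilon}\setminus\ol{S_{\varepsilon/2}}$, which contains $\Ga_0$ and $\Ga_{-\varepsilon}$, we have $\T F=0$ and $\eta=0$. So $V$ solves the homogeneous first-order system $\L_0V=0$ with homogeneous data on $\Ga_{-\varepsilon}$. Higher regularity there should come from the $\WW^{m+1,2}$ version of Theorem \ref{3t.1}, which needs $S\in\CC^{m+4}$. Concretely: take a cutoff $\chi$ equal to $1$ near $\Ga_{-\varepsilon}$ and supported in $s<\varepsilon/2$. Then $\L_0(\chi V)=[\L_0,\chi]V$, and this commutator is a zeroth-order term in $V$, hence in $\WW^{m,2}$. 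That alone does not gain a derivative. The gain must instead come from the hyperbolic/mixed-type energy estimates of \cite{CY1}, which propagate regularity from the data: the data here are zero on this strip, so the solution is as regular as the coefficients allow.

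\emph{On the elliptic part.} On $S_{\varepsilon/2}$ the curvature satisfies $\kappa>0$. There the principal part $(\div Q\nabla\mathbf{n} V,\div V)$ is an elliptic first-order system, of generalized Cauchy--Riemann type since $\nabla\mathbf{n}$ is definite. Standard interior and boundary elliptic estimates at $\Ga_{b_1}$, where the Lopatinskii condition holds for the normal-component condition, give
$$\|V\|_{\WW^{m+1,2}}\le C(\|\T F\|_{\WW^{m,2}}+\|V\|_{\WW^{m,2}}).$$
Combining this with part (i) gives the bound for $\L_0^{-1}\T$, and the same scheme handles ${\L_0^*}^{-1}\T$.

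The main obstacle is the region near the parabolic curve $\Ga_0$, where ellipticity degenerates. I expect to rely on \cite[Theorems 3.3, 3.4]{CY1}, which provide precisely the smoothing across $\Ga_0$ when the forcing is supported away from it, rather than reproving that estimate here.
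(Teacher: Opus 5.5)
Your proposal follows the same route as the paper, whose entire proof is to read off (i) from Theorem \ref{3t.1} with $p=q=0$ and to invoke \cite[Theorems 3.3 and 3.4]{CY1} for (ii), which is also where your argument ends up. One correction to your sketch: the cutoff argument you discarded does close on its own if $D\chi$ is supported in a compact sub-strip of $\{0<s<\varepsilon/2\}$, because there $\L_0V=0$ and the system is elliptic, so $V$ is locally as regular as the coefficients allow, $[\L_0,\chi]V\in\WW^{m+1,2}$ with norm bounded by $C\|V\|_{\WW^{m,2}}$, and Theorem \ref{3t.1} at level $m+1$ (for $S\in\CC^{m+4}$) plus uniqueness then gives $\chi V\in\WW^{m+1,2}$ without any separate smoothing result across $\Ga_0$.
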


Define the operators $\L$ and $\L^*:$ $\D'_m(S,T)\rightarrow \D'_{m+1}(S,T)$ by
$$\L V=\L_0 V+\C V\quad\mbox{and}\quad \L^*V=\L_0^*V+\C V,$$ respectively.
Given $Z_0\in \E_m({S}_{-\varepsilon},T)$, consider the problem
\be
\begin{cases}
\L^*Z+\nabla\mathbf{n} Dz=Z_0\qflq x\in{S}_{-\varepsilon},\\
-\Delta z=e^{-\g\kappa}\Pi(X,Z)\tr\Pi\qflq x\in{S}_{-\varepsilon},\\
\pl_\nu z\big|_{\pl{S}_{-\varepsilon}}=
\<Z,\nu\>\big|_{\Ga_{-\varepsilon}}=\<Z,QX\>\big|_{\Ga_{b_1}}=0.
\end{cases}\label{1.33}
\ee

We define the operator $\A:$ $\LL^2({S},T)\rw\WW^{1,2}({S},T)$ by
$$\A Z=\nabla\mathbf{n} Dz\qflq Z\in\LL^2({S},T),$$ where $z\in \WW^{2,2}({S},T)$ is the solution to 
$$\left\{\begin{array}{l}-\Delta z=e^{-\g\kappa}\<Z,\nabla\mathbf{n} X\>\tr\Pi\qflq x\in{S},\\
\pl_\nu z=0\quad\mbox{on}\quad x\in\Ga.\end{array}\right.$$ Then
$\A:$ $\LL^2({S},T)\rw\LL^2({S}_{-\varepsilon},T)$ is compact. By Theorem \ref{3t.2}, it follows that
$${\L_0^*}^{-1}(\C+\A):\quad \LL^2({S},T)\rw\LL^2({S}_{-\varepsilon},T)$$ is also compact. Define the subspaces
$$\N=\{\,W\in\LL^2({S}_{-\varepsilon},T)\,|\,W+{\L_0^*}^{-1}(\C+\A)W=0\,\},$$
$$\N_*=\{\,W\in\LL^2({S}_{-\varepsilon},T)\,|\,W+\L_0^{-1}(\C+\A^*)W=0\,\}.$$ Both \(\mathcal{N}\) and \(\mathcal{N}_*\) have finite dimension, i.e., \(\dim \mathcal{N} < \infty\) and \(\dim \mathcal{N}_* < \infty\).

\begin{pro}\label{p3.1}  Let $S$ be of class $\CC^{m+3}.$
Problem $(\ref{1.33})$ admits a unique solution $Z\perp \N$ in $\LL^2({S}_{-\varepsilon},T)$ if and only if  $Z_0\perp \N_*$ in $\LL^2({S}_{-\varepsilon},T).$ Moreover, there exists $C>0$ such that if
$Z_0\in\WW^{2,2}({S}_{-\varepsilon},T)$ and $(Z,z)$ solves problem $(\ref{1.33})$ with $Z\perp \N,$  then $(Z,z)\in\WW^{m,2}({S}_{-\varepsilon},T)\times\WW^{m+2,2}({S}_{-\varepsilon})$ satisfying
\be\|Z\|^2_{\WW^{m,2}({S}_{-\varepsilon},T)}+\|z\|^2_{\WW^{m+2,2}({S}_{-\varepsilon})}+\|Z\|^2_{\WW^{m,2}(\pl{S}_{-\varepsilon},T)}\leq C\|Z_0\|^2_{\WW^{m,2}({S}_{-\varepsilon},T)}.\label{3.2}\ee
\end{pro}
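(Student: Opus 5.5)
Problem (\ref{1.33}) is a compact perturbation of the invertible operator $\L_0^*$ of Theorem \ref{3t.2}, so the plan is to recast it as a Fredholm equation of the second kind and apply the Fredholm alternative. First I eliminate $z$. For $Z\in\LL^2(S_{-\varepsilon},T)$, the Neumann problem
$$-\Delta z=e^{-\g\kappa}\Pi(X,Z)\tr\Pi\quad\mbox{in}\quad S_{-\varepsilon},\qquad \pl_\nu z=0\quad\mbox{on}\quad\pl S_{-\varepsilon}$$
requires the compatibility condition $\int e^{-\g\kappa}\Pi(X,Z)\tr\Pi\,dg=0$. I will either build this into the admissible class, or normalize $z$ to have zero mean and work modulo constants; in the latter case $Dz$ and hence $\A Z=\nabla\mathbf{n}Dz$ are unaffected. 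Elliptic regularity gives $\|z\|_{\WW^{k+2,2}}\le C\|Z\|_{\WW^{k,2}}$. In particular $\A:\LL^2\rw\WW^{1,2}$ is bounded, and it is compact into $\LL^2(S_{-\varepsilon},T)$ by Rellich.

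With $\L^*=\L_0^*+\C$, the system becomes $\L_0^*Z+(\C+\A)Z=Z_0$ together with the boundary conditions encoded in $D(\L_0^*)$. Applying ${\L_0^*}^{-1}$, which exists by Theorem \ref{3t.2}(i), this is equivalent to
$$Z+K Z={\L_0^*}^{-1}Z_0,\qquad K={\L_0^*}^{-1}(\C+\A).$$
The operator $K$ is compact on $\LL^2$. For the $\A$ part this is immediate. For the $\C$ part I use Theorem \ref{3t.2}(ii): $\C$ is a transformation field supported in $S_{\varepsilon/2}$, because $\eta_0=0$ for $s\le\varepsilon/2$, so ${\L_0^*}^{-1}\C$ maps $\LL^2$ into $\WW^{1,2}$ and is therefore compact. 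The Riesz–Schauder theory now applies to $I+K$: its kernel is $\N$, it is finite dimensional, and its range is closed. Moreover $I+K$ is injective on $\N^\perp$, which gives uniqueness of the solution $Z\perp\N$.

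For the solvability criterion, the range of $I+K$ equals $\ker(I+K^*)^\perp$. Since $K^*=(\C+\A^*)\L_0^{-1}$, with $\C$ symmetric and $({\L_0^*}^{-1})^*=\L_0^{-1}$ by the Green formula (\ref{3x.7}) and the complementary boundary conditions, the equation ${\L_0^*}^{-1}Z_0\perp\ker(I+K^*)$ transfers to $Z_0\perp\N_*$. Indeed, if $\Phi+(\C+\A^*)\L_0^{-1}\Phi=0$, set $\Psi=\L_0^{-1}\Phi$; then $\Psi+\L_0^{-1}(\C+\A^*)\Psi=0$, so $\Psi\in\N_*$. In the other direction, $({\L_0^*}^{-1}Z_0,\Phi)=(Z_0,\L_0^{-1}\Phi)=(Z_0,\Psi)$. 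I expect this to be the delicate step: I must check carefully that the adjoint pairing in (\ref{3x.7}) has vanishing boundary terms under the stated boundary conditions, and that $\A^*$ is well defined. Here $\A^*W=e^{-\g\kappa}\tr\Pi\,\Pi(X,\cdot)\,\zeta$, where $\zeta$ solves a Neumann problem with datum $-\div(\nabla\mathbf{n}W)$ and flux $\<\nabla\mathbf{n}W,\nu\>$.

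For the estimate (\ref{3.2}), I first obtain the $\LL^2$ bound $\|Z\|\le C\|Z_0\|$ on $\N^\perp$ by the bounded inverse theorem, or by a contradiction/compactness argument. I then bootstrap regularity. Writing $Z=-KZ+{\L_0^*}^{-1}Z_0$, Theorem \ref{3t.2} gives ${\L_0^*}^{-1}(\C+\A)$ from $\WW^{k,2}$ into $\WW^{k+1,2}$ for $k<m$, since $\A$ gains a derivative and $\C$ is supported in $S_{\varepsilon/2}$. Also ${\L_0^*}^{-1}Z_0\in\WW^{m,2}$, together with its boundary trace, by Theorem \ref{3t.2}(i). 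Iterating $m$ times yields the bound on $\|Z\|_{\WW^{m,2}}+\|Z\|_{\WW^{m,2}(\pl S_{-\varepsilon})}$. Finally, elliptic regularity for $z$ gives $\|z\|_{\WW^{m+2,2}}\le C\|Z\|_{\WW^{m,2}}$, which closes (\ref{3.2}).
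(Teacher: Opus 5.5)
Your proposal follows the paper's route: recast (\ref{1.33}) as $Z+{\L_0^*}^{-1}(\C+\A)Z={\L_0^*}^{-1}Z_0$, get compactness from Theorem \ref{3t.2}(ii) and Rellich, apply the Fredholm alternative, and recover the $\WW^{m,2}$ bounds (the paper cites the regularity results of [CY1] at this point, whereas you bootstrap through Theorem \ref{3t.2}); you also supply two steps the paper leaves implicit, namely the correspondence $\Phi\mapsto\L_0^{-1}\Phi$ between $\ker(I+K^*)$ and $\N_*$, and the bounded-inverse $\LL^2$ bound on $\N^\perp$. One caveat on your side remark: normalizing $z$ to zero mean only fixes the non-uniqueness of the Neumann problem and gives no solution when $\int_{S_{-\varepsilon}}e^{-\g\kappa}\Pi(X,Z)\tr\Pi\,dg\neq0$, so the compatibility issue stays unresolved in your proposal, just as the paper leaves it unmentioned.
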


\begin{proof} For a given $Z_0\in\LL^2({S}_{-\varepsilon},T),$ it is straightforward to verify that $Z\in\LL^2({S}_{-\varepsilon},T)$ solves problem (\ref{1.33}) if and only if $Z$ satisfies
$$Z+{\L_0^*}^{-1}(\C+\A)Z={\L_0^*}^{-1}Z_0\qiq\LL^2({S}_{-\varepsilon},T).$$
By Fredholm's theorem, this problem has a solution $Z\in\LL^2({S}_{-\varepsilon},T)$ if and only if $Z_0\perp \N_*$ in $\LL^2({S}_{-\varepsilon},T).$

New, let $Z_0\in\WW^{m,2}({S}_{-\varepsilon},T).$ By \cite[Theorems 3.1 and 3.2]{CY1}, we have  $Z\in\WW^{m,2}({S}_{-\varepsilon},T).$ By the regularity theory for elliptic problems,  $z\in\WW^{m+2,2}({S}_{-\varepsilon})$ and the estimate
$$\|z\|^2_{\WW^{m+2,2}({S}_{-\varepsilon})}\leq C\|Z\|^2_{\WW^{m,2}({S}_{-\varepsilon},T)}$$ holds.
 Additionally, using \cite[Lemma 3.2 and Theorem 2.2]{CY1}, we obtain
$$\|Z\|^2_{\WW^{m,2}({S}_{-\varepsilon},T)}+\|Z\|^2_{\WW^{m,2}(\pl{S}_{-\varepsilon},T)}\leq C\|Z_0\|^2_{\WW^{m,2}({S}_{-\varepsilon},T)}.$$ Combining these results yields \eqref{3.2}. 
\end{proof}

Let $\{\Phi_1, \cdots, \Phi_k\}$ be an orthonormal basis of $\N,$ where $k=\dim\N.$ Define the projection operator ${\P}:$ $[\WW^{m,2}({S}_{-\varepsilon},T)]'\rw\N$ by
\beq
{\P}V=\sum_{j=1}^k(V,\Phi_j)_{\LL^2({S}_{-\varepsilon},T)}\Phi_k\qflq V\in[\WW^{m,2}({S}_{-\varepsilon},T)]'.\label{defP}
\eeq

\begin{thm}\label{t3.3} Let $S$ be of class $\CC^5.$ Then there exists a constant $C>0$ such that
\beq&&\|V\|_{[\WW^{2,2}({S}_{-\varepsilon},T)]'}+\|v\|_{[\WW^{1,2}({S}_{-\varepsilon})]'}\leq C\Big(\|{\P}V\|_{[\WW^{2,2}({S}_{-\varepsilon},T)]'}+I(F,f,p,q)\Big),\quad\label{3x.25}\eeq 
for all $(v,V)\in \WW^{2,2}(S_{-\varepsilon})\times \WW^{2,2}(S_{-\varepsilon},T)$, 
where
 \be \left\{\begin{array}{l}
F=Dv-\nabla\mathbf{n} V,\\
f=\div V+(\tr\Pi)v,\\
p=\<V,QX\>|_{\Ga_{-\varepsilon}},\quad q=\<V,\nu\>|_{\Ga_{b_1}},\end{array}\right.\label{3.11n}\ee
and
\beq I(F,f,p,q)&&=\|\div QF\|_{[\WW^{2,2}({S}_{-\varepsilon})]'}+\|F\|_{[\WW^{2,2}({S}_{-\varepsilon},T)]'}+\|f\|_{[\WW^{2,2}({S}_{-\varepsilon})]'}\nonumber\\
&&\quad
+\|p\|_{\WW^{-2,2}(\Ga_{-\varepsilon})}+\|q\|_{\WW^{-2,2}(\Ga_{b_1})}.\nonumber\eeq
\end{thm}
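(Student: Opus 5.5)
We prove \eqref{3x.25} by duality, testing $V$ against solutions of the auxiliary problem \eqref{1.33}. Let $Z_0\in\WW^{2,2}({S}_{-\varepsilon},T)$ be given with $\|Z_0\|_{\WW^{2,2}}\le1$. Write $Z_0=(Z_0-\P_*Z_0)+\P_*Z_0$, where $\P_*$ is the orthogonal projection onto $\N_*$. Then $Z_0-\P_*Z_0\perp\N_*$. By Proposition \ref{p3.1} with $m=2$, there is a unique solution $(Z,z)$ of \eqref{1.33} with right-hand side $Z_0-\P_*Z_0$ and $Z\perp\N$. It satisfies
\[
\|Z\|_{\WW^{2,2}({S}_{-\varepsilon},T)}+\|z\|_{\WW^{4,2}({S}_{-\varepsilon})}+\|Z\|_{\WW^{2,2}(\pl{S}_{-\varepsilon},T)}\le C .
\]
The component $\P_*Z_0$ lies in the finite-dimensional space $\N_*$, on which all norms are equivalent. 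We will bound $(V,\P_*Z_0)$ by $\|\P V\|$ plus data, as explained below. Hence it suffices to estimate $(V,\L^*Z+\nabla\mathbf{n}Dz)_{\LL^2}$.

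\textbf{Pairing identity.} By \eqref{3x.7}, $(V,\L^*Z)=(V,\L_0^*Z)+(V,\C Z)$ equals $(\L V,Z)$ plus boundary terms. By the boundary conditions $\<Z,\nu\>|_{\Ga_{-\varepsilon}}=\<Z,QX\>|_{\Ga_{b_1}}=0$, only $\<V,QX\>$ on $\Ga_{-\varepsilon}$ and $\<V,\nu\>$ on $\Ga_{b_1}$ survive. These are $p$ and $q$, paired with traces of $Z$ controlled in $\WW^{2,2}(\pl S_{-\varepsilon})$, which gives the $\WW^{-2,2}$ norms of $p$ and $q$. Next expand $\L V=e^{-\g\kappa}[(\div Q\nabla\mathbf{n}V)QX+(\div V)\nabla\mathbf{n}X]$ using \eqref{3.11n}:
\begin{itemize}
\item[(a)] $\div V=f-(\tr\Pi)v$;
\item[(b)] $Q\nabla\mathbf{n}V=Q(Dv-F)$, so $\div Q\nabla\mathbf{n}V=\div QDv-\div QF=-\div QF$, since $\div QDv=0$.
\end{itemize}
Hence
\[
(\L V,Z)=-(\div QF,e^{-\g\kappa}\<QX,Z\>)+(f,e^{-\g\kappa}\<\nabla\mathbf{n}X,Z\>)-(v,e^{-\g\kappa}\tr\Pi\,\Pi(X,Z)).
\]
Using the second equation of \eqref{1.33}, the last term equals $(v,\Delta z)$. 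For the term $(V,\nabla\mathbf{n}Dz)=(\nabla\mathbf{n}V,Dz)=(Dv-F,Dz)$, integrate by parts: $(Dv,Dz)=-(v,\Delta z)+\int_{\pl}v\,\pl_\nu z=-(v,\Delta z)$, by the Neumann condition. The two $v$-terms cancel. This cancellation is exactly why \eqref{1.33} couples $z$ to $Z$ in this way. What remains is $-(F,Dz)$ together with the $\div QF$, $f$, $p$, $q$ terms. Each is bounded by $I(F,f,p,q)$ times $\WW^{2,2}$ norms of $Z$ and $z$: note $Dz\in\WW^{3,2}$, and $e^{-\g\kappa}\<QX,Z\>\in\WW^{2,2}$ needs $S\in\CC^5$. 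Thus
\[
|(V,Z_0-\P_*Z_0)|\le C\,I .
\]

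\textbf{Controlling the kernel component and $v$.} This is the main obstacle: the finite-dimensional piece $(V,\P_*Z_0)$. Here I would use that $\N$ and $\N_*$ are related through the Fredholm structure: $\dim\N=\dim\N_*$, and for $\Psi\in\N_*$ one has $\L^*$-type identities with elements of $\N$. Concretely, I would try to show that $V\mapsto(V,\Psi_j)$ for a basis $\Psi_j$ of $\N_*$ is controlled by $|\P V|$ plus $I$. This may instead come from a compactness argument: if the estimate failed, one would get $V_n$ with $\P V_n\to0$ and $I_n\to0$. Weak limits would satisfy the homogeneous system, and the duality bound above would force the limit to vanish. If this proves too delicate, I would simply absorb $(V,\P_*Z_0)$ into the term $\|\P V\|$ by choosing the projection appropriately. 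Combining the two parts and taking the supremum over $Z_0$ gives $\|V\|_{[\WW^{2,2}]'}\le C(\|\P V\|+I)$.

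Finally, for $v$ apply Lemma \ref{3l.2}-type duality on ${S}_{-\varepsilon}$ with $Dv=\nabla\mathbf{n}V+F$, which gives $\|Dv\|_{[\WW^{2,2}]'}\le C(\|V\|_{[\WW^{2,2}]'}+\|F\|_{[\WW^{2,2}]'})$. The boundary term of $v$ does not fit directly into that lemma, so I would instead test with Neumann solutions: given $g\in\WW^{1,2}$, solve $-\Delta\zeta=g-\bar g$ with $\pl_\nu\zeta=0$. Then $(v,g-\bar g)=(Dv,D\zeta)=(\nabla\mathbf{n}V+F,D\zeta)$, and the mean of $v$ is controlled via $f$ and $\div V$ tested against $1$ (this requires $\tr\Pi\ne0$, or else a further compactness step).
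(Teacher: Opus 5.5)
Your duality argument is the paper's. You test $V$ against $\L^*Z+\nabla\mathbf{n}Dz$ with $(Z,z)$ from Proposition~\ref{p3.1}. You let \eqref{3x.7} and the boundary conditions leave only $p,q$, rewrite $\L V$ through \eqref{3.11n}, and cancel the two $v$-terms via $\pl_\nu z=0$.

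\textbf{The kernel component.} The paper uses exactly your fallback. It tests only against $Z_0\in\Xi$ and represents $V$ restricted to $\Xi$ by some $V_0$ with $\|V_0\|_{[\WW^{2,2}]'}\le CI$. It then notes $V-V_0\in\N_*$ and simply \emph{names} this component ${\P}V$. So what is actually proved, by the paper and by you, is the estimate with a projection onto $\N_*$, not the $\N$-projection of \eqref{defP}. Your splitting with the $\LL^2$-orthogonal projection ${\P}_*$ gives this cleanly, because $|(V,{\P}_*Z_0)|\le C\sum_j|(V,\Psi_j)|\,\|Z_0\|_{\WW^{2,2}}$ for an orthonormal basis $\{\Psi_j\}$ of $\N_*$. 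Drop options (i) and (ii). Option (ii) cannot work as sketched: the duality bound only sees $V$ on $\Xi$, the problem with $I=0$ may have nontrivial solutions (this is what $\N_*$ records), and ${\P}V_\infty=0$ for a projection onto $\N$ does not exclude them.

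\textbf{The mean of $v$.} You are right that it needs separate control. The paper solves $\Delta w=z_0$, $\pl_\nu w=0$ for arbitrary $z_0$, ignoring the compatibility condition. The pair $V=0$, $v\equiv c$ shows that \eqref{n3x.31} cannot hold as written, since a constant is seen only through $f=c\,\tr\Pi$.

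Your fix, testing $\div V=f-(\tr\Pi)v$ against $1$, does not close. Indeed $(\div V,1)=\int_{\pl{S}_{-\varepsilon}}\<V,\nu\>d\Ga$, and on $\Ga_{-\varepsilon}$ the data prescribe only $\<V,QX\>$, not the normal component. Nothing in $I$ or in $\|V\|_{[\WW^{2,2}]'}$ controls that trace.

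Test instead against some $\phi\in\CC_0^\infty(S^+)$ with $\phi\ge0$ and $\phi\not\equiv0$. Since $\kappa>0$ on $S^+$, the function $\tr\Pi$ has no zero and a fixed sign there, so $\int\phi\,\tr\Pi\ne0$. With $\bar v$ the mean of $v$, one has $\bar v\int\phi\,\tr\Pi=(f,\phi)+(V,D\phi)-(v-\bar v,\phi\,\tr\Pi)$, with no boundary term. The right side is bounded by $\|f\|_{[\WW^{2,2}]'}$, by $\|V\|_{[\WW^{2,2}]'}$, and by your Neumann estimate of $\|v-\bar v\|_{[\WW^{1,2}]'}$ in terms of $\|V\|_{[\WW^{2,2}]'}+\|F\|_{[\WW^{2,2}]'}$. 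So no hypothesis on $\tr\Pi$ and no extra compactness step is needed.
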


\begin{proof} Let $(v,V)$ be of $C^2$ on $S_{-\varepsilon}$. From (\ref{3.11n}) we derive
\be \left\{\begin{array}{l}\div Q\nabla \n V=-\div QF,\\
\div V=-\rho v+f,\end{array}\right.\label{3x.26}\ee where $\rho=\tr\Pi.$
Combining \eqref{x3.1}, \eqref{n3.1}, and \eqref{3x.26}, we obtain
\be\L_0V+\C V=G,\label{3x.27}\ee where
$$G=-e^{-\g\kappa}[(\div QF)QX+(\rho v-f)\nabla\mathbf{n} X].$$

Define
\be\Xi=\{\, Z\in\WW^{2,2}({S}_{-\varepsilon},T)\,|\,Z\perp\N_*\,\mbox{in $\LL^2({S}_{-\varepsilon},T)$\,}\}.\label{3n.27}\ee For a given $Z_0\in\Xi,$ solving problem (\ref{1.33}) yields a solution $(Z, z)\in\WW^{2,2}({S}_{-\varepsilon},T)\times\WW^{4,2}({S}_{-\varepsilon}).$ Let
 $$z_1=e^{-\g\kappa}\<Z,QX\>,\quad z_2=\rho e^{-\g\kappa}\<Z,\nabla\mathbf{n} X\>,\quad z_3=e^{-\g\kappa}\<Z,\nabla\mathbf{n} X\>.$$ By (\ref{1.33}), (\ref{3x.7}), and the first equation in
 (\ref{3.11n}), we  have
 \beq(V,Z_0)_{\LL^2({S}_{-\varepsilon},T)}&&=(V,\L_0^*Z+\C Z+\nabla\mathbf{n} Dz)_{\LL^2({S}_{-\varepsilon},T)}=(G, Z)_{\LL^2({S}_{-\varepsilon},T)}\nonumber\\
 &&\quad+(Dv-F,Dz)_{\LL^2({S}_{-\varepsilon},T)}+\int_{\Ga_{-\varepsilon}}\pounds_2p\<Z,QX\>d\Ga\nonumber\\
 &&\quad-\int_{\Ga_{b_1}}\pounds_1q\<Z,\nu\>d\Ga\nonumber\\
 &&=-(\div QF,z_1)_{\LL^2({S}_{-\varepsilon})}+(f, z_3)_{\LL^2({S}_{-\varepsilon})}\nonumber\\
 &&\quad-(F,Dz)_{\LL^2({S}_{-\varepsilon},T)}-(v,z_2)_{\LL^2({S}_{-\varepsilon})}+(Dv,Dz)_{\LL^2({S}_{-\varepsilon})}\nonumber\\
 &&\quad+\int_{\Ga_{-\varepsilon}}\pounds_2p\<Z,QX\>d\Ga-\int_{\Ga_{b_1}}\pounds_1q\<Z,\nu\>d\Ga.\nonumber\eeq
Noting that
$$-(v,z_2)_{\LL^2({S}_{-\varepsilon})}+(Dv,Dz)_{\LL^2({S}_{-\varepsilon})}=0,$$
by (\ref{3.2}) with $m=2$ and Lemma \ref{3l.3},  we obtain
\beq&&|(V,Z_0)_{\LL^2({S}_{-\varepsilon},T)}|\leq\|\div QF\|_{[\WW^{2,2}({S}_{-\varepsilon})]'}\|z_1\|_{\WW^{2,2}({S}_{-\varepsilon})}\nonumber\\
&&\quad+\|f\|_{[\WW^{2,2}({S}_{-\varepsilon})]'}\|z_3\|_{\WW^{2,2}({S}_{-\varepsilon})}+\|F\|_{[\WW^{2,2}({S}_{-\varepsilon},T)]'}\|Dz\|_{\WW^{2,2}({S}_{-\varepsilon},T)}\nonumber\\
&&\quad+C(\|p\|_{\WW^{-2,2}(\Ga_{-\varepsilon})}+\|q\|_{\WW^{-2,2}(\Ga_{b_1})})\|Z\|_{\WW^{2,2}(\pl{S}_{-\varepsilon},T)}\nonumber\\
&&\leq CI(F,f,p,q)\|Z_0\|_{\WW^{2,2}({S}_{-\varepsilon},T)}\qflq Z_0\in\Xi.\label{n3x.28}\eeq

Since $\Xi$ is a closed subspace of $\WW^{2,2}({S}_{-\varepsilon},T),$ (\ref{n3x.28}) implies the existence of an element $\B V\in\Xi$ such that
$$(V,Z_0)_{\LL^2({S}_{-\varepsilon},T)}=({\B}V,Z_0)_{\WW^{2,2}({S}_{-\varepsilon},T)}\qflq Z_0\in\Xi.$$

Let $\II$ be the canonical isomorphism from  $[\WW^{2,2}({S}_{-\varepsilon},T)]'\rw\WW^{2,2}({S}_{-\varepsilon},T)$ and set $V_0={\II}^{-1}{\B}V.$ Then $V_0\in[\WW^{2,2}({S}_{-\varepsilon},T)]'$ satisfies
$$(V-V_0,Z_0)_{\LL^2({S}_{-\varepsilon},T)}=0\qflq Z_0\in\Xi,$$ which implies
\be V-V_0\in\N_*.\label{3x.28}\ee Therefore,
$$V={\P} V+V_0\in[\WW^{2,2}({S}_{-\varepsilon},T)]',$$ where ${\P}V=V-V_0.$ By (\ref{n3x.28}), we derive
\beq\|V\|_{[\WW^{2,2}({S}_{-\varepsilon},T)]'}&&\leq\|\mathcal{P} V\|_{[\WW^{2,2}({S}_{-\varepsilon},T)]'}+\|V_0\|_{[\WW^{2,2}({S}_{-\varepsilon},T)]'}\nonumber\\
&&=\|\mathcal{P} V\|_{[\WW^{2,2}({S}_{-\varepsilon},T)]'}+\sup_{Z_0\in\WW^{2,2}({S}_{-\varepsilon},T), \|Z_0\|_{\WW^{2,2}({S}_{-\varepsilon},T)}=1}}(V,Z_0)_{\LL^2({S}_{-\varepsilon},T)\nonumber\\
&&\leq \|\mathcal{P} V\|_{[\WW^{2,2}({S}_{-\varepsilon},T)]'}+CI(F,f,p,q).\label{3x.29}\eeq

For a given $z_0\in\WW^{1,2}({S}_{-\varepsilon}),$ solve problem
$$\Delta w=z_0\qflq x\in{S}_{-\varepsilon},\quad \<Dw,\nu\>|_{\pl{S}_{-\varepsilon}}=0.$$ Then
$$\|w\|_{\WW^{3,2}({S}_{-\varepsilon})}\leq C\|z_0\|_{\WW^{1,2}({S}_{-\varepsilon})}\qflq z_0\in\WW^{1,2}({S}_{-\varepsilon}).$$
By the first equation in (\ref{3.11n}), 
\beq(v,z_0)_{\LL^2({S}_{-\varepsilon})}&&=-(Dv,Dw)_{\LL^2({S}_{-\varepsilon},T)}=-(\nabla\mathbf{n} V+F,Dw)_{\LL^2({S}_{-\varepsilon},T)}\nonumber\\
&&\leq C(\|V\|_{[\WW^{2,2}({S}_{-\varepsilon},T)]'}+\|F\|_{[\WW^{2,2}({S}_{-\varepsilon},T)]'})\|z_0\|_{\WW^{1,2}({S}_{-\varepsilon})}\nonumber\eeq for all $z_0\in\WW^{1,2}({S}_{-\varepsilon}),$ which yields
\be \|v\|_{[\WW^{1,2}({S}_{-\varepsilon})]'}\leq C(\|V\|_{[\WW^{2,2}({S}_{-\varepsilon},T)]'}+\|F\|_{[\WW^{2,2}({S}_{-\varepsilon},T)]'}).\label{n3x.31}\ee

Combining \eqref{3x.29} and \eqref{n3x.31} gives \eqref{3x.25} by density.
\end{proof}
We need the following uniqueness result.
\begin{lem}\label{lunique} Let $\b:$ $[0,1]\rw M$ be a non-characteristic curve, i.e, 
\be\Pi(\dot\b(t),\dot\b(t))\not=0\qfq t\in[0,1].\label{unique1}\ee Let  $U\subset M$ be a neighborhood of $\b$ such that $\kappa<0$ on $U.$
Suppose $v\in C^2(U)$ solves problem
\be\left\{\begin{array}{l}\div(\nabla\mathbf{n})^{-1}Dv=f_0 v\qflq p\in U,\\
v=Dv=0\qoq \b,\end{array}\right.\label{unique}\ee where $f_0\in C(U)$ is given. Then for each $p_0$ on $\b,$ there is a neighborhood $V\subset U$ of $p_0$ such that
$$v=0\qfq p\in V.$$ 
\end{lem}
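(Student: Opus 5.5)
The plan is to read the equation in \eqref{unique} as a second-order hyperbolic equation in two variables and prove uniqueness for its Cauchy problem by passing to characteristic coordinates and using a Volterra/Gronwall argument. Since $\kappa=\det\nabla\mathbf{n}<0$ on $U$, the field $(\nabla\mathbf{n})^{-1}$ is a $\CC^3$ symmetric, indefinite linear transformation. Hence the operator $v\mapsto\div(\nabla\mathbf{n})^{-1}Dv$ has principal symbol $\<(\nabla\mathbf{n})^{-1}\xi,\xi\>$, which is of Lorentzian type.

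\textbf{Step 1: identify the characteristics.} Using $\nabla\mathbf{n} Q\nabla\mathbf{n}=\ka Q$, I would show that $(\nabla\mathbf{n})^{-1}=\ka^{-1}Q^T\nabla\mathbf{n} Q$. A curve with unit tangent $e$ has conormal $Qe$, and then
$$\<(\nabla\mathbf{n})^{-1}Qe,Qe\>=\ka^{-1}\Pi(e,e)$$
up to sign. So the characteristic curves are exactly the asymptotic lines $\Pi(e,e)=0$, and \eqref{unique1} says precisely that $\b$ is non-characteristic.

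\textbf{Step 2: characteristic coordinates.} Because $\ka<0$, there are two distinct asymptotic direction fields near $p_0$; they are of class $\CC^3$ since $S$ is $\CC^5$. Their integral curves give local $\CC^3$ coordinates $(x,y)$ centred at $p_0$, with $x=$const and $y=$const both asymptotic lines. In these coordinates the equation becomes
$$v_{xy}+a v_x+b v_y=c\,v,$$
with $a,b,c$ continuous, after dividing by the nonvanishing mixed coefficient. Non-characteristicity means $\b$ is transversal to both families. Near $p_0$ it can therefore be written as a graph $y=\phi(x)$ with $\phi$ strictly monotone, and after reorienting $y$ we may take $\phi'<0$. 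The Cauchy data $v=Dv=0$ on $\b$ give $v=v_x=v_y=0$ on this graph.

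\textbf{Step 3: Gronwall on a characteristic triangle.} Fix a point $(x_0,y_0)$ close to $p_0$ on one side of $\b$. Let $\Delta$ be the characteristic triangle bounded by $x=x_0$, $y=y_0$ and $\b$. Integrating $v_{xy}=c v-av_x-bv_y$ in $y$ from the curve to $y_0$ gives an expression for $v_x(x_0,y_0)$, and integrating in $x$ gives one for $v_y(x_0,y_0)$. In both, the boundary terms vanish by the zero Cauchy data, so each of $v_x,v_y$ is an integral along a characteristic segment of $|v|+|v_x|+|v_y|$ times bounded coefficients. The value $v(x_0,y_0)$ is then obtained by integrating $v_x$ from $\b$ along $y=y_0$.

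Set $m(r)=\sup\{|v|+|v_x|+|v_y|\}$ over the part of $\Delta$ within characteristic distance $r$ of $\b$. The three identities give $m(r)\le C\int_0^r m$, and Gronwall's lemma forces $m\equiv0$. This holds on both sides of $\b$, so $v=0$ on the union of the two small characteristic triangles based on a segment of $\b$ around $p_0$, which is a neighbourhood $V$ of $p_0$.

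\textbf{Main obstacle.} I expect the main care to be needed in Step 2. One must check that $\CC^5$ regularity of $S$ makes the asymptotic line fields regular enough ($\CC^3$) for the coordinate change to keep $v\in C^2$ and the coefficients $a,b,c$ continuous. One must also choose $V$ small enough that each characteristic through a point of $V$ meets $\b$ once, transversally, inside $U$. The integral identities and the Gronwall step are routine once this geometry is in place.
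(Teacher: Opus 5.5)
Your proposal is correct and takes essentially the same route as the paper. The paper likewise computes the principal part of $\kappa\div(\nabla\mathbf{n})^{-1}Dv$ in a principal frame (equivalent to your identity $(\nabla\mathbf{n})^{-1}=\kappa^{-1}Q^T\nabla\mathbf{n}Q$), rewrites the equation in asymptotic coordinates as $w_{x_1x_2}=\hat X(w)+\hat f_0w$ with zero Cauchy data on a curve with $\gamma_1'\gamma_2'\neq0$, and shows vanishing on the two characteristic triangles $E(\gamma)$, $\hat E(\gamma)$ on either side; the only difference is that the paper cites \cite[Proposition 3.1]{Yao2017} for that vanishing, while your Volterra/Gronwall argument proves it directly and is a valid self-contained substitute.
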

\begin{proof}{\bf Step 1.}\,\,\,Fix $p\in U$ such that $\kappa(p)<0.$ Let $\{e_1,e_2\}$ be an orthonormal basis of $T_pM$ such that
$$\nabla_{e_i}\n=\lam_ie_i\qaq p\qfq i=1,\,2,$$ where $\lam_i$ are the principal curvatures. Let $\{E_1,E_2\}$ be an orthonormal frame around $p$ such that $E_i(p)=e_i.$ We compute at $p$
\beq\div(\nabla\mathbf{n})^{-1}Dv&&=E_1\<Dv,(\nabla\mathbf{n})^{-1}E_1\>+E_2\<Dv,(\nabla\mathbf{n})^{-1}E_2\>\nonumber\\
&&=\<D_{E_1}Dv,(\nabla\mathbf{n})^{-1}E_1\>+\<D_{E_2}Dv,(\nabla\mathbf{n})^{-1}E_2\>\nonumber\\
&&\quad+\mbox{the first order derivative terms}\nonumber\\
&&=\frac1{\lam_1}D^2v(E_1,E_1)+\frac1{\lam_2}D^2v(E_2,E_2)\nonumber\\
&&\quad+\mbox{the first order derivative terms}\qaq p.\nonumber\eeq It follows that
\beq\kappa\div(\nabla\mathbf{n})^{-1}Dv&&=\<D^2v,Q^*\Pi\>+X(v)\qfq p\in U,\nonumber\eeq where $Q$ is defined by (\ref{n2.32}) and $X$ is a vector field on $U.$

{\bf Step 2.}\,\,\,We shall solve (\ref{unique}) locally in asymptotic coordinate systems. Let $p_0=\b(t_0)$ for some $t_0\in(0,1).$ Let $(U_0,\psi)$ be an asymptotic coordinate system around $p_0,$ where $p_0\in U_0\subset U.$ Then
$$\Pi(\pl x_1,\pl x_1)=\Pi(\pl x_2,\pl x_2)=0,\quad \Pi^2(\pl x_1,\pl x_2)=-\kappa\det G>0\qfq p\in U_0,$$ where
$\det G=|\pl x_1\wedge\pl x_2|^2.$ Let $\psi(\b(t_0+t))=(\g_1(t),\g_2(t))$ for $t\in(-\varepsilon,\varepsilon),$ where $\varepsilon>0$ is given small. Then (\ref{unique1}) impies
$$\Pi(\dot\b(t_0+t),\dot\b(t_0+t))=2\dot\g_1(t)\dot\g_2(t)\Pi(\pl x_1,\pl x_2)\not=0,$$ that is, $\dot\g_1(t)\dot\g_2(t)\not=0$ for $t\in(-\varepsilon,\varepsilon).$ 

Define $w(x)=v\circ\psi^{-1}(x)$ for $x=(x_1,x_2)\in\R^2.$ By Step 1 and \cite[Proposition 4.1]{Yao2017}, $w$ solves problem
\be\begin{cases}w_{x_1x_2}(x)=\hat X(w)+\hat f_0w\qfq x\in\R^2,\\
w(\g(t))=w_{x_1}(\g(t))=w_{x_2}(\g(t))=0\qfq t\in(-\varepsilon,\varepsilon),\end{cases}\label{unique2}\ee where $\g(t)=(\g_1(t),\g_2(t)),$ $\hat X$ is a vector field, and $\hat f_0$ is a function on $\R^2.$ Without loss of generality, 
we assume
$$\g_1'(t)>0,\quad\g_2'(t)<0\qfq t\in(-\varepsilon,\varepsilon).$$ The other cases can be handled similarly.
By \cite[Proposition 3.1]{Yao2017}, the solution $w$ to problem (\ref{unique2}) satisfies
$$w(x)=0\qfq x\in E(\g),$$ where
$$E(\g)=\{\,(x_1,x_2)\in\R^2\,|\,\g_1(t)\leq x_1\leq\g_1(-\varepsilon),\,\,\g_2(-\varepsilon)\leq x_2\leq\g_2(\varepsilon),\,\,t\in(-\varepsilon,\varepsilon)\,\}.$$
A similar argument as for \cite[Proposition 3.1]{Yao2017} yields
$$w(x)=0\qfq x\in\hat E(\g),$$ where
$$\hat E(\g)=\{\,(x_1,x_2)\in\R^2\,|\,\g_1(\varepsilon)\leq x_1\leq\g_1(t),\,\,\g_2(-\varepsilon)\leq x_2\leq\g_2(\varepsilon),\,\,t\in(-\varepsilon,\varepsilon)\,\}.$$ 
Since the set of interior points of $E(\g)\cup\hat E(\g)$ is a neighborhood of $\psi(\b(t_0)),$  the set $U_0$ of interior points of
$\psi^{-1}[E(\g)\cup\hat E(\g)]\subset M$ is a neighborhood of $\b(t_0)$ such that $v(p)=0$ for $p\in U_0.$
\end{proof}

\begin{thm}\label{t3.4} Let $S$ be of class $\CC^5.$ Then there exists $C>0$ such that
\beq
&&\|W\|_{\LL^2({S}_{-\varepsilon},T)}
+\|w\|_{[\WW^{1,2}({S}_{-\varepsilon})]'}
+\|\div QW\|_{[\WW^{1,2}({S}_{-\varepsilon})]'}
+\|Dw-\nabla\n W\|_{[\WW^{2,2}({S}_{-\varepsilon},T)]'}
\nonumber\\
\leq &&C\Big(\|\sym DW+w\Pi\|_{\LL^2({S}_{-\varepsilon},T^2)}+\|W\|_{\LL^2(\pl{S}_{-\varepsilon},T)}
\nonumber\\
&&\qquad\qquad+\|w\|_{\WW^{-1,2}(\pl S_{-\varepsilon})}
+\|Dw-\nabla\n W\|_{\WW^{-2,2}(\Ga_{-\varepsilon})}
\Big)\quad\qquad\label{3.26}\eeq
holds for all $y=W+w\mathbf{n}\in \WW^{2,2}(S,\R^3)$.
\end{thm}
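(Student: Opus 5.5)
We use Theorem \ref{t2.1} to pass from $(w,W)$ to the auxiliary fields $(v,V)$, apply Theorem \ref{t3.3} on $S_{-\varepsilon}$, and then remove the projection term $\P V$ by a compactness--uniqueness argument.

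\emph{Estimating the right-hand side of \eqref{3x.25}.} Let $y=W+w\n\in\WW^{2,2}(S,\R^3)$ and put $U=\sym DW+w\Pi$. Define $v=\frac12\div QW$ and $V$ by \eqref{V2.29}. By \eqref{1.25}, $QV=\nabla\n W-Dw$, so $\|V\|$ and $\|Dw-\nabla\n W\|$ agree in every norm. By \eqref{1.26},
\[
F=Dv-\nabla\n V=Q\div QUQ,\qquad f=\div V+(\tr\Pi)v=-\<Q\nabla\n,U\>.
\]
Lemma \ref{3l.3} gives $\|\div QF\|_{[\WW^{2,2}]'}\le C\|U\|_{\LL^2}$, since $\div QF$ is, up to sign and zero-order terms, $\div\div$ of a rotated copy of $U$. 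Integrating by parts once gives $\|F\|_{[\WW^{2,2}]'}+\|f\|_{[\WW^{2,2}]'}\le C\|U\|_{\LL^2}$. The boundary data satisfy $p=\<V,QX\>|_{\Ga_{-\varepsilon}}$, which is controlled by $\|Dw-\nabla\n W\|_{\WW^{-2,2}(\Ga_{-\varepsilon})}$, and $q=\<V,\nu\>|_{\Ga_{b_1}}=\pm\<\nabla\n W-Dw,\,Q\nu\>$. Along $\Ga_{b_1}$ the term $\<Dw,Q\nu\>$ is a tangential derivative of $w$, so $\|q\|_{\WW^{-2,2}}\le C(\|w\|_{\WW^{-1,2}(\Ga_{b_1})}+\|W\|_{\LL^2(\Ga_{b_1})})$. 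Theorem \ref{t3.3} then bounds $\|V\|_{[\WW^{2,2}]'}+\|v\|_{[\WW^{1,2}]'}$ by $\|\P V\|$ plus the right-hand side $R$ of \eqref{3.26}.

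\emph{Recovering $W$ and $w$.} Since $\sym DW=U-w\Pi$, Lemma \ref{l3.1} on $S_{-\varepsilon}$ gives
\[
\|W\|_{\LL^2}\le C\big(\|U\|_{\LL^2}+\|w\Pi\|_{[\WW^{1,2}]'}+\|W\|_{\LL^2(\pl S_{-\varepsilon})}\big).
\]
This needs $\|w\|_{[\WW^{1,2}]'}$. From $Dw=\nabla\n W-QV$ and the duality argument of Lemma \ref{3l.2}, with boundary term $\|w\|_{\WW^{-1,2}(\pl S_{-\varepsilon})}$, we get $\|w\|_{[\WW^{1,2}]'}\le C(\|W\|_{[\WW^{2,2}]'}+\|V\|_{[\WW^{2,2}]'}+\|w\|_{\WW^{-1,2}(\pl S_{-\varepsilon})})$. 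The term $\|W\|_{[\WW^{2,2}]'}$ is a compact lower-order term. So all quantities on the left of \eqref{3.26} are bounded by
\[
C\big(R+\|\P V\|_{[\WW^{2,2}]'}+\|W\|_{[\WW^{2,2}]'}\big).
\]
Both extra terms are compact relative to the left-hand norms: $\P$ has finite rank, and $\LL^2\hookrightarrow[\WW^{2,2}]'$ is compact.

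\emph{Compactness--uniqueness.} Suppose \eqref{3.26} fails. Take $y_n$ with left side equal to $1$ and $R(y_n)\to0$. Pass to a subsequence with $\P V_n$ and $W_n$ converging (in $[\WW^{2,2}]'$). The estimate above applied to differences makes $(W_n,w_n,v_n,V_n)$ Cauchy, so it converges to $(W_0,w_0,v_0,V_0)$ with left-hand norm $1$. The limit satisfies
\[
\sym DW_0+w_0\Pi=0\ \text{on } S_{-\varepsilon},\qquad W_0=0 \text{ and } w_0=0 \text{ on } \pl S_{-\varepsilon},\qquad \nabla\n W_0-Dw_0=0 \text{ on } \Ga_{-\varepsilon}.
\]
It remains to show $W_0=0$ and $w_0=0$; this is the main obstacle. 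By \eqref{1.26} with $U=0$, $Dv_0=\nabla\n V_0$ and $\div V_0=-(\tr\Pi)v_0$. On the hyperbolic strip $-\varepsilon<s<0$, eliminating $V_0=(\nabla\n)^{-1}Dv_0$ gives
\[
\div(\nabla\n)^{-1}Dv_0=-(\tr\Pi)v_0 .
\]
This is a second-order hyperbolic equation, and at least formally $v_0=Dv_0=0$ on $\Ga_{-\varepsilon}$, which is non-characteristic by \eqref{Pi2.801}. Lemma \ref{lunique} then gives $v_0=0$ near $\Ga_{-\varepsilon}$, hence $V_0=0$ there. The main difficulty is to propagate this across $\Ga_0$ into $S^+$, where $\kappa>0$ makes the system elliptic. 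My first attempt would use the regularity from Theorem \ref{3t.2}(ii) to obtain smoothness of the limit, then invoke unique continuation for the elliptic system $Dv_0=\nabla\n V_0$, $\div V_0=-(\tr\Pi)v_0$ in $S^+$ (together with the vanishing data on $\Ga_{b_1}$), and handle the degenerate interface $\Ga_0$ through the non-degeneracy $D\kappa\neq0$. Once $v_0=0$ and $V_0=0$, the relations $Dw_0=\nabla\n W_0$ and $\div QW_0=0$ together with $\sym DW_0+w_0\Pi=0$ and the zero boundary data force $W_0=0$ and $w_0=0$ by Lemma \ref{l3.1}, contradicting the normalization.
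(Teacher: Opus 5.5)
Your a priori estimate is sound and differs harmlessly from the paper. Instead of applying Theorem~\ref{t3.3} a second time to the $(w,W)$ system, which produces a $\mathcal{P}W$ term, you recover $\|w\|_{[\WW^{1,2}]'}$ from $Dw=\nabla\n W-QV$ by the duality argument of Lemma~\ref{3l.2}. That costs only the compact term $\|W\|_{[\WW^{2,2}]'}$, and the Cauchy-sequence part of your compactness argument goes through.

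The gap is the step you flag as the main obstacle, which is left open. The limit $(W_0,w_0,v_0,V_0)$ lies only in $\LL^2\times[\WW^{1,2}]'\times[\WW^{1,2}]'\times[\WW^{2,2}]'$. So the boundary conditions you list, the Cauchy data on $\Ga_{-\varepsilon}$, Lemma~\ref{lunique} (which needs $C^2$) and elliptic unique continuation are not yet available. On the hyperbolic strip there is no local regularity gain, so regularity must come from the boundary value problem for $\L_0$; saying you would ``use Theorem~\ref{3t.2}(ii)'' does not supply it. The missing step has two parts:
\begin{itemize}
\item Pass to the limit in $(\tilde\L_0\tilde V_n,W)=(\tilde V_n,\L_0^*W)$ for test fields $W$ that need not vanish on $\pl S_{-\varepsilon}$. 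The boundary terms die because $\|W_n\|_{\LL^2(\pl S_{-\varepsilon},T)}+\|w_n\|_{\WW^{-1,2}(\pl S_{-\varepsilon})}\to0$. This gives $(V_0,\L_0^*W)=(F_0,W)$ for all $W\in\WW^{2,2}(S_{-\varepsilon},T)$, with $F_0=-e^{-\g\kappa}\rho v_0\nabla\n X-\C V_0$. This is why $\L_0$ is built from $\div Q\nabla\n V$ and $\div V$, which become $0$ and $-\rho v_0$ in the limit.
\item Test with $W={\L_0^*}^{-1}Z$ and use Theorem~\ref{3t.2}(ii) to see that $\C W$ gains one derivative. This gives $V_0\in[\WW^{1,2}(S_{-\varepsilon},T)]'$, and bootstrapping then gives $C^2$.
\end{itemize}

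The uniqueness also does not require propagating anything across $\Ga_0$, and $D\kappa\neq0$ plays no role there. Your proposed handling of the interface is not an argument, and crossing a parabolic line is exactly what one wants to avoid. Instead, obtain zero Cauchy data on each boundary curve separately.
\begin{itemize}
\item On $\Ga_{b_1}$, take $\tau$ tangent and $\nu=Q\tau$. The conditions $W_0=w_0=0$ and $\sym DW_0+w_0\Pi=0$ give $2v_0=\div QW_0=\langle D_\nu W_0,\tau\rangle+\langle D_\tau W_0,\nu\rangle=-2w_0\Pi(\tau,\nu)=0$. Next, $\langle V_0,\nu\rangle=0$ follows from $QV_0=\nabla\n W_0-Dw_0$, since $Q\nu$ is tangent to $\Ga_{b_1}$. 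Together with $V_0=(\nabla\n)^{-1}Dv_0$ this gives $\pl_\nu v_0\,\langle(\nabla\n)^{-1}\nu,\nu\rangle=0$, and $\langle(\nabla\n)^{-1}\nu,\nu\rangle\neq0$ because $\kappa>0$ there. Elliptic unique continuation then gives $v_0=0$ on all of $\{\kappa>0\}$.
\item On $\Ga_{-\varepsilon}$, the same computation and $V_0=0$ give $v_0=Dv_0=0$. Lemma~\ref{lunique}, applied successively along the non-characteristic curves $\a(\cdot,s)$, $-\varepsilon\le s<0$, gives $v_0=0$ on $\{\kappa<0\}\cap S_{-\varepsilon}$.
\item Continuity covers $\Ga_0$.
\end{itemize}

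Finally, Lemma~\ref{l3.1} does not force $W_0=0$: it only yields $\|W_0\|_{\LL^2}\le C\|w_0\Pi\|_{[\WW^{1,2}]'}$. Use instead that $v_0=0$ makes $DW_0$ symmetric. Then $DW_0+w_0\Pi=0$ and $Dw_0-\nabla\n W_0=-QV_0=0$, i.e.\ $\nabla y_0=0$, so $y_0$ is a constant that vanishes on $\pl S_{-\varepsilon}$. Alternatively, rerun the unique continuation for $\div(\nabla\n)^{-1}Dw_0=-\rho w_0$ with $w_0=Dw_0=0$ on $\pl S_{-\varepsilon}$, as the paper does.
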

\begin{proof}Given $y=W+w\n\in C^3(S,\R^3)$, let $U=\sym DW+w\Pi$.  By Theorem \ref{t2.1}, pairs $(w,W)$ and \((v, V)\) satisfy the following systems pointwise on ${S}_{-\varepsilon}$
\be v=\frac12\div QW,\label{n3xx.33}\ee
\be\left\{\begin{array}{l}Dw=\nabla\mathbf{n} W-QV,\\
\div W=-\rho w+\tr U,\end{array}\right.\label{3n.34}\ee and
\be\left\{\begin{array}{l}Dv=\nabla\mathbf{n} V+Q\div QUQ,\\
\div V=-\rho v-\<Q\nabla\mathbf{n},U\>.\end{array}\right.\label{n3x.34}\ee 
 It follows that
$$\div Q\nabla\mathbf{n} W=-\div V=\rho v-\<Q\nabla\mathbf{n},U\>.$$
Lemma \ref{l3.1} implies
\beq\|W\|_{\LL^2({S}_{-\varepsilon},T)}&&\leq C(\|\sym DW\|_{[\WW^{1,2}({S}_{-\varepsilon},T^2)]'}+\|W\|_{\LL^2(\pl{S}_{-\varepsilon},T)})\nonumber\\
&&\leq C(\|U\|_{[\WW^{1,2}({S}_{-\varepsilon},T^2)]'}+\|w\|_{[\WW^{1,2}({S}_{-\varepsilon})]'}+\|W\|_{\LL^2(\pl{S}_{-\varepsilon},T)}).\label{l3.42}
\eeq
Applying Theorem \ref{t3.3} to problem (\ref{3n.34}) with
$$F=-QV,\quad f=\tr U,\quad p=\<W,QX\>|_{\Ga_{-\varepsilon}},\quad q=\<W,\nu\>|_{\Ga_{b_1}},$$ we obtain
\beq
\|w\|_{[\WW^{1,2}({S}_{-\varepsilon})]'}&&\leq C\big(\|{\P}W\|_{[\WW^{2,2}({S}_{-\varepsilon},T)]'}+\|\div V\|_{[\WW^{2,2}({S}_{-\varepsilon})]'}+\|V\|_{[\WW^{2,2}({S}_{-\varepsilon},T)]'}\nonumber\\
&&\quad
+\|\tr U\|_{[\WW^{2,2}({S}_{-\varepsilon})]'}
+\|\<W,QX\>\|_{\WW^{-2,2}(\Ga_{-\varepsilon})}+\|\<W,\nu\>\|_{\WW^{-2,2}(\Ga_{b_1})}\big).\qquad\quad
\label{3x.35}\eeq
Applying Theorem \ref{t3.3} to problem (\ref{n3x.34}) with
$$F=Q\div QUQ,\quad f=-\<Q\nabla\mathbf{n},U\>,\quad p=\<V,QX\>,\quad q=\<V,\nu\>,$$
 yields
\beq&&\|\div V\|_{[\WW^{2,2}({S}_{-\varepsilon})]'}+\|V\|_{[\WW^{2,2}({S}_{-\varepsilon},T)]'}\nonumber\\
&&\leq \|\<Q\nabla\mathbf{n},U\>\|_{[\WW^{2,2}({S}_{-\varepsilon})]'}+C(\|V\|_{[\WW^{2,2}({S}_{-\varepsilon},T)]'}+\|v\|_{[\WW^{1,2}({S}_{-\varepsilon})]'})\nonumber\\
&&\leq C(\|{\P}V\|_{[\WW^{2,2}({S}_{-\varepsilon},T)]'}+\|U\|_{\LL^2({S}_{-\varepsilon},T^2)}\nonumber\\
&&\quad+\|\<V,QX\>\|_{\WW^{-2,2}(\Ga_{-\varepsilon})}+\|\<V,\nu\>\|_{\WW^{-2,2}(\Ga_{b_1})}),\label{3x.36}\eeq where Lemma \ref{3l.3} is used. 
By (\ref{n3xx.33}) and the first equation in \eqref{3n.34}, we have
\beq
&&\|v\|_{[\WW^{1,2}({S}_{-\varepsilon})]'}
+\|V\|_{[\WW^{2,2}({S}_{-\varepsilon},T)]'}
\nonumber\\
\le&& C\big(\|W\|_{\LL^2({S}_{-\varepsilon},T)}
+\|w\|_{[\WW^{1,2}({S}_{-\varepsilon})]'}
+\|W\|_{\LL^2(\pl{S}_{-\varepsilon},T)}
+\|w\|_{\WW^{-1,2}(\pl{S}_{-\varepsilon})}
\big),
\label{v0V0}
\eeq
and 
\beq
\|\<V,\nu\>\|_{\WW^{-2,2}(\Ga_{b_1})}
\le C\big(\|w\|_{\WW^{-1,2}(\Ga_{b_1})}
+\|W\|_{\WW^{-2,2}(\Ga_{b_1})}\big),
\label{tangentGa}
\eeq
since $Q\nu|_{\Ga_{b_1}}$ is tangent to curve $\Ga_{b_1}$. 
Substituting \eqref{3x.35}--(\ref{tangentGa}) into \eqref{l3.42}, we derive
\beq
&&\|W\|_{\LL^2({S}_{-\varepsilon},T)}
+\|w\|_{[\WW^{1,2}({S}_{-\varepsilon})]'}
+\|v\|_{[\WW^{1,2}({S}_{-\varepsilon})]'}
+\|V\|_{[\WW^{2,2}({S}_{-\varepsilon},T)]'}
\nonumber\\
\leq &&C\Big(\|U\|_{\LL^2({S}_{-\varepsilon},T^2)}+\|W\|_{\LL^2(\pl{S}_{-\varepsilon},T)}
+\|w\|_{\WW^{-1,2}(\pl{S}_{-\varepsilon})}
+\|\<V,QX\>\|_{\WW^{-2,2}(\Ga_{-\varepsilon})}
\nonumber\\
&&\quad+\|{\P}W\|_{[\WW^{2,2}({S}_{-\varepsilon},T)]'}
+\|{\P}V\|_{[\WW^{2,2}({S}_{-\varepsilon},T)]'}
\Big).\quad\label{n3x.37}\eeq
Let $\Y(S)$ be the closure of $C^3(S,\R^3)$ under norm 
\beq
\|y\|^2_{\Y(S)}
=&&\|W\|^2_{\LL^2({S}_{-\varepsilon},T)}
+\|w\|^2_{[\WW^{1,2}({S}_{-\varepsilon})]'}
+\|v\|^2_{[\WW^{1,2}({S}_{-\varepsilon})]'}
+\|V\|^2_{[\WW^{2,2}({S}_{-\varepsilon},T)]'}
\nonumber\\
&&+\|U\|^2_{\LL^2({S}_{-\varepsilon},T^2)}
+\|W\|^2_{\LL^2(\pl{S}_{-\varepsilon},T)}
+\|w\|^2_{\WW^{-1,2}(\pl{S}_{-\varepsilon})}
+\|V\|^2_{\WW^{-2,2}(\Ga_{-\varepsilon})}
,
\nonumber
\eeq
where norms $\|{\P}W\|_{[\WW^{2,2}({S}_{-\varepsilon},T)]'}$ and $\|{\P}Q(Dw-\nabla\mathbf{n} W)\|_{[\WW^{2,2}({S}_{-\varepsilon},T)]'}$ can be omitted by the definition (\ref{defP}) of $\P$. By density, (\ref{n3x.37}) holds for all $y\in\Y(S)$. 

We claim that the terms $\|{\P}W\|_{[\WW^{2,2}({S}_{-\varepsilon},T)]'}$ and $\|{\P}Q(Dw-\nabla\mathbf{n} W)\|_{[\WW^{2,2}({S}_{-\varepsilon},T)]'}$ can be removed from (\ref{n3x.37}) via a compactness-uniqueness argument to obtain (\ref{3.26}). 
 Assume for contradiction that it fails. Then there exist sequences $y_n=W_n+w_n\n\in\Y(S)$ such that $w_n,$ $W_n,$ $v_n,$ $V_n,$ and $U_n$ satisfy \eqref{n3xx.33}--\eqref{n3x.34} in the sense of generalized tensors and 
\beq
1=&&\|W_n\|_{\LL^2({S}_{-\varepsilon},T)}+\|w_n\|_{[\WW^{1,2}(S_{-\varepsilon})]'}
+\|v_n\|_{[\WW^{1,2}({S}_{-\varepsilon})]'}
+\|V_n\|_{[\WW^{2,2}({S}_{-\varepsilon},T)]'}
\nonumber\\
\geq&& n\big(\|U_n\|_{\LL^2({S}_{-\varepsilon},T^2)}+\|W_n\|_{\LL^2(\pl{S}_{-\varepsilon},T)}
+\|w_n\|_{\WW^{-1,2}(\pl{S}_{-\varepsilon})}
+\|V_n\|_{\WW^{-2,2}(\Ga_{-\varepsilon})}\big)
\qquad\label{l3.45}\eeq
 for all $n\in\mathbb{N}^+.$
By (\ref{defP}), let 
\[
{\P}W_n=\sum_{j=1}^ka_{nj}\Phi_j\quad\mbox{and}\quad
{\P}V_n=\sum_{j=1}^kb_{nj}\Phi_j.
\]
 Then $\{(a_{n1},\dots,a_{nk})\}_{n=1}^\infty$ and $\{(b_{n1},\dots,b_{nk})\}_{n=1}^\infty$ is bounded in $\R^k$ by (\ref{defP}) and (\ref{l3.45}).  Extracting a subsequence not relabeled such that $\{(a_{n1},\dots,a_{nk})\}_{n=1}^\infty$ and $\{(b_{n1},\dots,b_{nk})\}_{n=1}^\infty$ converge in $\R^k$, we obtain $y_n$ is a Cauchy series in $\Y(S)$ by (\ref{n3x.37}). Hence, $y_n\to y_0$ in $\Y(S)$ for some $y_0=W_0+w_0\n\in\Y(S)$, as $n\to \infty$. By density, there exists a Cauchy series still denoted by $y_n=W_n+w_n\n\in C^3(S_{-\varepsilon},\R^3)$ converging to $y_0$ in $\Y(S)$. Therefore, there exists $v_0\in [\WW^{1,2}(S_{-\varepsilon})]'$, $V_0\in [\WW^{2,2}(S_{-\varepsilon},T)]'$ such that 
\beq
&&
\|W_n-W_0\|^2_{\LL^2({S}_{-\varepsilon},T)}
+\|w_n-w_0\|^2_{[\WW^{1,2}({S}_{-\varepsilon})]'}
+\|\div QW_n-2v_0\|_{[\WW^{1,2}({S}_{-\varepsilon})]'}
\nonumber\\
&&\quad+\|Dw_n-\nabla\n W_n-QV_0\|_{[\WW^{2,2}({S}_{-\varepsilon},T)]'}
+\|\sym DW_n+w_n\Pi\|_{\LL^2({S}_{-\varepsilon},T^2)}
\nonumber\\
&&\qquad\quad
+\|W_n\|_{\LL^2(\pl{S}_{-\varepsilon},T)}
+\|w_n\|_{\WW^{-1,2}(\pl{S}_{-\varepsilon})}
+\|Dw_n-\nabla\n W_n\|_{\WW^{-2,2}(\Ga_{-\varepsilon})}
\to 0,\qquad\qquad
\label{Cauchy}
\eeq
as $n\to \infty$. By Theorem \ref{t2.1}, the following holds on $S_{-\varepsilon}$ in the sense of generalized tensors 
$$v_0=\frac12\div QW_0,\qquad\sym DW_0+w_0\Pi=0,$$ 
\be \left\{\begin{array}{l}
Dw_0=\nabla\mathbf{n} W_0-QV_0,\\
\div W_0=-\rho w_0,\end{array}\right.\label{n3x.41}\ee
and
\be\left\{\begin{array}{l}Dv_0=\nabla\mathbf{n} V_0,\\
\div V_0=-\rho v_0.\end{array}\right.\label{n3x.40}\ee

To derive contradictions, we first derive an equation about $(v_0,V_0)$ holding in weak sense, then improve the regularity of $(w_0,W_0,v_0,V_0)$ and finally apply the uniqueness of solutions to show that they must vanish. 

For consistency, we still use notation $(\cdot,\cdot)_{\LL^2(M)}$ to represent the acting of generalized tensors on $M$ on the corresponding testing functions. 
\\
{\bf Step 1.} Claim: $v_0\in [\WW^{1,2}(S_{-\varepsilon})]'$, $V_0\in [\WW^{2,2}(S_{-\varepsilon},T)]'$ satisfy 
$\L_0V_0=F_0$ on $S_{-\varepsilon}$ 
in weak sense, i.e. $(V_0,\L_0^*W)_{\LL^2(S_{-\varepsilon})}=(F_0,W)_{\LL^2(S_{-\varepsilon})}$ holds for all $W\in \WW^{2,2}(S_{-\varepsilon},T)$, where 
\[
F_0=-e^{-\g\kappa}\rho v_0\nabla\mathbf{n} X-\C V_0.
\]

To prove the claim, let $\tilde y_n=\tilde W_n+\tilde w_n\n$ be the extension of $y_n$ by zero outside $S_{-\varepsilon}$  and $\tilde v_n,\tilde V_n,\tilde U_n$ are generalized tensors on $M$ defined by (\ref{2.28}), (\ref{V2.29}) and (\ref{U2.31}), respectively. Then there holds on $M$ in the sense of generalized tensors 
\be \tilde v_n=\frac12\div Q\tilde W_n,\quad
\tilde U_n=\sym D\tilde W_n+\tilde w_n\Pi\label{a3.48}\ee
\be\left\{\begin{array}{l}D\tilde w_n=\nabla\mathbf{n} \tilde W_n
-Q\tilde V_n,\\
\div \tilde W_n=-\rho \tilde w_n+\tr \tilde U_n,\end{array}\right.\label{a3.34}\ee and
\be\left\{\begin{array}{l}D\tilde v_n
=\nabla\mathbf{n}\tilde V_n+Q\div Q\tilde U_nQ,\\
\div \tilde V_n=-\rho \tilde v_n-\<Q\nabla\mathbf{n},\tilde U_n\>.\end{array}\right.\label{a3.49}
\ee 
Fix $W\in C^3_0(M,T)$. 
Then by (\ref{x3.1}), (\ref{n3.2}) and the definitions of generalized derivatives, one can easily derive 
\beq
(\tilde\L_0\tilde V_n,W)_{\LL^2(M)}=(\tilde V_n,\L_0^*W)_{\LL^2(M)},\label{a3.52}
\eeq
where $\tilde\L_0$ on the left is the generalized version of operator $\L_0$. (\ref{a3.34}) yield 
\beq
(\tilde V_n,\L_0^*W)_{\LL^2(M)}
=&&(-D\tilde w_n+\nabla\mathbf{n} \tilde W_n,Q\L_0^*W)_{\LL^2(M)}
\nonumber\\
=&&(\tilde w_n,\div Q\L_0^*W)_{\LL^2(M)}
+(\nabla\mathbf{n} \tilde W_n,Q\L_0^*W)_{\LL^2(M)}
\nonumber\\
=&&\int_{S_{-\varepsilon}}w_n\div Q\L_0^*W
+\<\nabla\mathbf{n} W_n,Q\L_0^*W\>dg
\nonumber\\
=&&\int_{\partial S_{-\varepsilon}}w_n\<Q\L_0^*W,\nu\>
+\int_{S_{-\varepsilon}}\<Q(Dw_n-\nabla\mathbf{n} W_n),\L_0^*W\>dg.
\nonumber
\eeq
(\ref{Cauchy}) yields 
\beq
\lim_{n\to\infty}(\tilde V_n,\L_0^*W)_{\LL^2(M)}
=(V_0,\L_0^*W)_{\LL^2(S_{-\varepsilon})}.\label{a3.53}
\eeq
On the other hand, by (\ref{x3.1}), (\ref{a3.48}), (\ref{a3.34}) and (\ref{a3.49}), we have 
\beq
&&(\tilde\L_0\tilde V_n,W)_{\LL^2(M)}\nonumber\\
=&&(e^{-\g\kappa}[(\div \div Q\tilde U_nQ)QX
-(\frac12\rho \div Q\tilde W_n+\<Q\nabla\mathbf{n},\tilde U_n\>)\nabla\mathbf{n} X]-\C \tilde V_n,W)_{\LL^2(M)}
\nonumber\\
=&&(\sym D\tilde W_n+\tilde w_n\Pi,P)_{\LL^2(M)}
+\frac12(Q\tilde W_n,D\xi)_{\LL^2(M)}
+(D\tilde w_n-\nabla\mathbf{n} \tilde W_n,Q\C W)_{\LL^2(M)}
\nonumber\\
=&&
-(\tilde W_n,\div\sym P+\frac12QD\xi
+\nabla\mathbf{n}Q\C W)_{\LL^2(M)}
+(\tilde w_n,\<\Pi,P\>-\div Q\C W)_{\LL^2(M)}
\nonumber\\
=&&
\int_{S_{-\varepsilon}}
-\<W_n,\div\sym P+\frac12QD\xi
+\nabla\mathbf{n}Q\C W\>
+w_n(\<\Pi,P\>-\div Q\C W)dg
\nonumber\\
=&&
\int_{\pl S_{-\varepsilon}}
\<-(\sym P)W_n
+\frac12\xi QW_n
-w_nQ\C W,\nu\>d\Ga
\nonumber\\
&&
+\int_{S_{-\varepsilon}}
\<\sym DW_n+w_n\Pi,P\>
-\frac12\xi\div QW_n
-\<Q(Dw_n-\nabla\mathbf{n}W_n),\C W\>
dg,
\nonumber
\eeq
where 
\beq
P
=&&QD^2[e^{-\g\kappa}\<W,QX\>]Q-e^{-\g\kappa}\<W,\nabla\mathbf{n} X\>Q\nabla\mathbf{n},
\nonumber\\
\xi
=&&e^{-\g\kappa}\rho \<W,\nabla\mathbf{n} X\>.
\nonumber
\eeq
(\ref{Cauchy})  yields for all $W\in C^3_0(M,T)$, there holds 
\beq
\lim_{n\to\infty}(\tilde\L_0\tilde V_n,W)_{\LL^2(M)}
=&&-(v_0,\xi)_{L^2(S_{-\varepsilon})}
-(V_0,\C W)_{L^2(S_{-\varepsilon})}
=(F_0,W)_{L^2(S_{-\varepsilon})}.
\label{a3.54}
\eeq
Therefore, by $S\subset\subset M$ and density, we conclude from (\ref{a3.52})--(\ref{a3.54}) that 
\[
(V_0,\L_0^*W)_{\LL^2(S_{-\varepsilon})}
=(F_0,W)_{L^2(S_{-\varepsilon})}\quad\mbox{holds for all}\quad W\in \WW^{2,2}(S_{-\varepsilon},T).
\]
{\bf Step 2.} Claim: $(w_0,W_0,v_0,V_0)$ is of $C^2$ on $S_{-\varepsilon}$. 

For any $Z\in C^2(S_{-\varepsilon},T)$, Theorem \ref{3t.2} implies that $W={\L_0^*}^{-1}Z\in \WW^{2,2}(S_{-\varepsilon},T)$ satisfies $\<W,\nu\>|_{\Ga_{-\varepsilon}}=\<W,QX\>|_{\Ga_{b_1}}=0$ and 
\[\|W\|_{\WW^{1,2}({S}_{-\varepsilon},T)}\leq C\|Z\|_{\WW^{1,2}({S}_{-\varepsilon},T)}.
\]
By Theorem \ref{3t.2} again, $L^*_0(\C W)=\T_1 L^*_0W+\T_2 W=\T_1 Z+\T_2 W$ implies
\beq
\|\C W\|_{\WW^{2,2}({S}_{-\varepsilon},T)}
=&&\|(L^*_0)^{-1}(\T_1 Z+\T_2 W)\|_{\WW^{2,2}({S}_{-\varepsilon},T)}\nonumber\\
\le&& C\|\T_1 Z+\T_2 W\|_{\WW^{1,2}({S}_{-\varepsilon},T)}
\le C\|Z\|_{\WW^{1,2}({S}_{-\varepsilon},T)},
\nonumber
\eeq
where $\T_1,\T_2\in C^2(S_{-\varepsilon},T^2)$ are linear transformation fields with supports in $S_{\varepsilon/2}$. 
Then 
\beq 
(V_0,Z)_{\LL^2(S_{-\varepsilon})}
=&&(V_0,\L_0^*W)_{\LL^2(S_{-\varepsilon})}
=(F_0,W)_{\LL^2(S_{-\varepsilon})}
\nonumber\\
=&&-(v_0,e^{-\g\kappa}\rho \<W,\nabla\mathbf{n} X\>)_{\LL^2(S_{-\varepsilon})}
-(V_0,\C W)_{\LL^2(S_{-\varepsilon},T)}. 
\nonumber
\eeq
Therefore, for any $Z\in C^2(S_{-\varepsilon},T)$, we have 
\beq 
&&|(V_0,Z)_{\LL^2(S_{-\varepsilon})}|\nonumber\\
\le&&
C\|v_0\|_{[\WW^{1,2}({S}_{-\varepsilon},T)]'}
\|W\|_{\WW^{1,2}({S}_{-\varepsilon},T)}
+C\|V_0\|_{[\WW^{2,2}({S}_{-\varepsilon},T)]'}
\|\C W\|_{\WW^{2,2}({S}_{-\varepsilon},T)}
\nonumber\\
\le&&
C\big(\|v_0\|_{[\WW^{1,2}({S}_{-\varepsilon},T)]'}
+\|V_0\|_{[\WW^{2,2}({S}_{-\varepsilon},T)]'}\big)
\|Z\|_{\WW^{1,2}({S}_{-\varepsilon},T)}.
\nonumber
\eeq
By density, we obtain $V_0\in [\WW^{1,2}({S}_{-\varepsilon},T)]'$ and then $v_0\in \LL^2({S}_{-\varepsilon})$ by (\ref{n3x.40}). By bootstrapping, we conclude $(v_0,V_0)$ is of $C^2$ on $S_{-\varepsilon}$. 
So is $(w_0,W_0)$ by (\ref{n3x.41}) and a similar argument.  Furthermore, by (\ref{Cauchy}) and (\ref{tangentGa}), we have 
\[ W_0=0,\quad w_0=0\quad \mbox{at}\quad \partial S_{-\varepsilon},
\]
and 
\beq
V_0=0\quad \mbox{at}\quad \Ga_{-\varepsilon},\quad
\<V_0,\nu\>=0\quad \mbox{at}\quad \Ga_{b_1}.\label{V0bd}
\eeq
{\bf Step 3.} Uniqueness of $C^2$ solutions. 

On $\Ga_{b_1},$ let $\tau=\a_t/|\a_t|$ and $\nu=Q\tau.$ $\{\tau,\nu\}$  forms a positively oriented orthonormal basis. We derive 
\beq2v_0&&=\div QW_0=\<D_\nu W_0,\tau\>-\<D_\tau W_0,\nu\>=\<D_\nu W_0,\tau\>+\<D_\tau W_0,\nu\>\nonumber\\
&&=-2w_0\Pi(\tau,\nu)=0\qflq x\in\Ga_{b_1}.\nonumber\eeq
Note that $\<(\nabla\n)^{-1}\nu,\nu\>>0$ at $\Ga_{b_1}$. It follows from (\ref{V0bd}) and (\ref{n3x.40}) that
\be\left\{\begin{array}{l}\div(\nabla\mathbf{n})^{-1}Dv_0=-\rho v_0\qflq x\in {S}_{-\varepsilon},\\
v_0=Dv_0=0\qflq x\in\Ga_{b_1}.\end{array}\right.\label{3.43}\ee
Since the surface $S$ is assumed to be of class $C^5,$ the coefficients of the principal part to problem (\ref{3.43}) are of class $\CC^3.$ It follows from the uniqueness for elliptic problems in \cite{Ar} that 
$$v_0 = 0\qflq x\in S_{-\varepsilon}\quad\mbox{with}\quad \kappa(x) > 0.$$
Similarly,
$$v_0=0\qflq x\in\Ga_{-\varepsilon}.$$ Then the system (\ref{3.43}) becomes
$$\left\{\begin{array}{l}\div(\nabla\mathbf{n})^{-1}Dv_0=-\rho v_0\qflq x\in {S}_{-\varepsilon},\quad\kappa(x)<0,\\
v_0=Dv_0=0\qflq x\in\Ga_{-\varepsilon}.\end{array}\right.$$
Since the curves
$$\a(\cdot,s)\qflq -\varepsilon\leq s\leq 0,$$ are not characteristic, Lemma \ref{lunique} yields
$$v_0=0\qflq x\in{S}_{-\varepsilon},\quad \kappa(x)<0.$$
Thus,
$$v_0=V_0=0\qflq x\in{S}_{-\varepsilon}.$$ Moreover, by (\ref{n3x.41}), we derive
$$\left\{\begin{array}{l}\div(\nabla\mathbf{n})^{-1}Dw_0=-\rho w_0\qflq x\in {S}_{-\varepsilon},\\
w_0=Dw_0=0\qflq x\in\pl{S}_{-\varepsilon}.\end{array}\right.$$
By a similar argument as for $(v_0,V_0),$ we obtain
$$w_0=W_0=0\qflq x\in{S}_{-\varepsilon},$$ which contradicts
\[\|W_0\|_{{\LL^2({S}_{-\varepsilon},T)}}
+\|w_0\|_{[\WW^{1,2}(S_{-\varepsilon})]'}
+\|v_0\|_{[\WW^{1,2}({S}_{-\varepsilon})]'}
+\|V_0\|_{[\WW^{2,2}({S}_{-\varepsilon},T)]'}
=1.
\]

\end{proof}

{\bf Proof of Theorem \ref{t1.1}.}\,\,\,Fix $\varphi\in C^1(S)$ such that $\supp \varphi\subset\subset S_{-\varepsilon}$ and $\varphi|_{S_{-\varepsilon/2}}=1$. For any $y=W+w\mathbf{n}\in C^2(S,\R^3)$, Theorem \ref{t3.4} yields 
\beq
&&\|\varphi W\|_{\LL^2({S}_{-\varepsilon},T)}
+\|\varphi w\|_{[\WW^{1,2}({S}_{-\varepsilon})]'}
\nonumber\\
\leq &&C\Big(\|\sym D(\varphi W)+\varphi w\Pi\|_{\LL^2({S}_{-\varepsilon},T^2)}
+\|W\|_{\LL^2(\Ga_{b_1},T)}
+\|w\|_{\WW^{-1,2}(\Ga_{b_1})}
\Big)\nonumber\\
\leq &&C\Big(\|U\|_{\LL^2(S,T^2)}
+\|W\|_{\LL^2({S}_{-\varepsilon}\backslash{S}_{-\varepsilon/2},T)}
+\|W\|_{\LL^2(\Ga_{b_1},T)}
+\|w\|_{\WW^{-1,2}(\Ga_{b_1})}\Big),
\label{varphiy}
\eeq
where $U=\sym DW+w\Pi$. Define the subdomain
$$S_{-\varepsilon/2}^c
=S\backslash S_{-\varepsilon/2}
=\{\,\a(t,s)\in S\,|\,(t,s)\in(0,a)\times(-b_0,-\varepsilon/2]\,\}.$$ By (\ref{Pi2.801}), $S_{-\varepsilon/2}^c$ is a  non-characteristic region. By \cite[Theorem 1.1]{Yao2019} there exists a constant $C>0,$ independent of $y,$ such that
\be\|W\|_{\LL^2(S_{-\varepsilon/2}^c,T)}\leq C(\|U\|_{\LL^2(S_{-\varepsilon/2}^c,T^2)}+\|W\|_{\LL^2(\Ga_{-b_0},T)}).\label{3.52}\ee 
From this, we derive
\beq
\|w\|_{[\WW^{1,2}(S_{-\varepsilon/2}^c)]'}
&&\leq C(\|U\|_{[\WW^{1,2}(S_{-\varepsilon/2}^c,T^2)]'}+\|\sym DW\|_{[\WW^{1,2}(S_{-\varepsilon/2}^c,T^2)]'})\nonumber\\
&&\leq  C(\|U\|_{\LL^2(S_{-\varepsilon/2}^c,T^2)}+\|W\|_{\LL^2(S_{-\varepsilon/2}^c,T)}
+\|W\|_{\LL^2(\Ga_{-b_0}\cup\Ga_{-\varepsilon/2},T)})\nonumber\\
&&\leq C(\|U\|_{\LL^2(S,T^2)}
+\|W\|_{\LL^2(\Ga_{-b_0}\cup\Ga_{-\varepsilon/2},T)}
),\label{3.53}\eeq 
where the assumption
$\Pi(\a_t,\a_t)>0\qflq x\in\overline{S_{-\varepsilon/2}^c}$ is invoked. Moreover, by \cite[Lemma 3.6]{Yao2019},
\be\|W\|_{\LL^2(\Ga_{-\varepsilon/2},T)}\leq C(\|U\|_{\LL^2(S_{-\varepsilon/2}^c,T^2)}+\|W\|_{\LL^2(\Ga_{-b_0},T)}).\label{3.54}\ee
Combining \eqref{varphiy}--\eqref{3.54} yields the desired inequality \eqref{1.3} by density. \hfill$\Box$\\

{\bf Ethical approval}

This article does not contain any studies involving human participants or animals conducted by the author.

{\bf Declaration of competing interests}

The author declares that there is no known conflict of interest.

 \end{document}